\newcommand{\cut}{\textup{cut}}
\DeclareMathOperator{\Abn}{Abn}
\DeclareMathOperator{\Exp}{Exp}
\DeclareMathOperator{\supp}{supp}
\DeclareMathOperator{\Cut}{Cut}
\DeclareMathOperator{\Conj}{Conj}
\renewcommand{\Im}{\operatorname{Im}}
\renewcommand{\phi}{\varphi}
\DeclareMathOperator{\length}{length}
\newcommand{\step}[1]{\par\medskip\noindent\it#1\rm}
\newcommand{\Scalar}[2]{\left\langle#1,#2\right\rangle}
\newcommand{\scalar}[2]{\langle#1,#2\rangle}
\newcommand{\norm}[1]{\left\Vert#1\right\Vert}
\DeclareMathOperator{\sinc}{sinc}
\newcommand{\abs}[1]{\lvert#1\rvert}
\newcommand{\z}{\zeta}
\renewcommand{\gg}{\mathfrak{g}}
\newcommand{\g}{\gamma}
\newcommand{\wh}{\widehat}
\renewcommand{\H}{\mathbb{H}}
\newcommand{\E}{\mathcal{E}}
\newcommand{\e}{\varepsilon}
\renewcommand{\r}{\rho}
\newcommand{\s}{\sigma}
\newcommand{\la}{\lambda}
\renewcommand{\cal}[1]{\mathcal{#1}}
\newcommand{\wt}{\widetilde}
\newcommand{\ol}{\overline}
\renewcommand{\d}{\delta}
\newcommand{\p}{\partial}
\DeclareMathOperator{\diag}{diag}
\newtheoremstyle{pippo}  % name of the style to be used
  {}       % measure of space to leave above the theorem. E.g.: 3pt
  {}       % measure of space to leave below the theorem. E.g.: 3pt
   {\sffamily}   % name of font to use in the body of the theorem
\newtheoremstyle{pluto}  {}{}
{\slshape}  {}{\bfseries}  {.} {1ex}    {}
\newtheorem{theorem}{Theorem}[section]
\newtheorem{proposition}[theorem]{Proposition}
\newtheorem{lemma}[theorem]{Lemma}
\newtheorem{corollary}[theorem]{Corollary}
\theoremstyle{pluto}
\newtheorem{definition}[theorem]{Definition}
\newtheorem{remark}[theorem]{Remark}
\newcommand{\R}{\mathbb{R}}
\newcommand{\G}{\mathbb{G}}
\newcommand{\F}{\mathbb{F}}
\newcommand{\N}{\mathbb{N}}
\newcommand{\C}{\mathbb{C}}
\newcommand{\V}{\mathbb{V}}
\renewcommand{\d}{\delta}
\renewcommand{\t}{\tau}
\renewcommand{\a}{\alpha}
\renewcommand{\b}{\beta}
\DeclareMathOperator{\Span}{span}
\numberwithin{equation}{section}
\numberwithin{figure}{section}
\let\oldbibliography\thebibliography
\renewcommand{\thebibliography}[1]{%
  \oldbibliography{#1}%
  \setlength{\itemsep}{0pt}%
}
\titleformat{\section}{%
\normalfont\large\bfseries}{\thesection.}{1em}{}
\titleformat{\subsection}{%
\normalfont\normalsize\bfseries}{\thesubsection.}{1em}{}
\begin{document}

\title{On the subRiemannian cut locus  in a model 
\\ of free  two-step  Carnot group    
 \thanks{2010 Mathematics Subject Classification. Primary 53C17;
Secondary  49J15.
Key words and Phrases.    Carnot groups, Cut locus,  SubRiemannian geodesic. }}
\author{Annamaria Montanari \and Daniele Morbidelli}

\date{}

\maketitle

 \tableofcontents

 \begin{abstract}
   We characterize the subRiemannian cut locus of the origin in the
free Carnot group of step two with three generators,   giving a new, independent proof of a result by Myasnichenko \cite{Myasnichenko02}.  
We also calculate explicitly the
cut time of any extremal path and the distance from the origin of all points of the
cut locus.   Furthermore,   by using the Hamiltonian approach, we show that the cut time of
strictly normal extremal paths is a smooth explicit function of the initial
  velocity   covector.   Finally, using our previous results, we show that at any cut point the distance has a corner-like singularity.  

 \end{abstract}

% \begin{abstract}
%    We consider a family $\H:= \{X_j = f_j\cdot\nabla: j=1,\dots, m\}$ of $C^1$ vector 
% fields in
% $\R^n$ and let $s\in\N$. We assume that for all $p\in\{1.\dots, s\}$ and $j_1, \dots, j_p\in \{1, \dots, m\}$
% the \emph{horizontal derivatives} $X_{j_1}X_{x_2}\cdots X_{j_{p-1}}f_{j_p}$
% exist and are Lipschitz continuous with respect to the control distance defined by~$\H$.
% Then we show that different notions of commutators agree.
% This involves an accurate analysis of some algebraic identities involving nested commutators which seem to have an independent interest. 
% 

% We apply these  results to  discuss the regularity properties of a class of \emph{almost exponential maps} associated with the system of vector fields.
% Ultimately, we get the proof of the doubling property and the Poincar\'e inequality for H\"ormander vector fields under an intrinsic ``horizontal regularity'' assumption on their coefficients.
% \end{abstract}
% 

%\newpage

% 
% \begin{figure} [ht]
%  \label{figura1}
% \centerline{\includegraphics{palladue.1} } \caption{ The sphere of
% radius $r$.}
% \end{figure}
% \newpage
% 
% 
% \begin{figure} [ht]
%  \label{figura2}
% \centerline{\includegraphics{sandro.1} } \caption{ Funzione $R$.}
% \end{figure}
%% \begin{figure} [ht]
%  \label{figura2}
%  \centerline{\includegraphics{sandro.1} } \caption{ Funzione $R$.}
% \end{figure}

% \begin{figure} [ht]
%  \label{figura2}
% \centerline{\includegraphics{funzione_P.1} } \caption{ Funzione $R$.}
% \end{figure}

\section{Introduction and statement of the results}

One of the most interesting aspects of subRiemannian analysis is the study of the cut locus of a given 
 distance.
It is well known that  a correct understanding of the cut locus is crucial in problems concerning subRiemannian 
optimal transport (see \cite{AmbrosioRigot04,AgrachevLee09,FigalliRifford10} and analysis of the subelliptic heat kernel (see \cite{BarilariBoscainNeel12}).

 In this paper, we introduce a very explicit technique which provides the calculation of the subRiemannian cut locus 
of the origin in the free Carnot group of step two with three generators. 
We are also able to explicitly calculate  the cut time of any extremal path and  the distance from the origin of all points of the cut locus.  Furthermore, we show the presence of Lipschitz singularities at any point of the cut locus.

% \color{blue} Our calculations are completely different and independent from Myasnichenko results
% 
% In this paper we exhibit a  very   explicit description of the subRiemannian cut locus 
% of the origin in the free Carnot group of step two with three generators. \color{blue} Our calculations are completely different and independent from Myasnichenko results
% We also calculate explicitly the cut time of any extremal path and  the distance from the origin of all points of the cut locus. \color{blue} Finally, we show the presence of Lipschitz singularities at any point of the cut locus. \color{black}

 To state our results, let us introduce the free Carnot group of step two with three generators.     Let  
  $\V$ be a three dimensional vector space equipped with an inner product~ $\scalar{\cdot}{\cdot}$.    Consider the 
  six-dimensional linear space 
 $ \F_3 :=\V\times \wedge^2 \V$ with the Lie group law
 \begin{equation}\label{liberone}
(x,t)\cdot(\xi,\tau)=\Bigl(x+\xi, t+\tau+\frac 12 x\wedge  \xi\Bigr),
\end{equation} 
where $(x,t)$ and $(\xi,\tau)\in \V\times \wedge^2 \V$.
Let $d$ be the left invariant subRiemannian distance   defined on the Lie group $(\F_3,\cdot)$ by the data $(\V,\scalar{\cdot}{\cdot})$  
and 
denote by $\Cut_0$ the cut locus of the origin  (see Section~\ref{preliminare} for detailed explanations).

The first result of this  paper can be stated as follows:
\begin{theorem}\label{teo1.1}
 Let $\F_3$ be the free two step Carnot group   with three generators.  Then 
 \begin{equation}\label{unodue} 
  \Cut_0  =\big\{ (x, t): t\neq 0\in\wedge^2 \V\text{ and } x \perp \supp t \big\}.
 \end{equation} 
\end{theorem}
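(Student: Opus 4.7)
The plan is to prove the two inclusions separately, combining a symmetry argument for the easy direction with an analysis of the normal exponential map for the hard direction.

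\textbf{Easy direction.} The orthogonal group $O(\V)$ acts on $\F_3$ by $R\cdot(x,t)=(Rx,(R\wedge R)t)$; this action preserves the group law \eqref{liberone} and the inner product on $\V$, hence the subRiemannian distance $d$, while fixing the origin. Given $(x,t)$ with $t\neq 0$ and $x\perp\supp t$, the vector $x$ lies along the axis $(\supp t)^\perp$, and the subgroup $SO(2)\subset O(\V)$ of rotations about this axis stabilizes $(x,t)$. For any minimizer $\gamma$ from the origin to $(x,t)$ and any $R\in SO(2)$, the rotated curve $R\gamma$ is also a minimizer to the same endpoint, and these are distinct unless $\gamma$ is itself $SO(2)$-invariant. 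Invariance would force $\dot x(s)$ to be collinear with the axis for all $s$, giving $\dot t(s)=\tfrac12 x(s)\wedge\dot x(s)\equiv 0$ and hence $t=0$, contrary to assumption. Thus $(x,t)$ admits a continuous family of distinct minimizers and belongs to $\Cut_0$.

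\textbf{Hard direction.} For $t=0$ the horizontal straight line $s\mapsto(sx,0)$ is the unique minimizer, so such points are not cut points. For $t\neq 0$ and $x\not\perp\supp t$ I would argue through the exponential map. Identifying $\wedge^2\V\cong\V$ via the Hodge star, writing $\omega=*\tau$ and $J_\tau v=\omega\times v$, the normal extremal with initial covector $(p,\tau)$ satisfies $\dot x(s)=\espo^{-sJ_\tau}p$ (so $p$ rotates around $\omega$) and $\dot t(s)=\tfrac12 x(s)\wedge\dot x(s)$. Using the $O(\V)$-symmetry I would normalize $\omega=\omega_0 e_3$ and $p=a e_1+c e_3$ with $a,\omega_0\geq 0$ and $c\in\R$, yielding closed-form trigonometric expressions for $(x(s),t(s))$. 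Abnormal extremals from the origin are straight horizontal lines (since $\dim\ker J_\tau=1$ when $\tau\neq 0$), ending at points with $t=0$, hence they contribute no cut points where $t\neq 0$.

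The technical core is to define, for each strictly normal extremal, the cut time $s^*(p,\tau)$ as the smallest positive time at which $x(s)\perp\supp t(s)$, and to show that it is smooth in $(p,\tau)$, with $\gamma_{p,\tau}|_{[0,s^*]}$ length-minimizing while $\gamma_{p,\tau}|_{[0,s]}$ is not minimizing for $s>s^*$. In the normalized coordinates the condition $x(s)\parallel *t(s)$ reduces, after algebraic cancellation (one of the three component equations vanishes identically), to a single transcendental equation in $\theta=\omega_0 s$ with parameters $a,c$; its smallest positive root yields $s^*$ explicitly. One then verifies that the exponential map, restricted to $\{(p,\tau):s^*(p,\tau)>1\}$, is injective with non-degenerate Jacobian and hence a diffeomorphism onto $\F_3\setminus(\Cut_0\cup\{0\})$, giving uniqueness of the minimizer at every point outside the claimed cut locus.

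The main obstacle is the quantitative analysis of the cut-time equation and of the Jacobian of the exponential map. Although the trigonometric formulas involved are elementary, showing that $s^*$ is well-defined and smooth and captures precisely the first failure of minimality requires careful case analysis in the parameters $(a,c,\omega_0)$, as does the injectivity estimate needed for the diffeomorphism statement.
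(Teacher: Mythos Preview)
Your easy direction via the $SO(2)$-symmetry is correct and cleaner than the paper's route, which explicitly parametrizes all extremals to a given point of $\Sigma$ (Theorem~\ref{unmaintheorem}) and then checks by direct computation that their final velocities $u_\sigma(1)$ differ (Proposition~\ref{itemcinque}). One step you should make explicit: passing from ``a continuous family of distinct minimizers'' to ``$(x,t)\in\Cut_0$'' uses that constant-speed minimizers in step-two groups are real-analytic normal extremals, so a minimizing concatenation that agrees with $\gamma$ on $[1,1+\varepsilon]$ must equal $\gamma$ on all of $[0,1+\varepsilon]$, forcing $R\gamma=\gamma$ and the desired contradiction. Without this, the case where $u(1)$ happens to lie on the rotation axis (so that $Ru(1)=u(1)$ for every $R$) is not ruled out by your argument as written.

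The hard direction is where the proposal has a genuine gap. You correctly observe that it would suffice to show that the exponential map is injective with nonsingular Jacobian on $\{(p,\tau):s^*(p,\tau)>1\}$, but these two assertions are precisely the substance of the theorem, and you give no mechanism for establishing them beyond ``one then verifies''. This is essentially Myasnichenko's conjugate-locus approach, which the paper deliberately bypasses. The paper instead inverts the endpoint map \emph{on} $\Sigma$: for each $(x,t)\in\Sigma$ it solves $G(\alpha',\beta',\zeta,\theta)=(x,t)$ completely (Theorem~\ref{unmaintheorem}), obtains a closed form for the length of every such extremal, singles out the minimizing $\theta\in[\pi,\phi_1[$ via the monotonicity of $R(\theta)=(1-S^2)/\sqrt{UW}$ (Lemma~\ref{erredue}), and from item~\ref{itemtre} of Theorem~\ref{unmaintheorem} reads off the cut time $t_{\cut}=\phi^{-1}Q^{-1}(|z|^2/|a|^2)$ of \emph{every} extremal (Theorem~\ref{trentadue}). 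A two-line orthogonality check then gives $\gamma(t_{\cut})\in\Sigma$, hence $\Cut_0\subset\Sigma$. This route never computes a Jacobian and never argues global injectivity of $\Exp$, yet it also delivers the explicit cut time and the distance formula on $\Sigma$ as byproducts---neither of which your outline would produce without additional work.
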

Here, if $y,y'\in\V$, given the bivector 
$t=y\wedge y'\in\wedge^2 \V$, we denote by $\supp(t)=\Span \{y,y'\}$ its support (see the discussion in Section~\ref{pensaci}). 
  The set $\Cut_0$ is a smooth four dimensional submanifold of $\F_3$ (see Remark \ref{quattrocchi}). 
 The cut locus of any point $(x,t)\neq (0,0)$ can be easily obtained from $\Cut_{0}$ by  group translation.

\begin{remark}Some comments on this theorem are in order.  
\begin{enumerate}[nosep]
\item   The cut locus $\Cut_0$
has been already described by Myasnichenko \cite{Myasnichenko02} with a technique based on the analysis of conjugate points. Our technique is completely independent from the analysis of the conjugate locus and it allows us to get several more information on the subRiemannian distance (see the theorems below).  
 \item 
In \cite[Lemma~2.11]{RiffordTrelat09}, Rifford and Tr\'elat prove that $\Cut_0\cup\{0\}$ 
is closed in absence of  abnormal minimizers. 
In our model, which contains abnormal minimizers, it turns out that 
 $\Cut_0\cup\{0\}$  is not closed. Indeed, points of 
 the form 
 $(x,0)\in \V\times\wedge^2 \V$ (which belong 
 to the abnormal set, see \cite{LeDonneMontgomeryOttazziPansuVittone}) are never cut points,
 but 
 they belong to the closure of $\Cut_0$.

% \item  In Carnot groups of step 2 and corank 1  (see \cite{AgrachevBarilariBoscain12}) \color{black} 
%  and in Heisenberg-type groups
%    (see \cite{AutenriedMolina16}),  \color{black}
%  $\Cut_0$ is  contained in the center of the group. Here, as in the corank two cases discussed in 
%  \cite{BarilariBoscainGauthier12}, $\Cut_0$ is not contained in the center.

 \item If we think of the Heisenberg group  as the free two step   Carnot  group with two generators $\F_{2,2}$ 
 (i.e.~$\operatorname{dim}V=2$), 
 then
 formula  \eqref{unodue} reduces as expected to the known fact $\Cut_0=\{(0,t)\in \V\times\wedge^2 \V\sim 
 \V\times \R\}$. Formula \eqref{unodue} is dimension free and one could conjecture that such
 structure of the cut locus holds in any dimension. We plan to come back to these generalizations in a 
 subsequent project.\footnote{  After the submission of this paper, a precise conjecture for the cut locus on the free group in higher dimension has been formulated by Rizzi and Serres \cite{RizziSerres16}.} 

 \end{enumerate}
 \end{remark}

 Fine properties of  subRiemannian length-minimizing curves and of the related distance in Carnot groups of step two are studied by 
 \cite{AgrachevBarilariBoscain12} and  \cite{BarilariBoscainGauthier12} in the corank one and two. The case of h-type groups is discussed in  \cite{AutenriedMolina16}.
%  \cite{ButtSachkovBhatti}. 
 Analysis in step three examples has been performed in \cite{AgrachevBonnard97}, for the Martinet case, and in  
 \cite{ArdentovSachkov11,ArdentovSachkov14} for the Engel group. 
We also
 mention the very recent paper    \cite{BarilariBoscainNeel16}, where a detailed discussion of the cut locus in the biHeisenberg group is performed.   
 Outside of the setting of Carnot groups, we mention
 \cite{Sachkov11} and  \cite{PodobryaevSachkov15}.      Finally, we quote the results in 
\cite{ArcozziFerrari07} on the cut locus from surfaces in the Heisenberg group.

Our further results involve a calculation of the cut time of any extremal curve and  an explicit computation of the distance from the origin of points $(x,t)\in \Cut_0$. 
To describe such result, we recall that,   up to a reparametrization,   length-minimizers for the subRiemannian length have the form 
$s\mapsto \gamma(s)=(x(s),t(s))$, where $\dot x(s)=:u(s)$ has the form
\begin{equation}\label{UUU}
u(s)=a\cos(2\phi s)+b\sin(2\phi s) +z
\end{equation}
  and   $\dot t(s) =\frac 12 x(s)\wedge u(s).  $        Here   the vectors 
  $a,b,z\in\R^3 $   must be pairwise perpendicular,  $\abs{a}=\abs{b}>0$ and  $\phi $ can be assumed to be nonnegative.   The vector $z$ can possibly vanish (this corresponds to a   curve  which lives in a ``Heisenberg subgroup'').   See the discussion in Section \ref{jjmm} for a precise description. We will  calculate the cut time  of any such extremal curve in terms of the following explicit functions:
\begin{equation}\label{uvuesse} 
\begin{aligned}
S(\theta)
% &:=s_\phi
:=\frac{\sin\theta}{\theta},\qquad
% \\
U(\theta)
% & :=u_\theta
:=\frac{\theta-\sin\theta\cos\theta}{4\theta^2},\qquad
% \\
V(\theta) 
% &:=V_\theta
:=\frac{\sin\theta-\theta\cos\theta}{2\theta^2}.
\end{aligned}
\end{equation} 
  More precisely, letting 
\begin{equation}\label{RRR}
Q(\theta)=-\frac{U(\theta)S(\theta)}{V(\theta)},
\end{equation}
  and denoting by   $\phi_1\in \mathopen]\pi, \frac 32\pi\mathclose[$  the first positive zero of the function $V$ in \eqref{uvuesse},   we will show that the function $Q$ is a strictly increasing bijection from 
 $[\pi,\phi_1\mathclose[$ to  $[Q(\pi),Q(\phi_1-)\mathclose[=[ 0,+\infty\mathclose[$ (see Lemma \ref{quoziente} and the plot in figure \ref{figura2}).   Then we will prove the following theorem. 
\begin{theorem}\label{teo1.3}
  Let $a,b,z\in \V$ be orthogonal vectors with $\abs{a}=\abs{b}>0$ and let $\phi> 0$. Consider the control 
\[
 u(s)=a\cos(2\phi s)+b\sin(2\phi s)+ z
\]
and let 
 $s\mapsto \gamma(s,a,b,z, 2\phi )$ be the corresponding curve. Then 
 $\gamma(\cdot ,a,b,z, 2\phi )$ 
 minimizes length up to the time 
\begin{equation*}
t_\cut (a,b,z,\phi)=
\frac{Q^{-1} (|z|^2/|a|^2)} {\phi}.
\end{equation*}
\end{theorem}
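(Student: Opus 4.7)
The plan is to combine Theorem~\ref{teo1.1} with a direct computation of the endpoint $\g(T)=(x(T),t(T))$. By Theorem~\ref{teo1.1}, $\g$ minimizes length up to the first time $T>0$ at which $\g(T)\in\Cut_0$, i.e., $t(T)\neq 0$ and $x(T)\perp \supp t(T)$. Since $\dim\V=3$, every bivector is decomposable and $\supp t(T)$ is the $2$-plane orthogonal to the Hodge dual $*t(T)\in\V$, so the perpendicularity condition is equivalent to the vector collinearity $x(T)\w *t(T)=0$.

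Next I would set up the orthonormal frame $e_1=a/\abs{a}$, $e_2=b/\abs{a}$, $e_3=z/\abs{z}$ (assuming $z\neq 0$; the degenerate case $z=0$ is a ``Heisenberg subgroup'' extremal, for which $Q(\pi)=0$ gives $t_\cut=\pi/\phi$ immediately). Setting $\theta=\phi T$, a direct integration yields
\begin{equation*}
x(T)=\frac{\abs{a}\sin\theta}{\phi}\bigl(\cos\theta\,e_1+\sin\theta\,e_2\bigr)+\abs{z}\,T\,e_3,
\end{equation*}
while a slightly longer computation of $t(T)=\tfrac12\int_0^T x\w u\,ds$ produces
\begin{equation*}
t(T)=\abs{a}^2 T^2\,U(\theta)\,e_1\w e_2+\abs{a}\abs{z}\phi T^3 S(\theta)V(\theta)\,e_1\w e_3-\abs{a}\abs{z}T^2\cos\theta\,V(\theta)\,e_2\w e_3,
\end{equation*}
once the identities $\sin\theta=\theta S(\theta)$, $\theta-\sin\theta\cos\theta=4\theta^2 U(\theta)$, and $\sin\theta-\theta\cos\theta=2\theta^2 V(\theta)$ are applied.

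Then I would impose $x(T)\w *t(T)=0$ component by component in the basis $\{e_i\w e_j\}$. Of the three resulting scalar equations, one turns out to be an identity (a cancellation already built in by $\sin\theta=\theta S(\theta)$), and the other two, after factoring common nonzero powers of $T$ and $\theta S(\theta)$, both reduce to the single relation
\begin{equation*}
\abs{a}^2 S(\theta)U(\theta)+\abs{z}^2 V(\theta)=0,
\end{equation*}
equivalently $Q(\theta)=\abs{z}^2/\abs{a}^2$ whenever $V(\theta)\neq 0$.

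Finally I would pick out the first positive root. On $(0,\pi)$ each of $S$, $U$, $V$ is strictly positive, so no solution exists there; by Lemma~\ref{quoziente}, $Q$ is a strictly increasing bijection $[\pi,\phi_1)\to[0,+\infty)$, so the smallest positive $\theta$ solving the cut equation is $\theta^*=Q^{-1}(\abs{z}^2/\abs{a}^2)\in[\pi,\phi_1)$, giving $t_\cut=\theta^*/\phi$. A quick check that $t(T^*)\neq 0$ (so that $\g(T^*)$ genuinely lies in $\Cut_0$ and not on the abnormal stratum $\{t=0\}$) closes the argument. The main computational obstacle is the algebraic collapse of the three components of $x(T)\w *t(T)$ into one clean scalar equation; the trigonometric definitions~\eqref{uvuesse} of $S$, $U$, $V$ are tuned precisely to make this collapse transparent.
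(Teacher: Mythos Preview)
Your endpoint computation is correct—the three scalar conditions do collapse to $|a|^2 S(\theta)U(\theta)+|z|^2 V(\theta)=0$, with smallest positive root $\theta^*=Q^{-1}(|z|^2/|a|^2)$—but the appeal to Theorem~\ref{teo1.1} is circular within this paper: the inclusion $\Cut_0\subset\Sigma$ is deduced from formula~\eqref{tutte} in Theorem~\ref{trentadue}, which is precisely the restatement of Theorem~\ref{teo1.3}. If you import Myasnichenko's independent result as a black box your approach becomes valid, but then the paper's self-contained derivation is lost. Even granting $\Cut_0=\Sigma$, the sentence ``$\gamma$ minimizes up to the first time it hits $\Cut_0$'' needs justification: the half $t_\cut\ge T^*$ holds because $\gamma(t_\cut)\in\Cut_0\subset\Sigma$ forces $t_\cut$ to solve your scalar equation, while $t_\cut\le T^*$ uses the nonuniqueness of minimizers to points of $\Sigma$ (Proposition~\ref{itemcinque}, Remark~\ref{multipli}) to rule out minimizing past $T^*$.

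The paper runs the logic in the opposite order, with no forward reference to $\Cut_0\subset\Sigma$. Section~\ref{sec3} classifies all extremals landing on $\Sigma$ (Theorem~\ref{unmaintheorem}), picks out the minimizing ones (Lemmas~\ref{erredue} and~\ref{piopio}), and proves $\Sigma\subset\Cut_0$. Then, for each $\theta\in[\pi,\phi_1)$, the minimizing extremal $\gamma_\sigma$ built there has cut time exactly $1$; item~\ref{itemtre} of Theorem~\ref{unmaintheorem} together with Proposition~\ref{ticutto} give $h_\cut(\sqrt{Q(\theta)})=\theta$, and inverting yields Theorem~\ref{teo1.3}. Only afterwards is your endpoint computation carried out, as~\eqref{tutte}, to close the loop and conclude $\Cut_0\subset\Sigma$. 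So the computation you sketch is in the paper too, but as the final step rather than the starting point.
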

Observe that, if we choose $z=0$ we get $t_\cut (a,b,0,\phi)=\frac{\pi}{\phi}$, which is the familiar case of the Heisenberg group. 
 However, it is interesting to remark that the   cut time displays  the following ``discontinuous'' behavior.  If $a,b,z,\phi$ are fixed with  $a,b,z\neq 0$ pairwise orthogonal, $\abs{a}=\abs{b}$ and $\phi>0$, it turns out that  
\[
t_\cut (\e a, \e b,z, \phi) =\frac{1}{\phi}Q^{-1}\Bigl(\frac{|z|^2}{\e^2|a|^2}\Bigr)
\to
  \frac{Q^{-1}(+\infty)}{\phi}=
 \frac{\phi_1}{\phi},
\]  
as $\e\to 0^+$. However, the cut time of the limit curve $\gamma(s)=(sz, 0)$ is $+\infty$.
Note that such limit curve is abnormal (see Section \ref{preliminare}). 
  Using the Hamiltonian  approach,  we will express the cut time as an explicit function  of the initial velocity covector (see Section~\ref{sec5}). 
% A couple of related questions concerning the cut time in higher dimensional free group $\F_m$ with $m\geq 4$ are:\footnote{non lo metterei questo}
% \begin{enumerate}[nosep,label=(Q\arabic*)]
% \item  Is it true or not that $t_\cut <\infty$ for all strictly normal extremal curve in $\F_m$?
% 
% \item   Are there abnormal minimizers in $\F_m$  such that $t_\cut<\infty$? 
% \end{enumerate}

Our   further  result involves the explicit computation of the distance from the origin  and an arbitrary point of the cut locus.
To state our result we introduce the real valued functions 
\begin{equation*}
   P(\theta): =  -\frac{S(\theta) }{V(\theta)}\sqrt{\frac{W(\theta)}{U(\theta)}}\quad\text{and}\quad
   R(\theta):=\frac{1-S(\theta)^2}{\sqrt{U(\theta)W(\theta)}},
\end{equation*} 
where
 $W(\theta):=U(\theta)-S(\theta)V(\theta)$.
The function $R$ is well defined and positive for $\theta>0$, 
because $W(\theta)>0$ for all $\theta>0$  (see Lemma~\ref{vudop}).  Furthermore, we will show that if $\phi_1\in\mathopen]\pi,\frac 32\pi\mathclose[$ denotes 
the first positive zero of the function $V$ in \eqref{uvuesse}, then   $P:\mathopen[\pi,\phi_1\mathclose[\to \mathopen[
0,+\infty\mathclose[$ is an increasing bijection. We denote by $P^{-1}$ the inverse function of $P\bigr|_{\mathopen[\pi,\phi_1\mathclose[}$.  
Then we will prove the following formula:
\begin{theorem}\label{secondoteorema} 
 Let $(x,t)\in \Cut_0$. Then 
 \begin{equation}\label{distanziatore} 
  d\bigl((0,0), (x,t)\bigr)^2
  =\abs{x}^2 + 
%   \frac{1-s(\theta)}{\sqrt{u(\theta)^2-
%   s(\theta)V(\theta)u(\theta)}}
R(\theta)
\abs{t},\quad\text{ where }
  \theta=P^{-1}\Bigl(\frac{\abs{x}^2}{\abs{t}}\Bigr)\in\mathopen[\pi,\phi_1\mathclose[.
 \end{equation} 
\end{theorem}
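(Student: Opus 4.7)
The plan is to choose, for each $(x,t)\in\Cut_0$, a length-minimizing extremal $\gamma(\cdot,a,b,z,2\phi)$ as in \eqref{UUU} ending at $(x,t)$ at its cut time $T=t_\cut(a,b,z,\phi)$, and then express $d^2$, $|x|^2$, and $|t|$ as explicit functions of $|a|$, $|z|$, and the parameter $\theta:=\phi T$. By Theorems~\ref{teo1.1} and~\ref{teo1.3}, such an extremal exists with $a,b,z$ pairwise orthogonal, $|a|=|b|>0$, and $\theta\in[\pi,\phi_1[$ satisfying the cut-time relation
\[
\frac{|z|^2}{|a|^2}=Q(\theta)=-\frac{S(\theta)U(\theta)}{V(\theta)}.
\]
This identity is the key constraint that will make the final formula collapse.

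Since $a,b,z$ are orthogonal and $|a|=|b|$, the control norm $|u(s)|^2=|a|^2+|z|^2$ is constant, so $d((0,0),(x,t))^2=T^2(|a|^2+|z|^2)$. Integration of \eqref{UUU} gives $x(T)=T\bigl(S(\theta)\cos\theta\,a+S(\theta)\sin\theta\,b+z\bigr)$, hence $|x|^2=T^2\bigl(S(\theta)^2|a|^2+|z|^2\bigr)$. Expanding $t(T)=\tfrac12\int_0^T x\wedge u\,ds$ in the orthogonal basis $\{a\wedge b,\,a\wedge z,\,b\wedge z\}$, elementary trigonometric integrals produce the three coefficients $T^2U(\theta)$, $T^2\theta S(\theta)V(\theta)$, $-T^2V(\theta)\cos\theta$. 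Taking the squared norm, the identity $\theta^2 S(\theta)^2+\cos^2\theta=\sin^2\theta+\cos^2\theta=1$ eliminates the dependence on $\cos\theta$, giving $|t|^2=T^4|a|^2\bigl(U(\theta)^2|a|^2+V(\theta)^2|z|^2\bigr)$, and the cut-time relation substitutes this into the clean form
\[
|t|=T^2|a|^2\sqrt{U(\theta)W(\theta)},\qquad W=U-SV.
\]

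From these three identities the theorem follows in two lines. First,
\[
\frac{|x|^2}{|t|}=\frac{S(\theta)^2-S(\theta)U(\theta)/V(\theta)}{\sqrt{U(\theta)W(\theta)}}=-\frac{S(\theta)}{V(\theta)}\sqrt{\frac{W(\theta)}{U(\theta)}}=P(\theta),
\]
identifying $\theta=P^{-1}(|x|^2/|t|)$, granted the monotonicity and surjectivity of $P:[\pi,\phi_1[\to[0,+\infty[$, which is a separate trigonometric check. Second,
\[
d^2-|x|^2=T^2|a|^2\bigl(1-S(\theta)^2\bigr)=\frac{1-S(\theta)^2}{\sqrt{U(\theta)W(\theta)}}\cdot T^2|a|^2\sqrt{U(\theta)W(\theta)}=R(\theta)\,|t|,
\]
which is \eqref{distanziatore}. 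The main obstacle is the bookkeeping in the computation of $t(T)$: one must compute three trigonometric integrals and recognize that, thanks to $\sin^2\theta+\cos^2\theta=1$ together with the cut-time constraint $|z|^2/|a|^2=-SU/V$, the resulting expression collapses to the single product $U(\theta)W(\theta)$. Once this identity is in place, the rest is one line of algebra.
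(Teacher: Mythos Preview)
Your proof is correct, but it follows a different route from the paper's. The paper proves Theorem~\ref{secondoteorema} \emph{before} Theorems~\ref{teo1.1} and~\ref{teo1.3}: it first characterizes \emph{all} extremal paths reaching a given $(x,t)\in\Sigma$ (Theorem~\ref{unmaintheorem}), obtains the length formula $\abs{x}^2+R(\theta)\abs{t}$ for each admissible~$\theta$, and then invokes the monotonicity of $R$ on $\bigcup_k\,]k\pi,\phi_k[$ (Lemma~\ref{erredue}) to select the smallest solution $\theta\in[\pi,\phi_1[$ as the minimizing one. Your argument instead runs \emph{forward}: starting from a minimizing extremal at its cut time, you use the cut-time relation $|z|^2/|a|^2=Q(\theta)$ from Theorem~\ref{teo1.3} to collapse $|t|$ to $T^2|a|^2\sqrt{UW}$, and the rest is algebra. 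This bypasses the explicit minimization over all extremals but relies on Theorems~\ref{teo1.1} and~\ref{teo1.3} as black boxes; since those theorems are proved in the paper using Theorem~\ref{unmaintheorem} and Lemmas~\ref{erredue},~\ref{piopio} (and not Theorem~\ref{secondoteorema}), there is no circularity, only a reordering of the logical flow. The paper's route has the advantage of being self-contained at an earlier stage; yours has the advantage that, once the cut-time formula is known, the distance formula drops out without ever solving the inverse problem of finding extremals to a prescribed endpoint.
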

 We will see  that  the equation $P(\theta)=\frac{\abs{x}^2}{\abs{t}}$ has a countable family of solutions $\theta_k\in \mathopen]k\pi,\phi_k\mathclose[,$
where $\phi_k  \in\bigl]k\pi,(k+\frac 12)\pi \bigr[$ is the $k$-th positive solution of $\tan \theta=\theta.$  See the discussion after \eqref{definiscopi}. 
In Theorem \ref{secondoteorema} we prove that the minimizing choice  is the smallest one $\theta_1$.

If we specialize formula \eqref{distanziatore} to the case $x=0$,   we must take
$\theta=P^{-1}(0)=\pi$ and then $S(\theta)=0$, $U(\theta)=\frac{1}{4\pi}$ and we find 
\[
 d((0,0),(0,t))^2=4\pi\abs{t},
\]
which is the familiar  formula for the distance between the origin and a point in the center in the Heisenberg group (for a complete description of geodesics in Heisenberg group see \cite{Monti00}).  
The value $\theta=\phi_1$ is never achieved.

  Our final result concerns the regularity of the subRiemannian distance from the origin in the cut locus.   Observe that  weak regularity properties (like semiconcavity) have been discussed in \cite{CannarsaRifford,FigalliRifford10,MontanariMorbidelli15,LeDonneNicolussi15}. 
Concerning smooth regularity properties, it is well known that the subRiemannian distance from a point~$0$
cannot be smooth in any pointed neighborhood of~$0$. However,  it is known 
 that the subRiemannian distance is smooth outside of the \emph{abnormal set} and of the cut locus. 
In our model, it turns out that the abnormal set has the form $ \Abn_0=\V\times 0_{\wedge^2 \V}$ (see Section \ref{abnormalia}) and it has been proved that the distance from the origin $d$ is not differentiable at  any  abnormal point  (see \cite{MontanariMorbidelli15} for precise estimates). As a byproduct of our Theorem~\ref{unmaintheorem} it is easy to see
 that $d$ is not differentiable at any point of $\Cut_0$ (see Remark~\ref{multipli}).
% Observe also that~$\overline{\Cut_0}=\Cut_0\cup\Abn_0$. 
 In the present paper we obtain more precise estimates  using the description  of the  profile for the subRiemannian spheres
(Corollary \ref{proffilo}).    Indeed, we are able to   exhibit the presence of a kind of  two-dimensional corner-like singularities  at cut points. Our  estimates also ensure that the distance cannot be semiconvex at any point of the cut locus.   
\begin{theorem}
 \label{referee}
 Let $(\ol x, \ol t)\in \Cut_0$. Then, there is $C>0$ and a $C^1$-smooth two-dimensional submanifold $\Sigma$  containing $(\ol x, \ol t)$ such that 
 \begin{equation}
  d(x,t)\leq d(\ol x, \ol t)-C\abs{(x,t)-(\ol x, \ol t)}\quad \text{for all $(x,t)\in\Sigma$.}
 \end{equation} 
 Furthermore, for any open neighborhood $\Omega$  of $(\ol x,\ol t)$ we have the estimate
 \begin{equation}\label{dossero} 
  \inf_{(x,t), (x+h, t+k),(x-h, t-k)\in\Omega}\frac{d(x+h, t+k)+ d(x-h, t-k)-2d( x,  t)}{h^2+k^2}=-\infty.
 \end{equation} 
\end{theorem}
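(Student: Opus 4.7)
My plan is to exploit the $SO(\V)$-symmetry of $(\F_3, d)$ to produce, at every cut point, a circle of distinct minimizing geodesics, whose length functionals then provide a family of smooth upper bounds for $d$. Combining these bounds will give both the descent on $\Sigma$ and the anti-semiconvexity estimate \eqref{dossero}.

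Let $(\ol x, \ol t)\in \Cut_0$. The group $SO(\V)$ acts on $\F_3=\V\times \wedge^2\V$ by $(x,t)\mapsto(gx,g_\w t)$ (with $g_\w$ the natural extension of $g$ to $\wedge^2\V$), and this action is by sub-Riemannian isometries fixing the origin. The condition $\ol x\perp \supp \ol t$ from Theorem~\ref{teo1.1} forces the stabilizer of $(\ol x, \ol t)$ to contain a copy of $SO(2)$: rotations about the axis $\Span \ol x$ when $\ol x\neq 0$ (they fix $\ol x$ and act as $SO(2)$ on the 2-plane $\supp \ol t=\ol x^\perp$, preserving the area form $\ol t$), or rotations in $\supp \ol t$ when $\ol x=0$. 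Picking a minimizer $\g_0$ with endpoint $(\ol x, \ol t)$, its $SO(2)$-orbit yields a circle $\{\g_\a\}_{\a\in S^1}$ of distinct minimizers; by equivariance, $\dot\g_\a(t_\cut)=R_\a v_0+c$, where $\dot\g_0(t_\cut)=v_0+c$ is the decomposition with $v_0\in W:=\supp \ol t$ and $c\in \Span \ol x$ ($c=0$ if $\ol x=0$), and $R_\a$ is the rotation acting on $W$. A short computation using $2\phi\,t_\cut=2Q^{-1}(|z|^2/|a|^2)\in \mathopen[2\pi,3\pi\mathclose[$ (Theorem~\ref{teo1.3}) shows $v_0\neq 0$.

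For each $\a$, I would prolong $\g_\a$ past $t_\cut$ via the normal Hamiltonian flow of Section~\ref{sec5} and define $d_\a(q)$ as the length of the unique close-by extremal with endpoint $q$. This $d_\a$ is smooth near $(\ol x,\ol t)$ once one checks that along each $\g_\a$ the cut time strictly precedes the first conjugate time, a fact that follows from Section~\ref{sec5}. It satisfies $d\le d_\a$, $d_\a(\ol x, \ol t)=d(\ol x, \ol t)$, and the first-order expansion $d_\a((\ol x, \ol t)\cdot (w,0))=d(\ol x, \ol t)+\scalar{\dot \g_\a(t_\cut)}{w}+O(|w|^2)$ for horizontal displacements $w\in \V$. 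Now define the 2-dimensional submanifold
\begin{equation*}
\Sigma :=\{(\ol x, \ol t)\cdot (w,0):w\in W,\ |w|<\e\}=\bigl\{\bigl(\ol x+w,\,\ol t+\tfrac 12 \ol x\w w\bigr):w\in W,\ |w|<\e\bigr\}.
\end{equation*}
For each $w\in W\setminus\{0\}$ pick $\a(w)\in S^1$ with $R_{\a(w)}v_0=-|v_0|\,w/|w|$; since $c\perp w$ (as $c\in\Span \ol x$ and $w\in W\subset \ol x^\perp$), the bound $d\le d_{\a(w)}$ gives
\begin{equation*}
d\bigl((\ol x, \ol t)\cdot (w,0)\bigr)\le d(\ol x, \ol t)-|v_0|\,|w|+O(|w|^2),
\end{equation*}
and since $\bigl|(\ol x, \ol t)\cdot (w,0)-(\ol x, \ol t)\bigr|\asymp |w|$, this is the linear descent claimed for $\Sigma$.

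For \eqref{dossero}, set $(h,k):=(w,\tfrac 12 \ol x\w w)$; the antisymmetry of $\w$ and \eqref{liberone} give $(\ol x\pm h,\ol t\pm k)=(\ol x, \ol t)\cdot (\pm w,0)$, while $|h|^2+|k|^2=|w|^2\bigl(1+\tfrac 14|\ol x|^2\bigr)$. Applying the drop bound separately to $\pm w$ and summing,
\begin{equation*}
\frac{d(\ol x+h,\ol t+k)+d(\ol x-h,\ol t-k)-2d(\ol x, \ol t)}{|h|^2+|k|^2}\le \frac{-2|v_0|\,|w|+O(|w|^2)}{|w|^2(1+\tfrac 14|\ol x|^2)}\xrightarrow[|w|\to 0]{}-\infty.
\end{equation*}
The main obstacle is the Hamiltonian construction of the smooth candidates $d_\a$ — in particular, verifying absence of conjugate points along each $\g_\a$ strictly before $t_\cut$ and identifying $\nabla^H d_\a(\ol x,\ol t)$ with the covector dual to $\dot\g_\a(t_\cut)$. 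Both are available through the Hamiltonian formalism of Section~\ref{sec5}. The remaining steps — the $SO(2)$-symmetry, the choice $\a(w)$, and the first-order Taylor expansion along horizontal curves — are direct.
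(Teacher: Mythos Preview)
Your strategy—exploit the $SO(2)$-stabilizer to get a circle of minimizers, build smooth upper calibrations $d_\a$, and harvest the linear drop along $W$—is appealing, but the construction of the $d_\a$ has a genuine gap. You claim that ``the cut time strictly precedes the first conjugate time, a fact that follows from Section~\ref{sec5}''. Section~\ref{sec5} says nothing about conjugate points; more importantly, the very $SO(2)$-symmetry you use forces the opposite conclusion. The circle $\{\lambda_\a\}\subset T_0^*\F_3$ of initial covectors of your $\gamma_\a$ is one-dimensional and is collapsed by $\cal E$ to the single point $(\ol x,\ol t)$; hence the tangent to this circle lies in $\ker d\cal E(\lambda_\a)$ and every $\lambda_\a$ is a critical point of the exponential. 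So $(\ol x,\ol t)$ \emph{is} conjugate along each $\gamma_\a$ (as is already visible in the Heisenberg-type case $\ol x=0$), and one cannot invert the exponential near $\lambda_\a$ to define a smooth $d_\a$ as you propose. The argument is salvageable if you replace this step by local semiconcavity of $d$ near $(\ol x,\ol t)$: since $\Cut_0\cap\Abn_0=\varnothing$ and $\Abn_0$ is closed, the Cannarsa--Rifford/Figalli--Rifford result applies in a neighborhood and gives, for each $\a$, a quadratic upper support for $d$ at $(\ol x,\ol t)$ with supergradient the final covector of $\gamma_\a$. That yields exactly the inequality you need (with a uniform quadratic remainder), and your choice of $\a(w)$ and the computation for \eqref{dossero} then go through. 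You should also justify $v_0\neq 0$ more carefully than by the range of $2\phi\,t_\cut$; the computation in the proof of Proposition~\ref{itemcinque} (the $A\cos\sigma+B\sin\sigma$ decomposition of $u_\sigma(1)$, with $|A|=|B|>0$) is what one actually needs.

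By contrast, the paper's proof avoids both conjugate-point analysis and any appeal to semiconcavity. It deforms the extremal parameters explicitly, setting $(x(\s),t(\s))=G\bigl((1-c_1\s)\a,(1-c_1\s)\b,(1-c_2\s)\z,\phi-\s\bigr)$, chooses the constants $c_1,c_2$ so that $x'(0),t'(0)\perp \ol x$, and bounds $d(x(\s),t(\s))$ above simply by the length $\sqrt{(1-c_1\s)^2|\a|^2+(1-c_2\s)^2|\z|^2}$ of the corresponding (not necessarily minimizing) extremal. The resulting two-dimensional $\Sigma$ is the $SO(2)$-orbit of this curve; for \eqref{dossero} the paper uses the $180^\circ$ rotation together with the explicit distance formula~\eqref{distanziatore} on $\Cut_0$ to control the midpoint. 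Your surface $\{(\ol x,\ol t)\cdot(w,0):w\in W\}$ is different from the paper's (your $t$-displacement is $\tfrac12\ol x\wedge w$, orthogonal to $w$, whereas the paper's $t'(0)$ is parallel to $x'(0)$), and your argument for \eqref{dossero} is centered at $(\ol x,\ol t)$ itself rather than at a nearby midpoint—both are fine once the support-function step is repaired.
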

Estimate \eqref{dossero} gives a  positive answer in our model to 
a question raised by Figalli and Rifford (see the \emph{Open problem} at p~145-146 in  \cite{FigalliRifford10}). 
% 
% \footnote{Tagliamo questa parte? Therefore, one expects that  
% the distance is smooth in the open  set $\F_3\setminus(\Cut_0\cup\Abn_0)$. A rigorous proof of such statement
% involves a detailed discussion of the subRiemannian exponential and of conjugate points. 
% We plan to come back to such aspects in a further research.} 

The paper is structured as follows.
Section \ref{preliminare} contains preliminaries. 
In Section \ref{sec3} we prove  the inclusion $\Sigma\subset \operatorname{Cut}_0$ and Theorem \ref{secondoteorema}. 
 In Subsection \ref{quattrouno}    we prove Theorem~\ref{teo1.3} and we conclude the proof of Theorem \ref{teo1.1}   (inclusion $\Cut_0\subset\Sigma$).   In Subsection \ref{quattrocentodue} we describe the profile of subRiemannian spheres.    Section \ref{sec5} contains some remarks on the Hamiltonian point of view.  
Using such approach we show that in the strictly normal case, the cut time of extremal paths is a smooth explicit function of  the initial covector.   
In Section \ref{sfogliatella} we state and prove Theorem \ref{referee}, on the singularity of the subRiemannian distance at cut points.

\section{Preliminaries}\label{preliminare} 

\subsection{Bivectors}\label{pensaci} 
 Let $\V=\Span\{e_1,\dots, e_m\}$ be a linear space.
 Define  $\wedge^2\V:=\Span\{e_j\wedge e_k:1\leq j<k\leq m\}$. 
Given  two vectors $x,y\in \V$, the elementary bivector  $t=x\wedge y \in\wedge^2 \V$  can be expanded as
\[
 x\wedge y=\sum_{j }(x_j e_j)\wedge\sum_k  (y_k e_k)= \sum_{1\leq j<k\leq m}(x_jy_k-x_ky_j)e_j\wedge e_k
 =:\sum_{1\leq j<k\leq m}(x\wedge y)_{jk}e_j\wedge e_k.
\]
If $e_1,\dots, e_m$ is an orthonormal  basis   with respect to some inner product $\scalar{\cdot}{\cdot}$ on $\V$, 
we define the related product  on   $\wedge^2 \V$ by requiring that the basis 
$e_j\wedge e_k$ with $1\leq j<k\leq m$ is orthonormal. It turns out that, on elementary bivectors,
\begin{equation}\label{giroditalia} 
 \scalar{x\wedge y}{\xi\wedge \eta}=\scalar{x}{\xi}\scalar{y}{\eta}
 -\scalar{x}{\eta}\scalar{y}{\xi}
 \quad\text{for all } x,y,\xi,\eta\in\R^m.
\end{equation}
The inner product $\scalar{z}{\zeta}$ can be extended linearly to general bivectors.
% $z=\sum_{a=1}^n x_a \wedge y_a $ and $\zeta=\sum_{\a=1}^\nu \xi_\a\wedge \eta_\a$, for any $x_a,y_a,\xi_\a,\eta_\a\in\R^m$.
% Note that if $\R^m=V\oplus W$ decomposes as a sum with $V\perp W$ and  we  choose orthonormal 
% bases 
% $v_1,\dots, v_p$ of $V$
% and $w_1,\dots w_q$ of $W$, it turns out that  the family  $\{ v_j\wedge v_k, v_j\wedge w_\a,w_\a\wedge w_\beta: 1\leq j<k\leq p,\quad  1\leq \a<\beta\leq q\}$ is an orthonormal  basis of   $\wedge^2\R^m$ and ultimately
% the three terms in the decomposition 
% \begin{equation}
% \label{joeyy} 
%  \wedge^2\R^m=\wedge^2 V\oplus (V\wedge W)\oplus\wedge^2W 
% \end{equation} 
% are pairwise orthogonal. Here and hereafter we are keeping the short notation $V\wedge W:=\Span\{v\wedge
% w:v\in V,w\in W\}$.
% 
 
 Concerning the three dimensional case, $\operatorname{dim}\V=3$, it is well known that any bivector $t\in\wedge^2 \V$ can be written in the elementary form
$  t=x\wedge y,$
for suitable $x,y\in \V$. Although the choice of $x$ and $y$ is not unique, 
the subspace $\Span\{x,y\}$ does not depend on such choice and it is called \emph{support} of the bivector.  See the discussion in \cite{MontanariMorbidelli15}, where the higher dimensional case is treated.
 
% Let $M=-M^T\in\R^{m\times m}$ be a  skew-symmetric matrix of rank $2p\leq m$. By spectral theory, there are $p$ two-dimensional pairwise orthogonal subspaces $V_1,\dots V_p$,
% $p$ positive numbers $\la_1,\dots, \la_p>0$  and a corresponding orthonormal basis $v_h,v_h^\perp $ of each $V_h$ such that 
% \[
%  Mv_h=  \la_h v_h^\perp\quad\text{and }\quad M v_h^\perp = -\la_h v_h
%  \quad\text{ for all $h=1,\dots,p$.}
% \]
% In other words, we can write
% $
%  Mx=\sum_{h=1}^p \la_h \bigl(\scalar{x}{v_h} v_h^\perp - \scalar{x}{v_h^\perp}v_h\bigr)
% $.

Finally, it is easy to check that if    $ \V $ is a finite dimensional vector space   and if   $x_0$ and $y_0$ are independent in $\V$, 
then   $x\wedge y=x_0 \wedge y_0 $  implies that  $x,y\in\Span\{x_0,y_0\}.$

% Prodotto interno $\scalar{x\wedge y}{\xi\wedge \eta}$
% Copiare dal vecchio paper.
% \begin{enumerate}[nosep]
%  \item 
% \end{itemize}

\subsection{The free group \texorpdfstring{$\F_3$}{F3}} 
\label{jjmm}Let $\V$ be a three-dimensional vector
space with an inner product $\scalar{\cdot}{\cdot}$. Denote by $(x,t)$ variables in 
$\V\times\wedge^2 \V$. If $e_1, e_2, e_3$ is an orthonormal  basis of $\V$, then 
$x=x_1 e_1 + x_2 e_2 + x_3 e_3\sim(x_1, x_2, x_3)$ and $t=t_{12}e_1\wedge e_2+ t_{13}e_1\wedge e_3+
t_{23} e_2\wedge e_3\sim (t_{12},t_{13}, t_{23})\in\R^3$. Introduce the law
\begin{equation}\label{liberaccio}
(x,t)\cdot(\xi,\tau)=\Bigl(x+\xi, t+\tau+\frac 12 x\wedge  \xi\Bigr),
\end{equation}  
Denote by $\F_3$ the Lie group $(\V\times \wedge^2 \V, \cdot )$.
%with operation \eqref{liberaccio}. 

  Fix an orthonormal basis $e_1,e_2,e_3$ of $V$ and corresponding coordinates $(x_1,x_2,x_3,t_{12}, 
  t_{13}, t_{23})$ in $\V\times\wedge^2 \V$. Define the family of three vector fields
\begin{equation*}
% \begin{aligned}
 X_1
%  &
=\p_1-\frac {x_2}{2}\p_{12}-\frac {x_3}{2}\p_{13},\quad 
%  \\ 
 X_2 
%  &
 = \p_2 +\frac {x_1}{2} \p_{12}-\frac{x_3}{2}\p_{23},\quad 
% \\
X_3 
% &
= \p_3+\frac {x_1}{2}\p_{13}+\frac {x_2}{2}\p_{23}.
% \end{aligned}
\end{equation*}
Note the commutation relations $[X_j, X_k]=\p_{jk}$ for all $j,k$ such that $1\leq j<k\leq 3$. 
The vector fields $X_1,X_2,X_3$ are  homogeneous of degree $1$   with respect to the family of dilations $(\delta_r)_{r>0}$ defined by
\begin{equation}\label{dilation}
 \delta_r(x,t)=(rx,r^2 t)\qquad\forall \quad (x,t)\in V\times \wedge^2 V.
\end{equation}
Namely, we have $X_j(f\circ\delta_r)(x,t)=r X_jf(\delta_r(x,t))$ for all function $f$.

  A  path $\gamma\in W^{1,2}((0,T),\F_3)$ 
is said to be \emph{horizontal} if 
there is a \emph{control} $u\in L^2((0,T),\R^3)$ such that
we can write $\dot\gamma(s)=\sum_{j=1}^3u_j(s) X_j(\gamma(s)) $ for a.e.~$s\in(0,T)$. 
The  length of the horizontal path $\gamma$ is  $\length(\gamma):=\int_0^T\abs{u(t)}dt $. If $\gamma$ is arclength, then we have    $\length(\gamma)= \sqrt{T} (\int_0^T\abs{u(t)}^2dt  )^{1/2}$. 
Given points $(\wh x,\wh t)$ and $(x,t)$, define 
$d((\wh x,\wh t),(x,t))=\inf\bigl\{ \length(\gamma)\}$, where the infimum is taken among all horizontal curves $\gamma$ such that $\gamma(0)=(\wh x,\wh t)$ and $\gamma(T)=(x,t)$.  
It is well known that for any pair of points $(x,t),(\wh x,\wh t) \in\F_3$ the infimum is a minimum, i.e. there is a length-minimizing path.
 %  From  and 
Constant speed length-minimizing paths in $\F_3$ are curves $\gamma$ associated with controls   
(which we call \emph{extremal})  of the form
 \begin{equation}\label{fuoricontrollo} 
  u(s)=a\cos(\la s)+b\sin(\la s) +z,
 \end{equation}
where $a,b,z\in \V$ is an \emph{admissible triple}. By \emph{admissible triple} we mean an ordered triple of vectors $a,b,z\in\R^3$,  where the vectors  are pairwise orthogonal and such that
$\abs{a}=\abs{b} \gvertneqq 0$. 
These facts are well known (see \cite{AgrachevGentileLerario,LeDonneMontgomeryOttazziPansuVittone}). Here  we refer to the self-contained discussion in 
\cite[Section~3.1.3]{MontanariMorbidelli15}.
% , where  we know that extremal paths in the group $\F_3$ 
 Without loss of generality,  we can always assume that  $\lambda\geq 0$. The case $\lambda <0$ can be 
 recovered by an adjustment of the sign of $b$.
   In many computations below, controls will be written in the form $a\cos(2\phi s)
 +b\sin (2\phi s) + z$, and we will use 
%  we will let $\lambda=2\phi $ and we shall use 
 the variable  $\phi $. 

 The curve $s\mapsto \gamma(s)=(x(s),t(s))$ corresponding to the control $u$ in \eqref{fuoricontrollo} is obtained 
 integrating the ODE 
\begin{equation}\label{malafemmina} 
\begin{aligned}
 &\dot x = u
 ,\qquad 
 \dot t =\frac 12 x\wedge u
\end{aligned}
\end{equation}
with initial data $(x(0), t(0))=(0,0)$.
An elementary computation gives   the general form 
of extremal curves  
% \footnote{ho cancellato $= \gamma(s, a,b,z,2\phi)$}
\begin{equation}\label{jolo} 
\begin{aligned}
 \gamma(s, a,b,z,\la) & %= \gamma(s, a,b,z,2\phi)%
 = (x(s,a,b,z,\la),t(s ,a,b,z,\la))
\\
&= \bigg( \frac{\sin(\la s)}{\la}a+\frac{1-\cos(\la s)}{\la}b+sz,
\\  & 
 \qquad 
% \Bigl\{
\frac{\la s-\sin(\la s)} {2\la^2}
% \Bigr\} 
a\wedge b
+
% \Bigl\{
\frac{2(1-\cos(\la s))  - \la s\sin(\la s)}{2\la^2} 
% \Bigr\}
a\wedge z
\\& \qquad \qquad \qquad \qquad \qquad+
\frac{\la s(1+\cos(\la s)) - 2\sin(\la s)}{2\la^2}
b\wedge z 
\bigg).
\end{aligned}
\end{equation} 
 For integration of similar systems in higher dimension, see \cite{MonroyMeneses06}.
On sufficiently small intervals, 
 $\gamma$ is a length-minimizer (see \cite{LiuSussmann}, \cite{Rifford14} and \cite{AgrachevBarilariBoscain}).
Under linear change of parameter and  dilation in \eqref{dilation},  $\gamma$ behaves as follows
\begin{align}
\label{riparametrizzo}  \gamma(\mu s, a,b,z,\la)=\gamma (s,\mu a,\mu b,\mu z,\mu\la)
\qquad\text{for all $\mu>0 $, }
 \\
\label{dilato} 
 \gamma(  s, ra,rb,rz,\la)=\delta_r\gamma(s,a,b,z,\la)\qquad\text{for any  $r>0$.}
 \end{align}
  Observe that  $\gamma(s,a,b,z,\lambda)$ tends smoothly to $\gamma(s,a,b,z,0)=(a+z,0)$, as $\lambda\to 0$. 
   Finally, observe the following  rotation invariance property of the distance: 
  if $M\in O(3)$, then  $d(x, y\wedge z)= d( Mx,  My\wedge Mz)$ for all $x,y,z\in \R^3$.
\subsection{Abnormal curves in \texorpdfstring{$\F_3$}{Sigma}}\label{abnormalia} 
Let $u\in L^2((0,1),\R^3)$. It is easy to see that the solution  $s\mapsto\gamma_u(s)=:(x_u(s),t_u(s))$    of the ODE \eqref{malafemmina} with initial data $\gamma_u(0)=(0,0)$ is defined globally on $[0,1]$ for any choice of $u\in L^2((0,1),\R^3) $.
Denote by $E(u)$ such solution at time $s=1$. The map $E: L^2\to \F_3 $ is called \emph{end point} map.

Let $u(s)$ be an extremal control of the form 
% It has been shown in \cite{AgrachevGentileLerario} and \cite{MontanariMorbidelli15} that if a control 
% $u\in L^2((0,1),\R^3)$ is minimizing (i.e.~$\length(\gamma_u)=d(\gamma_u(0),\gamma_u(1))$ \color{black}, then $u$ must have the form \eqref{fuoricontrollo}. Let $u(s)$ be an extremal control,  i.e.
%  a function $u(s)$ of the form 
 \eqref{fuoricontrollo}.  By definition, the control $u$ is abnormal if the differential  $dE(u):L^2((0,1),\R^3)\to \F_3$ is not onto.   The corresponding curve $\gamma(\cdot,a,b,z,\lambda)$
 is called \emph{abnormal extremal curve}.  
 Denote by $\Abn_0$ the set of points  $(x,t)=\gamma(1)$, where $\gamma:[0,1]\to \R^3$ is an abnormal extremal curve with $\gamma(0)=(0,0)$.   
 From the discussion in \cite{LeDonneMontgomeryOttazziPansuVittone} (see also \cite[Section~3.1.3]{MontanariMorbidelli15}), we know  
 that 
\[
\Abn_0=\{(x,0)\in \V\times \wedge^2 \V\}=\V\times 0_{\wedge^2 \V}.
\]

\subsection{Cut time}
\begin{definition} 
Let   $a,b,z$ be an admissible triple, fix $\phi>0$
and let  $s\mapsto \gamma(s,a,b,z,2\phi) $  be the curve defined in \eqref{jolo}.  Define
\begin{equation}
\begin{aligned} t_\textup{cut}(a,b,z,\phi)&=\inf\Bigl\{\ol s\geq 0: 
 \text{  $  \gamma(\cdot ,a,b,z,2\phi) $ does not minimize length on $[0,\ol s]$
% e  $\gamma(\ol s,a,b,z,2\phi)$}
% \\& \text{ e che ha lunghezza minore di $\gamma|_{[0,,\ol s]}$
}
\Bigr\}
\\&=\sup  \Bigl\{\ol s>0 : 
 \text{  $   \gamma(\cdot ,a,b,z,2\phi) $ is a length minimizer on $[0,\ol s]$}
\Bigr\}.
\end{aligned}
\end{equation} 
\end{definition}
 It is well known that $t_\cut>0$ for all such curve. Moreover,   if $t_\cut<\infty$, then the supremum is a maximum.

Standard invariance properties give the following general form for the cut time. 
\begin{proposition}\label{ticutto}  There exists a function
$h_\cut:[0,+\infty\mathclose [ 
\to  \mathopen  ] 0,+\infty] $ such that
\begin{equation}\label{ticatto} 
 t_\textup{cut}(a,b,z,\phi)=\frac{h_\cut(\abs{z}/\abs{a})}{ \phi }
\end{equation}
for all admissible triple $a,b,z$ and $\phi>0$.
\end{proposition}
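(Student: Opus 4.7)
The plan is to derive the formula directly from the three symmetries of the family $\{\gamma(\cdot, a, b, z, \la)\}$ noted in Section~\ref{jjmm}: the reparametrization identity~\eqref{riparametrizzo}, the dilation identity~\eqref{dilato}, and the rotation invariance of~$d$. First I would translate each of these into a transformation rule for the cut time. From~\eqref{riparametrizzo}, the curve $s\mapsto\gamma(s, \mu a, \mu b, \mu z, 2\mu\phi)$ is the curve $\gamma(\cdot, a, b, z, 2\phi)$ traversed at speed~$\mu$, so since length and length-minimality are invariant under monotone reparametrization,
\begin{equation*}
t_{\cut}(\mu a, \mu b, \mu z, \mu\phi) \,=\, \frac{t_{\cut}(a, b, z, \phi)}{\mu}, \qquad \mu>0.
\end{equation*}
From~\eqref{dilato} together with $d(\delta_r p, \delta_r q)=r\,d(p,q)$, the dilation $\delta_r$ rescales every length by $r$ and therefore preserves minimality, which gives
\begin{equation*}
t_{\cut}(r a, r b, r z, \phi) \,=\, t_{\cut}(a, b, z, \phi), \qquad r>0.
\end{equation*}
Finally, the rotation invariance of $d$ stated at the end of Section~\ref{jjmm} says that the natural extension of $M\in O(3)$ to $\V\times\wedge^2\V$ via $M\cdot(x, y\w z)=(Mx, My\w Mz)$ is a $d$-isometry, which yields
\begin{equation*}
t_{\cut}(Ma, Mb, Mz, \phi) \,=\, t_{\cut}(a, b, z, \phi), \qquad M\in O(3).
\end{equation*}

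I would then apply these three rules in sequence. First, reparametrize with $\mu=1/\phi$ to reduce the angular frequency to~$1$; next, dilate with $r=\phi/|a|$ to normalize $|a|$ to~$1$; finally, choose $M\in O(3)$ sending the orthonormal pair $(a/|a|,\, b/|a|)$ to $(e_1, e_2)$. Since $z/|a|$ is orthogonal to both $a/|a|$ and $b/|a|$, the image $M(z/|a|)$ lies along $\pm e_3$; composing if necessary with the reflection through the $(e_1,e_2)$-plane, which lies in $O(3)$ and fixes $e_1,e_2$, we may assume $M(z/|a|)=(|z|/|a|)\,e_3$. Setting
\begin{equation*}
h_{\cut}(\s) \,:=\, t_{\cut}(e_1,e_2,\s e_3,1), \qquad \s\geq 0,
\end{equation*}
the chain of the three identities yields $\phi\cdot t_{\cut}(a,b,z,\phi)=h_{\cut}(|z|/|a|)$, which is~\eqref{ticatto}.

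No step is a serious obstacle; the proposition is a pure equivariance statement. The only mild verification needed is that the extension of $M\in O(3)$ to $\F_3$ through its natural action on bivectors is genuinely an isometry of $d$, which is precisely the rotation invariance recorded in Section~\ref{jjmm} and is compatible with the inner product~\eqref{giroditalia} on $\wedge^2\V$ since $M$ preserves the inner product on $\V$. Positivity of $h_{\cut}$ comes from the general fact $t_{\cut}>0$, and the value $+\infty$ is allowed in the target by definition and will actually be approached, as hinted at by the discussion following Theorem~\ref{teo1.3}.
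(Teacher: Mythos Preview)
Your proof is correct and follows essentially the same approach as the paper: both arguments derive~\eqref{ticatto} from the three invariances (reparametrization~\eqref{riparametrizzo}, dilation~\eqref{dilato}, and rotation invariance of~$d$) by successively normalizing $\phi$, $|a|$, and the directions of $a,b,z$. The paper presents the steps in a slightly different order---first using rotation to reduce to $(re_1, re_2, \rho e_3)$, then reparametrization and dilation---but the substance is identical.
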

Note that we do not need to define $h_\cut(\infty)$ because the case $z\neq 0$ and $a=b=0$ corresponds to a constant control $u(s)=z$. Such kind of control is included in the class of admissible controls by choosing  $\phi=0$. In such case it is easy to see that   the corresponding curve minimizes globally, i.e. $t_\cut=+\infty$.
  Observe finally that $h_\cut$ can never vanish, by a classical result in control theory (see \cite{LiuSussmann}, \cite{Rifford14} and \cite{AgrachevBarilariBoscain}) 

One of the main result of our paper is the calculation of the function $h_\cut$. Interestingly,    it turns out that
the function $h_\cut$ is bounded.

\begin{proof}   
In order to show that the cut time of the curve corresponding to the control $u(s)=a\cos(2\phi s)+
b\sin(2\phi s)+ z$ depends on $\abs{a},\abs{z}$ and 
$\phi $, 
it suffices to identify $\V$ with $\R^3=\Span\{e_1,e_2,e_3\}$ where $e_1,e_2,e_3$ are orthonormal and $a=r e_1$, $b= r e_2$ and $z =\rho e_3$. Then the control $u$ becomes
$
 u(s)=\cos( 2\phi s)r e_1+\sin(2\phi  s) re_2+\rho e_3
$,
and it is clear that the cut time depends on $r,\rho$ and $ \phi$.

Let now $\phi=1 $, i.e.~consider the path
$ \sigma\mapsto \gamma(\sigma, a,b, z,2)$. Let $\ol\sigma(r,\rho)$ its cut time, where   $r=\abs{a}$ and $\rho=\abs{z}$. Taking an arbitrary $\phi>0$,  the riparametrization invariance \eqref{riparametrizzo}
shows that the path
\[
 s\mapsto\gamma(s,a,b,z,2\phi)=\gamma\Bigl(\phi s,\frac{a}{\phi} ,\frac{b}{\phi}, \frac{z}{\phi},2\Bigr)
\]
is length minimizing up to the time $s=\dfrac{\ol\sigma( r/\phi,\rho/\phi)}{\phi }$.  
By the dilation invariance \eqref{dilato}, we also have  
\begin{equation*}
\ol\sigma( r/\phi, \rho/\phi)=\ol\sigma(r,\rho)=\ol\sigma(1,\rho/r)=:h_\cut(\rho/r)
\in\left]0,+\infty\right].
\end{equation*}
Thus the cut time has the form   \eqref{ticatto}, as required.
\end{proof}
We know that     $h(0)=\pi$, because in such case we are in Heisenberg subgroups of the form 
$\Span\{(a,0),(b,0),(0,a\wedge b)\}$. We will show that  $h_\cut(\mu)\to\phi_1$, as $\mu\to+\infty$, where $\phi_1\in \mathopen]\pi,\frac 32\pi\mathclose[$ denotes the first positive solution of the equation 
$\tan\phi=\phi$.

Define for $\phi=\la/2\geq 0$  and $a,b,z$ admissible triple
\begin{equation}\label{FFF} 
\begin{aligned}
 F(a,b,z, \phi):& =\gamma(1,a,b,z,2\phi).
\end{aligned}
\end{equation} 
% Introducing the notation
% \begin{equation}
% \begin{aligned}
% s(\phi)
% % &:=s_\phi
% :=\frac{\sin\phi}{\phi},\qquad
% % \\
% u(\phi)
% % & :=u_\phi
% :=\frac{\phi-\sin\phi\cos\phi}{4\phi^2},\qquad
% % \\
% v(\phi) 
% % &:=v_\phi
% :=\frac{\sin\phi-\phi\cos\phi}{2\phi^2}
% \end{aligned}
% \end{equation} 
Recall the  definition of the functions $S,U,V$ introduced in \eqref{uvuesse}
\begin{equation*}
\begin{aligned}
S(\phi)
% &:=s_\phi
:=\frac{\sin\phi}{\phi},\qquad
% \\
U(\phi)
% & :=u_\theta
:=\frac{\phi -\sin\phi\cos\phi}{4\phi^2},\qquad
% \\
V(\phi) 
% &:=V_\theta
:=\frac{\sin\phi-\phi\cos\phi}{2\phi^2}.
\end{aligned}
\end{equation*} 
After some elementary calculations, it turns out that
\begin{equation}\label{laeffes} 
 F(a,b,z,\phi)=\Bigl(S(\phi)(a\cos\phi+b\sin\phi)+ z\,,\;
 U(\phi)a\wedge b+ V(\phi)(a\sin\phi -b\cos\phi)
 \wedge z\Bigr).
\end{equation} 
\begin{remark}\label{cambiovariabile} After the orthogonal change of variable 
\begin{equation}\label{cambiovaluta} 
 a'=a\sin \phi-b\cos\phi\quad\text{and}\qquad b'=a\cos\phi+b\sin\phi
\end{equation}
we have
\begin{equation*}
 \begin{aligned}
F(a,b,z,\phi)=G(a', b', z,\phi), \quad\text{ where}
\\
G(\a,\b, \z ,\phi): =
\bigl(S(\phi) \b  +\z ,\a\wedge(U(\phi)\b+V(\phi)\z)\bigr)\end{aligned}
\end{equation*} 
for all admissible $\a,\b,z$ and $\phi>0$. Note that $a',b',z$ are admissible if and only if $a,b,z$ are admissible. Moreover, $\abs{a'}=\abs{a}$, $\abs{b'}=\abs{b}$  and $a'\wedge b'=a\wedge b$.  
% Actually the function $G$ would appear naturally if we would integrate the control  $u(s)=a\cos(  2\phi \color{black} s)+b\sin(  2\phi \color{black} s)+z$ on the interval $[-\frac 12,\frac 12]$, instead of $[0,1]$.
%
%
%for all admissible $\a,\b,z$. Note that $a',b',z$ are admissible if and only if $a,b,z$ are admissible. Moreover, $\abs{a'}=\abs{a}$, $\abs{b'}=\abs{b}$  and $a'\wedge b'=a\wedge b$.  Actually the function $G$ would appear naturally if we would integrate the control  $u(s)=a\cos(\la s)+b\sin(\la s)$ on the interval $[-\frac 12,\frac 12]$, instead of $[0,1]$.
%for all admissible $\a,\b,z$. Note that $a',b',z$ are admissible if and only if $a,b,z$ are admissible. Moreover, $\abs{a'}=\abs{a}$, $\abs{b'}=\abs{b}$  and $a'\wedge b'=a\wedge b$.  Actually the function $G$ would appear naturally if we would integrate the control  $u(s)=a\cos(\la s)+b\sin(\la s)$ on the interval $[-\frac 12,\frac 12]$, instead of $[0,1]$.
\end{remark}

% 
% 
%  \\
% \label{derivodue}  u'(\theta)&=\frac{\cos\theta}{\theta}V(\theta)
%  \\
% \label{derivotre}   \text{  se serve...}
% \end{align}
% 
% \end{subequations}

\section{Extremal paths and the set \texorpdfstring{$\Sigma$}{Sigma}}\label{sec3}
Define the set $\Sigma\subset\R^3\times\wedge^2\R^3$ as
\begin{equation}
\begin{aligned}
 \Sigma: &=\Bigl\{ (x,t)\in\R^3\times\wedge^2\R^3: x\perp\operatorname{supp} t\text{ and $t\neq 0$}\Bigr\}
 \\&= \{ (x,y \wedge y' ): x,y,y' \in\R^3 \text{ where
  $ y$ and $  y' $  are independent and $x\perp\{y,y' \}$}\}.
\end{aligned}
\end{equation} 
  Here and hereafter, without loss of generality, we may assume that $\V=\R^3$ and
$\scalar{\cdot}{\cdot}$ is the Euclidean inner product. 
% where without loss of generality we assume that $\scalar{y}{y^\perp}=0$ and $\abs{y}=\abs{y``\perp}>0$.
We will show in this section that    $\Sigma\subset \Cut_0$. The proof of the opposite inclusion will be achieved at the end of Subsection \ref{quattrouno}.

Define the function 
\begin{equation} \label{vudoppio}
 W(\theta)=U(\theta)-S(\theta)V(\theta)=\frac{\theta^2+ \theta\sin\theta\cos\theta-2\sin^2\theta}{4\theta^3},
\end{equation}
where $S,U$ and $V$ are defined in \eqref{uvuesse}.

\begin{lemma}\label{vudop} 
$ W(\theta)>0$ for all $\theta>0$. 
\end{lemma}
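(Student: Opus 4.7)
My plan is to observe that, since $4\theta^3 > 0$, it suffices to prove positivity of the numerator
\[
 f(\theta) := \theta^2 + \theta\sin\theta\cos\theta - 2\sin^2\theta
\]
on $(0,+\infty)$. First I would compute the boundary values $f(0) = f'(0) = 0$ and, crucially, the factorization
\[
 f''(\theta) = 4\sin\theta\,\bigl(\sin\theta-\theta\cos\theta\bigr),
\]
obtained by straightforward differentiation and use of $1-\cos 2\theta = 2\sin^2\theta$. I would also record the auxiliary identity $(\sin\theta - \theta\cos\theta)' = \theta\sin\theta$. Getting this clean form of $f''$ is the key computation; everything that follows is sign analysis.

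On $(0,\pi)$ both factors of $f''$ are positive: $\sin\theta>0$ is clear, and $\sin\theta - \theta\cos\theta$ vanishes at $0$ with derivative $\theta\sin\theta > 0$ on $(0,\pi)$, hence is strictly increasing and therefore strictly positive on $(0,\pi]$. Consequently $f''>0$ on $(0,\pi)$, so $f'$ is strictly increasing on $[0,\pi]$ from $f'(0)=0$, which in turn forces $f(\theta)>0$ on $(0,\pi]$.

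For $\theta \geq \pi$ I would use a crude elementary bound: from $|\sin\theta\cos\theta| \leq \tfrac{1}{2}$ and $\sin^2\theta \leq 1$ one obtains
\[
 f(\theta) \geq \theta^2 - \frac{\theta}{2} - 2,
\]
whose larger root $(1+\sqrt{33})/4 \approx 1.69$ lies below $\pi$, so the right-hand side is already positive at $\theta = \pi$ and strictly increases thereafter. Combined with the previous step this yields $f(\theta)>0$ for every $\theta>0$, completing the proof.

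The only mildly delicate point is that $f''$ changes sign past $\pi$ (the factor $\sin\theta$ becomes negative on $(\pi,2\pi)$), which is precisely why the monotonicity-via-$f''$ argument is confined to $(0,\pi)$ and replaced by a simple algebraic estimate for larger $\theta$. With the split at $\pi$ in place, both pieces are routine; no deeper analysis of the zeros of $V$ or of $\tan\theta=\theta$ is needed here.
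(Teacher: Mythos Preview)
Your proof is correct and follows essentially the same strategy as the paper's: a crude algebraic lower bound handles large $\theta$, while repeated differentiation handles small $\theta$. The paper splits at $\theta=2$, substitutes $x=2\theta$, and uses $p'''(x)=\sin x-x\cos x>0$; your direct factorization $f''(\theta)=4\sin\theta(\sin\theta-\theta\cos\theta)$ is a slightly slicker variant that reaches the same conclusion with one fewer derivative and without reference to~$\phi_1$.
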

\begin{proof}
Let $q(\theta)=\theta^2+\theta\sin\theta\cos\theta-2\sin^2\theta$, 
for all  $\theta>0$ we get   
$
 q(\theta)>\theta^2- \theta-2$ which is positive if $\theta>2$.
For $\theta\in(0,2)$, write $2\theta=x $ and 
$
 q(\theta)=\frac 14(x^2+x\sin(x) - 4+4\cos(x))=:\frac 14p(x).
$
Differentiating we find $p(0)=p'(0)=p''(0)=0$  and 
$
 p'''(x)=\sin(x)-x\cos x
$,  which is positive 
for all $x\in(0,\phi_1)$, where $\phi_1>4$ is the first positive solution of $\tan(x)=x$. Thus the function is increasing in $ \theta \in(0,  \phi_1/ 2 )\supset(0,2)$ and the proof is concluded.
\end{proof}

Let
$P:\mathopen]0,+\infty\mathclose[\setminus\bigcup_{k\in\N}\{\phi_k\}$
\begin{equation}\label{definiscopi} 
 P(\theta): =  -\frac{S(\theta) }{V(\theta)}\sqrt{\frac{W(\theta)}{U(\theta)}}
\end{equation} 
where $\phi_k\in\mathopen]k\pi, (k+\frac 12)\pi\mathclose[$ denotes the $k$-th positive solution of $V(\theta)=0$ (i.e.~$\tan(\theta)=\theta$). Note that $P(k\pi)=0$ and $P(\phi_k-)=+\infty$ for all $k\in\N$. We will show that $P$ is strictly increasing in $\mathopen]\pi,\phi_1\mathclose[$. Actually the same happens in any interval 
$\mathopen]k\pi, \phi_k\mathclose[$ for $k\geq 2$, but we do not need such property. A plot of $P$ is exhibited in Figure \ref{figura1}.

\subsection{Characterization of extremal controls connecting the origin with \texorpdfstring{$\Sigma$}{Sigma}}
Next we find all extremal curves (minimizing or not) which connect the origin with a point $(x,t)\in\Sigma$.  Without loss of generality we always write a vector $t\in\wedge^2 \R^3$ in the form 
$t=y\wedge y^\perp$ for a suitable pair of orthogonal
vectors with equal length. In such case, it turns out that $\abs{t}=\abs{y}^2$ (see \eqref{giroditalia}).  
\begin{theorem} [Characterization of extremal paths passing for a given point of $\Sigma$]
\label{unmaintheorem} 
Let $(x,y\wedge y^\perp)\in\Sigma$, where $x,y,y^\perp$ are pairwise orthogonal and $\abs{y}=\abs{y^\perp}>0$.
 Then the following facts hold.  
\begin{enumerate}[nosep,label=(\roman*)]
 \item  \label{itemuno} Let $\theta >0$ be any solution of the equation
\begin{equation}\label{equazioniamo} 
 P(\theta)
 =\frac{\abs{x}^2}{\abs{y}^2}.
\end{equation} 
Let $\sigma\in\R$ be an arbitrary parameter and choose the corresponding vectors
\begin{equation}\label{corresponding} 
 \left\{
 \begin{aligned}
& \a' : =\a'_\s:=\frac{1}{(U_\theta W_\theta)^{1/4}}(y\sin\s -y^\perp\cos\s)  
\\&
\b' :=\b'_\s:= \frac{(U_\theta W_\theta)^{1/4}}{W_\theta}(y\cos\s+y^\perp\sin\s)-\frac{V_\theta}{W_\theta}x
\\& 
\z: =\z_\s:= \frac{U_\theta}{W_\theta}x-\frac{S_\theta}{W_\theta}(U_\theta W_\theta)^{1/4}(y\cos\s+y^\perp\sin\s) 
 \end{aligned}
\right.
\end{equation} 
Then the triple $\a'_\s,\b'_\s,\z_\s$ is admissible and the path 
\begin{equation}\label{camminassimo} 
 s\mapsto \gamma(s, \a'_\s\sin\theta +
 \b'_\s\cos\theta , \b'_\s\sin\theta-\a'_\s\cos\theta,\z_\s ,2\theta)=:
\gamma_\s(s)
\end{equation} 
 satisfies $\gamma_\s(1)=(x,y\wedge y^\perp)$
(the function $\gamma(s,a,b,z,2\phi) $ is defined in \eqref{jolo}).

\item \label{itemdue} If $\theta,\a',\b',\z$  satisfy \eqref{equazioniamo} and \eqref{corresponding}, for all $\sigma\in\R $ the length of the path  $\gamma_\s$ in \eqref{camminassimo} does not depend on $\sigma$ and is 
\begin{equation*}
 \length^2(\gamma_\s|_{[0,1]}) = \abs{x}^2+\frac{1-S(\theta)^2}{\sqrt{U(\theta)W(\theta)\,}}\abs{y}^2.
\end{equation*} 

\item \label{itemtre} If $\theta,\a',\b',\z$  satisfy \eqref{equazioniamo} and \eqref{corresponding}, for all $\sigma\in\R $ we have
\begin{equation}
 \frac{\abs{\z}^2}{\abs{\b'}^2}=\frac{\abs{\z}^2}{\abs{\a'}^2}=  -\frac{U(\theta)S(\theta)}{V(\theta)}
 =:Q(\theta).
\end{equation} 

\item \label{itemquattro} Let  $\gamma$ be an extremal path on $[0,1]$ which satisfies $\gamma(0)=0$ and $\gamma(1)=(x,y\wedge y^\perp)$.    Then there is $\theta>0 $ satisfying   
\eqref{equazioniamo}, there is $\sigma\in\R$ such that $\gamma$ has the form \eqref{camminassimo}, where $\a'_\s,\b'_\s$ and $\z_\s$ are the vectors appearing  in \eqref{corresponding}.

\end{enumerate}

\end{theorem}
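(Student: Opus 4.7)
The plan is to treat (i)--(iii) as direct verifications of the explicit ansatz (\ref{corresponding}) and (iv) as the corresponding inversion. The central simplification is Remark \ref{cambiovariabile}: the orthogonal change of variables built into (\ref{camminassimo}) is chosen precisely so that the primed parameters of $\gamma_\s$ at $\phi=\theta$ are $\a'_\s,\b'_\s,\z_\s$. Consequently $\gamma_\s(1)=G(\a'_\s,\b'_\s,\z_\s,\theta)=(S(\theta)\b'_\s+\z_\s,\,\a'_\s\wedge(U(\theta)\b'_\s+V(\theta)\z_\s))$, so every claim reduces to algebra in the plane $\Span\{y,y^\perp\}$ together with scalar identities in $S,U,V,W$.

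For (i), I would plug (\ref{corresponding}) into $G$. In the $x$-coordinate the two contributions along $y\cos\s+y^\perp\sin\s$ cancel and the $x$-coefficient collapses to $(U-SV)/W=1$. Similarly $U\b'_\s+V\z_\s=(UW)^{1/4}(y\cos\s+y^\perp\sin\s)$ by the identity $U-SV=W$, so $\a'_\s\wedge(U\b'_\s+V\z_\s)=(y\sin\s-y^\perp\cos\s)\wedge(y\cos\s+y^\perp\sin\s)=y\wedge y^\perp$. For admissibility, $\a'_\s$ lies along $y\sin\s-y^\perp\cos\s$ while the $\Span\{y,y^\perp\}$-components of $\b'_\s,\z_\s$ lie along the orthogonal direction $y\cos\s+y^\perp\sin\s$, so $\a'_\s\perp\b'_\s$ and $\a'_\s\perp\z_\s$ are automatic; the remaining conditions $\b'_\s\perp\z_\s$ and $|\a'_\s|=|\b'_\s|$ both collapse, after straightforward expansion, to $P(\theta)=|x|^2/|y|^2$, i.e.\ to (\ref{equazioniamo}). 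For (ii), an extremal control has constant modulus, so $\length^2(\gamma_\s|_{[0,1]})=|\a'_\s|^2+|\z_\s|^2$; expanding and using (\ref{equazioniamo}) to eliminate $|x|^2/|y|^2$ gives the stated formula. For (iii), the same type of expansion reduces $|\z_\s|^2/|\a'_\s|^2$ to $-SU/V=Q(\theta)$.

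For (iv) I would reverse the construction. Given an extremal $\gamma(\cdot,a,b,z,2\phi)$ with endpoint $(x,y\wedge y^\perp)$, one has $\phi>0$ (otherwise the $t$-component vanishes). Passing to the primed variables $a'=a\sin\phi-b\cos\phi$, $b'=a\cos\phi+b\sin\phi$ and equating components of $G(a',b',z,\phi)=(x,y\wedge y^\perp)$ yields $z=x-Sb'$ and $a'\wedge(Wb'+Vx)=y\wedge y^\perp$. Since $y\wedge y^\perp$ has support $\Span\{y,y^\perp\}$, both $a'$ and $Wb'+Vx$ must lie in this plane, which pins the $x$-component of $b'$ to $-V/W$. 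Writing $a'=\a_1 y+\a_2 y^\perp$ and $b'=-\frac{V}{W}x+\b_1 y+\b_2 y^\perp$, the admissibility conditions $\scalar{a'}{b'}=0$, $\scalar{b'}{z}=0$, $|a'|=|b'|$ become, respectively, $(\a_1,\a_2)\perp(\b_1,\b_2)$ in $\R^2$, a closed-form expression for $\b_1^2+\b_2^2$ in terms of $|x|^2/|y|^2$ and $S,U,V,W$, and the matching one for $\a_1^2+\a_2^2$. Combined with the determinant identity $\a_1\b_2-\a_2\b_1=1/W$ forced by the bivector equation, the product of the two squared $\R^2$-norms must equal $1/W^2$; after cancellation this forces $|x|^4/|y|^4=S^2W/(UV^2)$, equivalent to (\ref{equazioniamo}). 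The remaining one-parameter freedom is the angle of $(\a_1,\a_2)$ in $\R^2$, parametrized by $\s$, which exactly reproduces (\ref{corresponding}).

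The principal obstacle lies in (iv), and specifically in the sign analysis: after squaring to obtain $|x|^4/|y|^4=S^2W/(UV^2)$ one must verify that $S$ and $V$ are forced to have opposite signs, so that extracting the square root gives $P(\theta)$ rather than $-P(\theta)$. The nonnegativity of $\a_1^2+\a_2^2$ and $\b_1^2+\b_2^2$ imposes $SV<0$, which localizes $\theta$ in the intervals where (\ref{equazioniamo}) has a genuine solution with positive right-hand side (the smallest such interval being $[\pi,\phi_1\mathclose[$; selecting the actually minimizing branch is a separate issue handled in the proof of Theorem~\ref{secondoteorema}). A secondary delicate point is to check that the orthogonal change $(a,b)\mapsto(a',b')$ loses no admissible extremals, which follows because it preserves orthogonality and the equality $|a|=|b|$.
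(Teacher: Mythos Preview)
Your proposal is correct and follows essentially the same route as the paper. The paper also reduces everything to the primed coordinates via Remark~\ref{cambiovariabile}, derives the same pair of equations $S_\theta\b'+\z=x$ and $\a'\wedge(U_\theta\b'+V_\theta\z)=y\wedge y^\perp$, and extracts the compatibility condition $P(\theta)=\abs{x}^2/\abs{y}^2$ from the admissibility constraints together with the sign observation $S_\theta V_\theta<0$; the only cosmetic difference is that the paper proves (iv) first and then verifies (i)--(iii), and in (iv) it parametrizes $U_\theta\b'+V_\theta\z=\xi y+\eta y^\perp$ directly rather than first eliminating $\z$ via $\z=x-S_\theta\b'$ as you do.
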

  Item \ref{itemuno} and \ref{itemquattro} give the characterization of all extremal  paths connecting the origin with points of $\Sigma$. The length in \ref{itemdue} is needed to discuss their minimizing properties and ratio $Q(\theta)$ appearing in \ref{itemtre} is crucial in the calculation of the cut time. 

\begin{figure} [ht]
 \label{figura1}
%  \centerline{\includegraphics{funzione_s.1} } \caption{ File funzione s. Funzione $s$ su $[0,2\pi]$}
 \includegraphics{function_P.eps}  
% \centerline{\includegraphics{function_P_gimp.jpg}  }
\caption{A plot of the positive part of the function $P$  for $\theta<\phi_2$ with a representation of $\theta_1$ and $\theta_2$, the first two (of the infinitely many)  solutions  of the equation $P(\theta)=\frac{\abs{x}^2}{\abs{t}}$.}
\end{figure}

\begin{remark}
If we take a curve  $\gamma_\s(s)=(x_\s(s),t_\s(s))$ and  we rearrange the choice of $y,y^\perp$ to ensure that $\a'$ and $\b'$ take the simple form
\begin{equation}\label{riarrangeria}
\a'=\frac{y}{(U_\theta W_\theta)^{1/4}}\qquad 
\b'=\frac{(U_\theta W_\theta)^{1/4}}{W_\theta}y^\perp -\frac{V_\theta}{W_\theta}x
\qquad 
\z = \frac{U_\theta}{W_\theta}x-\frac{S_\theta}{W_\theta}(U_\theta W_\theta)^{1/4}
 y^\perp 
,\end{equation}
we discover that 
the corresponding control  $u(s) $  takes the  simple   form 
\begin{equation}
\begin{aligned}
 u(s)&=
% a\cos(2\theta s) + b\sin(2\theta s)+\z \\&=
(\a'\sin\theta+\b'\cos\theta)\cos(2\theta s)+(\b'\sin\theta - \a'\cos\theta)\sin(2\theta s)
+\z
 \\&=\a'\sin(\theta(1-2s))+\b'\cos (\theta(1-2s))+\z
\end{aligned}
\end{equation} 
where $\a',\b',\z$ are the vectors in \eqref{riarrangeria} and   $s\in[0,1]$. 
\end{remark}

 \begin{proof}[Proof of Theorem \ref{unmaintheorem}]
We start with the proof of part  \ref{itemquattro} of the statement. Let $(x,y\wedge y^\perp)\in\Sigma$ and assume that 
 $x,y,y^\perp$ are pairwise orthogonal,  $\abs{y}=\abs{y^\perp}>0$,   while $x$ may possibly vanish.  Let
$s\mapsto \gamma(s,\a,\b,\z,2\theta)$ be the extremal path defined in \eqref{jolo} and assume that  $\gamma(1)=(x,y\wedge y^\perp)$. We must solve the system $F(\a,\b,\z,\theta)=(x, y\wedge y^\perp)$, where $F$ appears in \eqref{FFF} and  
\eqref{laeffes} 
    and the triple $\a,\b,\z$ must be admissible.
Write for brevity $U_\theta, V_\theta, S_\theta$ instead of $U(\theta),V(\theta), S(\theta)$.
Observe also that  $\theta$ cannot be $0$, because $F(\a,\b,\z,0)=(a+z,0)$ cannot belong to $\Sigma$.
Thus assume that $\theta$
is strictly positive (we can always  exclude the case $\theta<0$, see the discussion after \eqref{fuoricontrollo}).   Using the explicit form of $F$ we find the system 
\begin{equation*}
\left\{
\begin{aligned}
 &S_\theta(\a\cos\theta+\b\sin\theta)+\z=x
 \\&
 U_\theta \a\wedge\b+V_\theta(\a\sin\theta - \b\cos\theta)\wedge\z = y\wedge  y^\perp .
\end{aligned}
\right.              \end{equation*}
Making the change of variable
\begin{equation}\label{alexis} 
 \a'=\a\sin\theta-
 \b\cos\theta\quad\text{and}\quad \b'=\a\cos\theta+\b\sin\theta,
 \end{equation} 
(see Remark \ref{cambiovariabile}), 
%  $\a',\b',\z$ is an admissible triple iff $\a,\b,\z$ is an admissible triple. 
% Moreover,  $\a'\wedge\b'=\a\wedge\b$. Then 
we get 
\begin{equation}\label{lapponia} 
\left\{
\begin{aligned}
 &S_\theta\b'+\z=x
 \\&
 \a'\wedge\bigl( U_\theta \b' +V_\theta\z) = y\wedge  y^\perp.               
\end{aligned}
\right.              \end{equation}
Therefore $\a'$ and $U_\theta \b' +V_\theta\z$ belong to $\Span\{y,y^\perp\}$.   Furthermore they must be orthogonal, by admissibility.  This means that we can write
\begin{align}
\label{laddo}  U_\theta \b' +V_\theta\z &=\xi y+\eta y^\perp
 \\
 \label{laddo2} 
 \a' &= q\eta y-q\xi y^\perp,
\end{align}
where $q\neq 0$ and $\xi,\eta\in\R$ satisfy $\xi^2+\eta^2\neq 0$.
The first line of \eqref{lapponia} and \eqref{laddo} are linear in $\b'$ and $\zeta$. Solving, we get
\begin{equation}\label{lapponie} 
 \b'= \frac{\xi}{W_\theta}y+\frac{\eta}{W_\theta}y^\perp-\frac{V_\theta}{W_\theta}x \quad\text{and}\quad
\qquad  \z= 
 \frac{U_\theta}{W_\theta}x -\frac{S_\theta\xi}{W_\theta}y-\frac{S_\theta\eta}{W_\theta}y^\perp,
\end{equation} 
where $W_\theta :=U_\theta-S_\theta V_\theta $ cannot vanish, by Lemma \ref{vudop}. 
On the vectors $\a'$ in \eqref{lapponia} and $ \b',\z$ in \eqref{lapponie}, we must require that
% 
% (A1): 
$\abs{\a'}=\abs{\b'}$. Moreover it must be
% 
% (A2): 
$\scalar{\b'}{\z}=0$
and finally
% (A3): 
$\a'\wedge (U_\theta\b'+V_\theta\z)=y\wedge y^\perp$.
These three conditions become
% % (A4): $\a\perp\beta$
% 
% 
% \noindent More precisely, we get 
\begin{align*}
%   \tag{A1} 
  & 
  W_\theta^2q^2(\xi^2+\eta^2)\abs{y}^2
   =(\xi^2+\eta^2)\abs{y}^2 + V_\theta^2 \abs{x}^2
\\& 
% \tag{A2} 
S_\theta(\xi^2+\eta^2)\abs{y}^2+ U_\theta V_\theta \abs{x}^2=0
\\& 
% \tag{A3}
q(\xi^2+\eta^2)=1.
% \\&\tag{A4}
% \a_1\xi+\a_2\eta=0
\end{align*}
We used the fact that $x,y,y^\perp$ are pairwise 
orthogonal and $\abs{y}=\abs{y^\perp}$.
Observe that  $q>0$ and that the  unknown $\xi$ and $\eta$ appear in the form  $(\xi^2+\eta^2)$. Therefore we will find an infinite family of curves.

Eliminating $(\xi^2+\eta^2)$, the first two equations become 
\begin{equation}\label{gattino} 
%  \left\{
%  \begin{aligned}
%   & 
  W_\theta^2 q\abs{y}^2 =
  \frac{\abs{y}^2}{q} +V_\theta^2 \abs{x}^2
  \qquad\text{and}\qquad 
%   \\&
 \frac{S_\theta}{q}\abs{y}^2   +U_\theta V_\theta\abs{x}^2=0.
%  \end{aligned}
% \right.
\end{equation}
Note that $U_\theta>0$ for all $\theta>0$. Moreover,  it cannot be $V_\theta=0$, because in such case it should be $S_\theta=0$ too, but this is impossible because $V$ ans $S$ do not have common zeros on 
$\mathopen]0,+\infty\mathclose[$.
Therefore,  $q =- \dfrac{S_\theta\abs{y}^2} {U_\theta V_\theta \abs{x}^2}$, and since $q>0$, we 
discover that it must be
$
 S(\theta )V(\theta)<0$. 
Eliminating $q$ from the first equation of \eqref{gattino},  we obtain 
\[
 -W_\theta^2\frac{S_\theta\abs{y}^4} {U_\theta V_\theta \abs{x}^2} =
 - \frac{U_\theta V_\theta \abs{x}^2}{S_\theta}+V_\theta^2 \abs{x}^2=-\frac{W_\theta V_\theta }{S_\theta}
 \abs{x}^2,
\]
because $W_\theta=U_\theta-S_\theta V_\theta$. Then we have
\begin{equation*}
 \frac{W(\theta)S(\theta)^2}{U(\theta)V(\theta)^2}= \frac{\abs{x}^4}{\abs{y}^4}.
\end{equation*} 
Taking the square root, and remembering that $U_\theta,W_\theta>0$ for all $\theta>0$, while, as we already observed,   the product   $S_\theta V_\theta $ must be negative,  we conclude that \eqref{equazioniamo} must hold.

  To complete the proof of \ref{itemquattro},   we obtain the explicit form of $\a',\b' $ and $\z$.
 We start  from 
\begin{equation*}  
\xi^2+\eta^2=\frac 1q=-\frac{U_\theta V_\theta }{S_\theta }\frac{\abs{x}^2}{\abs
{y}^2} =(U_\theta W_\theta )^{1/2},
\end{equation*}
where we used  \eqref{equazioniamo} in the last equality. 
Thus   $q=\dfrac{1}{(U_\theta W_\theta)^{1/2}}$,    $\xi=(U_\theta W_\theta )^{1/4}\cos\s$ and $\eta=(U_\theta W_\theta )^{1/4}\sin\s$ for some $\s\in\R$. Therefore, 
% inverting the change of variables \eqref{alexis},  and 
using  \eqref{laddo2} and \eqref{lapponie}, it  turns out that
\begin{equation}
\begin{aligned}
 \a' &=\frac{1}{(U_\theta W_\theta)^{1/4}}(y\sin\s-y^\perp\cos\s)
 \\
 \b' & =\frac{(U_\theta W_\theta)^{1/4}}{W_\theta}(y\cos\s+y^\perp\sin\s) -\frac{V_\theta}{W_\theta}x
 \\
 \z &=\frac{U_\theta}{W_\theta}x -\frac{S_\theta}{W_\theta}(U_\theta 
 W_\theta)^{1/4}(y\cos\s+y^\perp\sin\s).
\end{aligned}
\end{equation} 
Inverting  
the change of variables \eqref{alexis}, 
$
 \a=\a'\sin\theta+\b'\cos\theta
$ and $\beta=\b'\sin\theta-\a'\cos\theta$, we conclude the proof of  
part \ref{itemquattro}.

\step{Proof of \ref{itemuno}}. 
It suffices to check that, 
  given $(x,y\wedge y^\perp)\in\Sigma$,  if $\theta$ satisfies \eqref{equazioniamo},
then for all $\s\in\R$ the triple in \eqref{corresponding} is admissible and   the path $\gamma$ in \eqref{camminassimo} satisfies 
\[
 \gamma(1)=:F(\a'\sin\theta+\b'\cos\theta,\b'\sin\theta-\a'\cos\theta,\z,\theta)=(x,y\wedge y^\perp).
\]
But we have $F(\a'\sin\theta+\b'\cos\theta,\b'\sin\theta-\a'\cos\theta,\z,\theta)=G(\a',\b',\z,\theta)$ by Remark
\ref{cambiovariabile}.  Thus we must check first that $\a',\b',\z$ is admissible and second that
$G(\a',\b',\z,\theta)=(x,y\wedge y^\perp$, i.e.
\begin{equation}\label{arrabatto} 
 S_\theta\b'+\z=x\qquad\text{and}\qquad \a'\wedge (U_\theta \b'+V_\theta \z)=y\wedge y^\perp.
\end{equation} 
 To check admissibility let us  start from a triple as in \eqref{corresponding} and calculate 
 $\abs{\a'}^2  =\dfrac{\abs{y}^2}{(U_\theta W_\theta)^{1/2}}
$
 \begin{equation}\label{alpha} 
\begin{aligned}
 % \\
  \abs{\b'}^2          
&= \frac{(U_\theta W_\theta)^{1/2}}{W_\theta^2}
\abs{y}^2+\frac{V_\theta^2}{W_\theta^2}\abs{x}^2
=\text{(eliminating $\abs{y}^2$ by \eqref{equazioniamo})}
\\&=\Bigl(\frac{-U_\theta V_\theta}{S_\theta W_\theta^2}+\frac{V_\theta^2}{W_\theta^2}\Bigr)\abs{x}^2
=-\frac{V_\theta}{S_\theta W_\theta}\abs{x}^2.
 \end{aligned}
 \end{equation} 
Then $\abs{\a'}^2=\abs{\b'}^2$ if \eqref{equazioniamo} holds.

From \eqref{corresponding} it is obvious 
that $\scalar{\a'}{\z}=0=\scalar{\b'}{\a'}$.
  To  check that $\b'\perp\z$,   it suffices to observe that 
\begin{equation*}
\begin{aligned}
 \scalar{\b'}{\z} & = -\frac{S_\theta (U_\theta W_\theta )^{1/2}}{W_\theta^2}\abs{y}^2 
 -\frac{U_\theta V_\theta }{W_\theta^2 }\abs{x}^2=0,
\end{aligned}
\end{equation*}
as soon as \eqref{equazioniamo} holds. 
Next we check that the $x$-component takes the correct value,    i.e.~the first equality in 
\eqref{arrabatto}. 
\begin{equation*}
\begin{aligned}
 S_\theta \b'+\z & =
 S_\theta \Bigl[\frac{(U_\theta W_\theta)^{1/4}}{W_\theta}(y\cos\s+y^\perp\sin\s)-\frac{V_\theta}{W_\theta}x\Bigr]
\\
&\qquad +
 \Bigl[\frac{U_\theta}{W_\theta}x-\frac{S_\theta}{W_\theta}(U_\theta W_\theta)^{1/4}(y\cos\s+y^\perp\sin\s) \Bigr]
% \\&
 =\Bigl(\frac{-S_\theta V_\theta}{W_\theta}+ \frac{U_\theta}{W_\theta}\Bigr)x
=x,
%  \\&=\frac{U_\theta-s_\theta v_\theta}{W_\theta}(U_\theta W_\theta)^{1/4} (y\cos\s+y^\perp\sin\s)
% =(U_\theta W_\theta)^{1/4} (y\cos\s+y^\perp\sin\s)
\end{aligned}
\end{equation*}
as required. 
To   check the $t$-component   (second equality in \eqref{arrabatto}),  note that,   taking $\b'$ and $\z'$ as in \eqref{corresponding}, we have  
\begin{equation*}
\begin{aligned}
 U_\theta \b'+V_\theta\z & 
%  =
%  s_\theta \Bigl[\frac{(u_\theta W_\theta)^{1/4}}{w}(y\cos\s+y^\perp\sin\s)-\frac{V_\theta}{W_\theta}x\Bigr]
% \\
% &\qquad +
%  \bigl[\frac{U_\theta}{W_\theta}x-\frac{s_\theta}{W_\theta}(U_\theta W_\theta)^{1/4}(y\cos\s+y^\perp\sin\s) \bigr]
% % \\&
% %  =\Bigl(\frac{-s_\theta V_\theta}{W_\theta}+ \frac{U_\theta}{W_\theta}\Bigr)x
% % =x
%  \\&
 =\frac{U_\theta-S_\theta V_\theta}{W_\theta}(U_\theta W_\theta)^{1/4} (y\cos\s+y^\perp\sin\s)
=(U_\theta W_\theta)^{1/4} (y\cos\s+y^\perp\sin\s).
\end{aligned}
\end{equation*}
Then 
\begin{equation*}
 \a'\wedge(U_\theta\b'+V_\theta\z)=
 \frac{1}{(U_\theta W_\theta)^{1/4}}(y\sin\s -y^\perp\cos\s)  \wedge
 (U_\theta W_\theta)^{1/4} (y\cos\s+y^\perp\sin\s)=y\wedge y^\perp,
\end{equation*}
as   desired,  and the proof of \ref{itemuno} is concluded.

\step{Proof of \ref{itemdue}.} From \eqref{alpha} we already know that $ \abs{\b'}^2         =-\dfrac{V_\theta}{S_\theta W_\theta}\abs{x}^2 $.
Moreover, \begin{equation}\label{zetagreco} 
\begin{aligned}
 \abs{\z}^2& =\frac{U_\theta^2}{W_\theta^2}\abs{x}^2+\frac{S_\theta^2}{W_\theta^2}(U_\theta W_\theta)^{1/2}\abs{y}^2
 =\text{(eliminating $\abs{y}^2$ by \eqref{equazioniamo})}
%  \\&
 =\frac{U_\theta}{W_\theta}\abs{x}^2.
\end{aligned}
\end{equation}
Summing up, we find
\begin{equation*}
\begin{aligned}
\length^2(\gamma|_{[0,1]})&= \frac{\abs{x}^2}{W_\theta}\Bigl(U_\theta-\frac{V_\theta}{S_\theta}\Bigr)
=\frac{\abs{x}^2}{W_\theta}\Bigl(W_\theta+S_\theta V_\theta-\frac{V_\theta}{S_\theta}\Bigr)
\\&= \abs{x}^2-\frac{V_\theta(1-S_\theta^2)}{S_\theta W_\theta}\abs{x}^2
=\abs{x}^2+\frac{ 1-S_\theta^2} {\sqrt{U_\theta W_\theta}}\abs{y}^2,
\end{aligned}
\end{equation*}
where the last equality follows again from~\eqref{equazioniamo}.

\step{Proof of \ref{itemtre}.} It follows immediately from \eqref{zetagreco} and \eqref{alpha}.
\end{proof}

\subsection{Minimizing curves and  distance on \texorpdfstring{$\Sigma$}{Sigma}}
Here, among the controls described in Theorem \ref{unmaintheorem}, we will find the minimizing ones.
 Observe that   $\sin(\theta) V(\theta)<0$ if and only if $\theta\in \bigcup_{k=1}^\infty \mathopen]
 k \pi, 
 \phi_k\mathclose[$, where 
 $\phi_k\in\Bigl]k\pi,(k+\frac 12)\pi\Bigr[$ is the $k$-th positive zero of the function $v$.

 Differentiating the functions $S,U,V$ we find the useful formulas 
% \begin{subequations}
% \begin{align}
%  \label{derivouno} s'(\theta)&=-2V(\theta)
%  \\
% \label{derivodue}  u'(\theta)&=\frac{\cos\theta}{\theta}V(\theta)
%  \\
% \label{derivotre}  v'(\theta)&=\frac{s(\theta)}{2}-\frac{2}{\theta}V(\theta)  \text{  se serve...}
% \end{align}
% 
% \end{subequations}
\begin{equation}
 \label{derivouno} S'(\phi)=-2V(\phi),\qquad
 U'(\phi)=\frac{\cos\phi}{\phi}V(\phi),\qquad
  V'(\phi)=\frac{S(\phi)}{2}-\frac{2}{\phi}V(\phi).
\end{equation}

\begin{lemma}\label{erredue} 
 The function 
 \begin{equation*}
  R(\theta):=\frac{1-S(\theta)^2}{\sqrt{U(\theta)W(\theta)}}
 \end{equation*} is strictly increasing on $A:=\bigcup_{k=1}^\infty \mathopen]k \pi, \phi_k\mathclose[$.
\end{lemma}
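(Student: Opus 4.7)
The plan is to reduce monotonicity of $R^2=(1-S^2)^2/(UW)$ to positivity of an explicit polynomial in two variables. Since $R>0$ on $A$, it suffices to prove $(R^2)'>0$ on each component $\mathopen]k\pi,\phi_k\mathclose[$.

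I would reparametrize by $\alpha:=\tan\theta/\theta$ and $X:=\tan^2\theta$. On $\mathopen]k\pi,(k+\frac12)\pi\mathclose[$ both $\sin\theta$ and $\cos\theta$ share the sign $(-1)^k$, hence $\tan\theta>0$; together with $\tan(k\pi)=0$ and $\tan\phi_k=\phi_k$ this gives $X>0$ and $\alpha\in(0,1)$ on $\mathopen]k\pi,\phi_k\mathclose[$. Using $\cos^2\theta=1/(1+X)$, $\sin^2\theta=X/(1+X)$, and $\sin\theta\cos\theta=\alpha\theta/(1+X)$, one obtains
\[
 1-S^2=\frac{N}{1+X},\qquad 4\theta U=\frac{A_1}{1+X},\qquad 4\theta W=\frac{A_2}{1+X},
\]
with $A_1:=1-\alpha+X$, $A_2:=1+\alpha-2\alpha^2+X$, $N:=1-\alpha^2+X=\frac12(A_1+A_2)$. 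This yields the clean formula $R^2=16\theta^2 N^2/(A_1A_2)$.

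Differentiation uses $\alpha'=A_1/\theta$ (from $\sec^2\theta=1+X$) and $X'=2\alpha\theta(1+X)$. Combined with the identity $A_2+(1-4\alpha)A_1=2[(1-\alpha)^2+(1-2\alpha)X]$, one computes
\[
\theta(A_1'A_2-A_2'A_1)=-2A_1\bigl[(1-\alpha)^2+(1-2\alpha)X\bigr]+4\alpha^2(1-\alpha)\theta^2(1+X).
\]
Using $A_2-A_1=2\alpha(1-\alpha)$ and $\alpha^2\theta^2=X$, the inequality $(R^2)'>0$ reduces after standard manipulations to
\[
 A_1\bigl[NA_2+\alpha(1-\alpha)\bigl((1-\alpha)^2+(1-2\alpha)X\bigr)\bigr]>2\alpha(1-\alpha)^2 X(1+X).
\]

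For the final step one expands
\[
 NA_2+\alpha(1-\alpha)\bigl[(1-\alpha)^2+(1-2\alpha)X\bigr]=X^2+2(1-\alpha)(1+2\alpha-\alpha^2)X+(1-\alpha)^2(1+4\alpha+\alpha^2).
\]
Setting $\beta:=1-\alpha\in(0,1)$, multiplying by $A_1=\beta+X$ and collecting terms, the required inequality becomes
\[
 X^3+\beta(5-2\beta)X^2+\beta^2(8-4\beta-\beta^2)X+\beta^3(6-6\beta+\beta^2)>0.
\]
All four coefficients are strictly positive on $\beta\in(0,1)$, since $5-2\beta>3$, $8-4\beta-\beta^2>3$, and $6-6\beta+\beta^2\geq 1$. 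Because $X>0$ on $\mathopen]k\pi,\phi_k\mathclose[$, the inequality holds strictly and $R$ is strictly increasing on each such interval. The main obstacle is the algebraic bookkeeping needed for the two key identities (for $\theta(A_1'A_2-A_2'A_1)$ and for the bracket); once those are verified, monotonicity is immediate from positivity of the coefficients.
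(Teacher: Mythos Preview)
Your algebraic reduction is correct and the computation checks out: the substitution $\alpha=\tan\theta/\theta$, $X=\tan^2\theta$ does give $R^2=16\theta^2N^2/(A_1A_2)$, and your final cubic in $X$ with $\beta=1-\alpha$ has all coefficients strictly positive on $\beta\in(0,1)$, so $(R^2)'>0$ on every component $\mathopen]k\pi,\phi_k\mathclose[$.

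There is, however, a genuine gap in the sentence ``it suffices to prove $(R^2)'>0$ on each component.'' The set $A$ is a disjoint union of intervals, and ``strictly increasing on $A$'' means $R(\theta_1)<R(\theta_2)$ whenever $\theta_1<\theta_2$ lie in $A$, including the case where they sit in different components. This is exactly how the lemma is used later: the solutions $\theta_k$ of $P(\theta)=|x|^2/|t|$ lie one in each interval $\mathopen]k\pi,\phi_k\mathclose[$, and one must conclude that the smallest solution $\theta_1$ gives the smallest value of $R$. The paper's proof addresses this explicitly.

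Fortunately your own machinery closes the gap immediately. At $\theta\to k\pi^+$ one has $\alpha\to0$, $X\to0$, hence $N,A_1,A_2\to1$ and $R^2\to16k^2\pi^2$; at $\theta\to\phi_k^-$ one has $\alpha\to1$, $X\to\phi_k^2$, hence $N=A_1=A_2=\phi_k^2$ and $R^2\to16\phi_k^2$. Thus $\sup_{\mathopen]k\pi,\phi_k\mathclose[}R=4\phi_k<4(k+1)\pi=\inf_{\mathopen](k+1)\pi,\phi_{k+1}\mathclose[}R$ since $\phi_k<(k+\tfrac12)\pi$, and global monotonicity follows.

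As for the method, the paper takes a quite different route: it writes $1/R^2=F^2+FG$ with $F=U/(1-S^2)$ and $G=-SV/(1-S^2)$, exploits the identity $2F+G=1/(2\theta)$ to get $(1/R^2)'=-F/(2\theta^2)+F'G$, and then only needs the sign information $F,G>0$ and $F'<0$ on each interval (the latter via a one-line factorization). The cross-interval step uses $G(k\pi)=G(\phi_k)=0$ so that $1/R^2=F^2$ at the endpoints, and the global monotonicity of $F$. Your approach trades this structural decomposition for a direct, more mechanical computation terminating in a polynomial with positive coefficients; it is longer to verify but entirely elementary and self-contained.
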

% \textbf{PLOT}
% \begin{figure} [ht]
%  \label{figura3}
% %  \centerline{\includegraphics{funzione_s.1} } \caption{ File funzione s. Funzione $s$ su $[0,2\pi]$}
% \includegraphics{function_R.1}   \caption{A plot of the function $R$}
% \end{figure}

The function is actually increasing on the whole half-line 
$\theta>0$, but the statement of the lemma is easier to prove and it is enough for our purposes. Observe the limit behaviour  $R(\theta)\sim 4\theta\to +\infty$, as $\theta\to +\infty$.
\begin{proof}
It suffices to prove that the following function is strictly decreasing on $A$.
\begin{equation}
 f=\frac{UW}{(1-S^2)^2}=\frac{U^2}{ (1-S^2)^2} + \frac{U }{  1-S^2 }\cdot \frac{(-SV )}{1-S^2}
 =:F^2+FG.
\end{equation} 
We are omitting the variable $\theta$ in $U,V,W$.
  On  the  functions $F$ and $G$ observe that 
\begin{equation*}
 F(\theta):=\frac{U}{1-S^2}=\frac{\theta-\sin\theta\cos\theta}{4(\theta^2-\sin^2\theta)}=\frac 18
 \frac{d}{d\theta}\log(\theta^2-\sin^2\theta)=
 \frac 18
 \frac{d}{d\theta}\log(\theta^2(1-S^2 ) )
\end{equation*} 
By formula $S'=-2V$, we have
\begin{equation*}
 G (\theta):=\frac{-SV}{1-S^2}=\frac{SS'}{2(1-S^2)}=-\frac14\frac{d}{d\theta}\log(1-S^2).
\end{equation*} 
Thus, 
\begin{equation*}
 G+ 2F=\frac 14 \frac{d}{d\theta}\log(\theta^2)=\frac{1}{2\theta}.
\end{equation*} 
We will show that $f'<0$ on any interval $\mathopen]k\pi, \phi_k\mathclose[$
and that $f(\phi_k)< f((k+1)\pi)$ for all $k\in\N$.
\begin{equation}\label{girodita} 
f'= (F^2+FG)'=F(2F'+G')+ F'G= -\frac{F(\theta)}{2\theta^2}+ F'(\theta)G(\theta).  
\end{equation} 
A computation gives
% $F$ e'  positiva e monotona decrescente su $\R^+$ perche'
\begin{equation*}
\begin{aligned}
 F'(\theta)&=\frac{\sin^2 \theta(\theta^2-\sin^2 \theta)-(\theta-\sin\theta\cos\theta)^2}
 {2(\theta^2-\sin^2\theta)^2}\\
 & =-\frac{(\theta^2 \cos^2\theta+ \sin^2 \theta -2\theta \sin \theta \cos \theta)}{2(\theta^2-\sin^2\theta)^2}
%  \\ & 
=
-\frac{(\theta \cos\theta- \sin \theta)^2}{2(\theta^2-\sin^2\theta)^2}< 0,
\end{aligned}
\end{equation*}
on $  \mathopen]0,+\infty\mathclose[  \setminus\bigcup_{k\geq 1}\{\phi_k\}$. Thus $F$ is strictly decreasing on $\mathopen]0,+\infty\mathclose[  $.
Since  $G$ and $F$ are positive on each interval $\mathopen]k\pi,\phi_k\mathclose[$, 
looking at \eqref{girodita}, we conclude that $f'<0$ on each $\mathopen]k\pi,\phi_k\mathclose[$.

To conclude the proof, observe that $G(k\pi)=G(\phi_k)=0  $ for all $k\in\N$. 
Thus we get   \[
f(\phi_k)=F^2(\phi_k)> F^2 ((k+1)\pi)=f((k+1)\pi)\]
for each $k\geq 1$,
because $F$ is strictly decreasing on $\mathopen]0,+\infty\mathclose[ $.
\end{proof}

\begin{lemma}\label{piopio} 
 The function $P$ defined in \eqref{equazioniamo} is strictly increasing on $\mathopen]\pi,\phi_1\mathclose[$ and $P(\pi)=0$, $P(\phi_1-)=+\infty$.
\end{lemma}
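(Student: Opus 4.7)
My plan is to dispatch the two endpoint values directly from the definition, and then prove strict monotonicity by rewriting $P^{2}$ as a product of two elementary ratios whose derivatives are manifestly positive.

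\emph{Endpoints.} First, since $V(0)=0$ and $\phi_{1}$ is by definition the first positive zero of $V$, one has $V>0$ on $\mathopen]0,\phi_{1}\mathclose[$; on $\mathopen]\pi,\phi_{1}\mathclose[\subset\mathopen]\pi,2\pi\mathclose[$ one has $\sin\theta<0$ and $\cos\theta<0$, hence $S<0$; finally $U>0$ and $W>0$ on $\mathopen]0,+\infty\mathclose[$, the latter by Lemma~\ref{vudop}. Thus $P$ is well defined and non-negative on $[\pi,\phi_{1}[$, with $P(\pi)=0$ (because $S(\pi)=0$) and $P(\phi_{1}-)=+\infty$ (because $V(\phi_{1})=0$ while $S(\phi_{1})\ne 0$ and $U(\phi_{1}),W(\phi_{1})>0$).

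\emph{Monotonicity.} The main step is to use $W=U-SV$ from \eqref{vudoppio} to rewrite
\[
P^{2}=\frac{S^{2}W}{V^{2}U}=\frac{S^{2}}{V^{2}}-\frac{S^{3}}{VU}=r(r+s),\qquad r:=-\frac{S}{V},\quad s:=\frac{S^{2}}{U}.
\]
On $\mathopen]\pi,\phi_{1}\mathclose[$ both $r$ and $s$ are strictly positive, with $r(\pi)=s(\pi)=0$. Differentiating with the formulas \eqref{derivouno}, one gets
\[
r'=\frac{2V^{2}+\tfrac{1}{2}S^{2}-\dfrac{2SV}{\theta}}{V^{2}}
\qquad\text{and}\qquad
s'=-\frac{SV}{U^{2}}\Bigl(4U+\frac{S\cos\theta}{\theta}\Bigr),
\]
and every summand on the right is positive on $\mathopen]\pi,\phi_{1}\mathclose[$: indeed $SV<0$ gives $-2SV/\theta>0$, while $S\cos\theta>0$ (both factors are negative) gives $4U+S\cos\theta/\theta>0$. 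Hence $r',s'>0$, so $(P^{2})'=r'(r+s)+r(r'+s')>0$ on $\mathopen]\pi,\phi_{1}\mathclose[$, and since $P\ge 0$ this transfers to strict monotonicity of $P$.

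No serious obstacle appears once one spots the factorization $P^{2}=r(r+s)$, which turns what would otherwise be a single messy inequality into the two elementary sign checks $r'>0$ and $s'>0$. Without this factorization one would have to control the combined logarithmic derivative $(\log P)'=(\log(-S/V))'+\tfrac12(\log(W/U))'$ directly, where $W/U=1-SV/U$ returns to the value $1$ at both endpoints of $\mathopen]\pi,\phi_{1}\mathclose[$, so its logarithmic derivative changes sign and the required cancellation with the first piece becomes tedious.
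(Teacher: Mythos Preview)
Your proof is correct and follows essentially the same route as the paper: the same factorization $P^{2}=(-S/V)\bigl(-S/V+S^{2}/U\bigr)$ (your $r(r+s)$), the same derivative formulas for $-S/V$ and $S^{2}/U$ via \eqref{derivouno}, and the same sign checks using $SV<0$ and $\cos\theta<0$ on $\mathopen]\pi,\phi_{1}\mathclose[$. The only addition is that you spell out the endpoint values $P(\pi)=0$ and $P(\phi_{1}-)=+\infty$ more explicitly than the paper does.
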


\begin{proof}[Proof of Lemma \ref{piopio}]
  Since $P>0$ on $\mathopen]\pi,\phi_1\mathclose[$,  we shall prove that $P^2$ is increasing on $\mathopen]\pi,\phi_1\mathclose[$.
 \begin{equation*}
  P^2=\frac{WS^2}{UV^2}=\frac{(U-SV)S^2}{UV^2}=\Bigl(-\frac{S}{V}+\frac{S^2}{U}\Bigr)\cdot\Bigl(-\frac{S}
  {V}\Bigr).
 \end{equation*}
 that $V>0$ and  $S<0$ in $\mathopen]\pi,\phi_1\mathclose[ $. Thus  $SV<0$. 

To show the statement it suffices to prove that both the  positive 
functions $ -S/V$ and $S^2/U$ are increasing   
on $\mathopen]\pi,\phi_1\mathclose[ $.
Keeping \eqref{derivouno} into account, we find
\begin{equation*}
\begin{aligned}
 V^2\Bigl(\frac{-S}{V}\Bigr)' & = - S' V + SV'= 2V^2 +\frac{S^2}{2}-\frac{2SV}{\theta}>0,
\end{aligned}
\end{equation*}
because $VS<0$.
Furthermore
\begin{equation*}
 U^2\Bigl(\frac{S^2}{U}\Bigr)' =2SS'U-S^2U'=-4 VSU-S^2\frac{\cos\theta}{\theta}V> 0,
\end{equation*}
because $\cos\theta<0$ in $\mathopen]\pi,\phi_1\mathclose[$. The proof is concluded.
\end{proof}
Next we are ready to prove Theorem~\ref{secondoteorema}.   We will denote by  $P^{-1}:[0,+\infty\mathclose[\to [\pi,\phi_1\mathclose[$  the inverse function of $P\big|_{[\pi,\phi_1\mathclose[}$, which is well defined by Lemma \ref{piopio}.

\begin{proof}
 [Proof of Theorem~\ref{secondoteorema}]
Let $(x,t)\in \Sigma$. Let $\theta\in\mathopen]\pi, \frac 32\pi\mathclose[$
be the smallest solution of $P(\theta)=\frac{\abs{x}^2}{\abs{t}}$. Write $t=y\wedge y^\perp$
for some $y,y^\perp$ orthogonal and of equal length. By Theorem~\ref{unmaintheorem}, item~\ref{itemuno}, we know that for all $\sigma\in\R$ the curve $\gamma_\s$ in 
\eqref{camminassimo} satisfies $\gamma_\s(0)=(0,0)$, $\gamma_\s(1)=(x,t)$. By item \ref{itemdue} of the same theorem, its length is 
\[
 \length^2(\gamma_\s|_{[0,1]}) = \abs{x}^2+\frac{1-S(\theta)^2}{\sqrt{U(\theta)W(\theta)\,}}\abs{t}.
\]
Finally, by Lemma \ref{erredue}, $\gamma_\s$ is length minimizing on $[0,1]$. The proof is  concluded.
\end{proof}

  We are now ready to prove  the following inclusion.  
 \begin{proposition}
  \label{itemcinque}  We have the inclusion
 $\Sigma\subset \operatorname{Cut}_0$.
 \end{proposition}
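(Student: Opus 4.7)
The plan is to exhibit, for every $(x,t)\in\Sigma$, a continuous one-parameter family of \emph{distinct} length-minimizing paths from the origin to $(x,t)$, and then to invoke the standard splicing argument to deduce $(x,t)\in\Cut_0$.

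Fix $(x,t)\in\Sigma$ and write $t=y\wedge y^\perp$ with $x,y,y^\perp$ pairwise orthogonal and $\abs{y}=\abs{y^\perp}>0$. Let $\theta_1:=P^{-1}(\abs{x}^2/\abs{t})\in\mathopen[\pi,\phi_1\mathclose[$ be the minimizing parameter identified in the proof of Theorem~\ref{secondoteorema}. For every $\sigma\in\R$, Theorem~\ref{unmaintheorem}~\ref{itemuno} produces a horizontal path $\gamma_\sigma\colon[0,1]\to\F_3$ joining $(0,0)$ to $(x,t)$ whose length, by Theorem~\ref{unmaintheorem}~\ref{itemdue}, is independent of $\sigma$ and equals $\sqrt{\abs{x}^2+R(\theta_1)\abs{t}}$. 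By Theorem~\ref{secondoteorema} this equals $d((0,0),(x,t))$, so every $\gamma_\sigma$ is a length-minimizer.

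Next I would check that $\{\gamma_\sigma\}_{\sigma\in\mathopen[0,2\pi\mathclose[}$ consists of pairwise distinct curves with pairwise distinct terminal velocities $\dot\gamma_\sigma(1)$. The cleanest route is symmetry: pick a unit vector $N\in\V$ orthogonal to $\supp(t)$ with $x\in\R N$. Rotations of $\V$ about $N$ extend to automorphisms of $\F_3$ which are isometries of the subRiemannian structure; the identity $(y\cos\alpha+y^\perp\sin\alpha)\wedge(-y\sin\alpha+y^\perp\cos\alpha)=y\wedge y^\perp$ shows they fix the endpoint $(x,t)$, and inspection of~\eqref{corresponding} reveals that rotation by $\alpha$ sends $\gamma_\sigma$ to $\gamma_{\sigma+\alpha}$. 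If two members of the family coincided, the orbit of the circle action would reduce to a single curve contained in the axis $\R N$; but the control of $\gamma_\sigma$ oscillates with nonzero amplitude in the plane $N^\perp$, since $\abs{\alpha'_\sigma}=\abs{\beta'_\sigma}>0$ and these vectors have nonzero components orthogonal to $N$. The same rotational reasoning, applied to $\dot\gamma_\sigma(1)$, gives $\dot\gamma_{\sigma_1}(1)\neq\dot\gamma_{\sigma_2}(1)$ whenever $\sigma_1\not\equiv\sigma_2\pmod{2\pi}$.

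Finally, fix $\sigma_1\neq\sigma_2\pmod{2\pi}$ and suppose, for contradiction, that $\gamma_{\sigma_1}$ remained length-minimizing on some $[0,1+\varepsilon]$. Concatenating $\gamma_{\sigma_2}\bigr|_{[0,1]}$ with $\gamma_{\sigma_1}\bigr|_{[1,1+\varepsilon]}$ would yield a length-minimizer on $[0,1+\varepsilon]$ with a genuine corner at $(x,t)$. Since $(x,t)\notin\Abn_0=\V\times 0_{\wedge^2\V}$ (because $t\neq 0$), any minimizer passing through $(x,t)$ is strictly normal and hence $C^\infty$, contradicting the presence of a corner. Therefore $t_{\cut}(\gamma_{\sigma_1})=1$, which is precisely the statement $(x,t)\in\Cut_0$. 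The only delicate step is the final appeal to smoothness of normal minimizers together with the characterization of $\Abn_0$; all the other ingredients are immediate consequences of Theorems~\ref{unmaintheorem} and~\ref{secondoteorema}.
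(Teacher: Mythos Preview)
Your proof is correct and follows the same splicing strategy as the paper: produce a one-parameter family of minimizers to $(x,t)$ with distinct terminal velocities, then concatenate to obtain a non-smooth minimizer, contradicting regularity of normal extremals. The paper verifies the distinctness of $\dot\gamma_\sigma(1)$ by an explicit expansion $u_\sigma(1)=A\cos\sigma+B\sin\sigma+C$ and checks that $A=B=0$ would force $\sin\theta=\cos\theta=0$; your rotational-symmetry argument is a cleaner packaging of the same computation, since $R_\alpha$-invariance of a single $\gamma_\sigma$ forces $u_\sigma(s)\in\R N$, and the $N^\perp$-component of $u_\sigma$ has squared norm $A^2+B^2>0$ for exactly the same reason.

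One remark on the final step: the justification ``$(x,t)\notin\Abn_0$, hence any minimizer passing through $(x,t)$ is strictly normal'' is slightly roundabout. The spliced curve $\Gamma$ is a minimizer from $(0,0)$ to $\gamma_{\sigma_1}(1+\varepsilon)$, not to $(x,t)$; to make your argument go through you need that sub-arcs of abnormal extremals are abnormal (so that if $\Gamma$ were abnormal then $\Gamma|_{[0,1]}=\gamma_{\sigma_2}$ would be, forcing $(x,t)\in\Abn_0$). The paper instead invokes directly the Goh-condition fact that \emph{every} constant-speed length-minimizer in a two-step Carnot group is normal, hence smooth, which avoids this detour.
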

    The proof of the opposite inclusion will be achieved at the end of Subsection \ref{quattrouno}.

\begin{proof}[Proof of Proposition \ref{itemcinque}]
Assume by contradiction that a point $(x,t)\in \Sigma$ does not belong to $\Cut_0$.   We will show by a standard nonuniqueness argument that we can find a constant-speed 
path  $\Gamma$ which minimizes for $s\in [0,1+\e]$, such that $\Gamma(0)=0, \Gamma(1)=(x,t)$ and $\Gamma$ is not differentiable at $s=1$. This contradicts the known fact that arclength length-minimizers in two step Carnot groups are   normal and then  smooth.

To prove the claim, observe first that, since $(x,t)\in\Sigma$ does not belong to $\Cut_0$,   we can find $y,y^\perp$ and a minimizer $s\mapsto\gamma_{\s_1}(s) $ of the form   \eqref{camminassimo}, i.e. 
\[
 s\mapsto \gamma(s, \a'_{\s_1}\sin\theta +
 \b'_{\s_1}\cos\theta , \b'_{\s_1}\sin\theta-\a'_{\s_1}\cos\theta,\z ,2\theta)=:\gamma(s)=:\gamma_{\s_1}(s)
% \gamma_1(s)=\gamma(s,\a'_\s
\]
such that $\gamma_{\s_1}(1)=(x,t)$ and $\gamma_{\s_1}$ minimizes for $s\in[0,1+\e]$ for some $\e>0$. The minimizing choice of   $\theta \in\mathopen]\pi, \phi_1\mathclose[$ 
is uniquely determined, by Lemma  \ref{erredue} and Lemma  \ref{piopio}.   

Let us consider $\sigma_2\neq \s_1$ and the corresponding path  $\gamma_{\sigma_2}(s)  $  in \eqref{camminassimo}  corresponding to the same   
choice of $\theta$.
 We know that this path minimizes on $[0,1]$ for any choice of $\sigma_2\in\R$. Therefore, the new path $\Gamma$ defined as 
\[
 \Gamma(s)=\begin{cases}
\gamma_{\s_2}(s)&\text{if $s\in[0,1]$}            
\\ \gamma_{\s_1}(s)& \text{if $s\in[1,1+\e]$}
           \end{cases}
\]
is a constant-speed  length minimizer on $[0,1+\e]$.
To prove the nondifferentiability of $\Gamma$, it suffices to show that $\dot\gamma_1(1)\neq\dot\gamma_2(1)$. Since $\gamma_{\s_1}(1)=\gamma_{\s_2}(1)$, this is equivalent to saying that
 $u_{\s_1}(1)\neq u_{\s_2}(1)$ (see \eqref{malafemmina}).
But, letting $\gamma_\s=(x_\s,t_\s)$ and $\dot x_\s=:u_\s$, 
 a computation shows that for $\sigma\in\R$ we have
\[
\begin{aligned}
 u_\s(1)&=\cos(2\theta)(\a'_\s\sin\theta+\b'_\s\cos\theta)
 +\sin(2\theta)
 (\b'_\s\sin\theta-\a'_\s\cos\theta)  +\z_\s
 \\&
 = -\a'_\s\sin\theta+\b'_\s\cos\theta +\z_\s  
 \\&
   = -\frac{1}{ (U_\theta W_\theta)^{1/4} }(y\sin\s-y^\perp\cos\s)\sin\theta +
   \Bigl(\frac{(U_\theta W_\theta)^{1/4}}{W_\theta} (y\cos\s+y^\perp\sin\s)-\frac{V_\theta}{W_\theta}x\Bigr)
   \cos\theta 
\\&\qquad 	
+ \Bigl(\frac{U_\theta}{W_\theta}x -\frac{S_\theta}{W_\theta}(U_\theta W_\theta)^{1/4}  
(y\cos\s+y^\perp\sin\s)\Bigr)
.
\end{aligned}
\]
The proof will be concluded as soon as we  show that $\sigma\mapsto u_\s(1)$ is a nonconstant function.  

To prove the claim, 
write 
\[
\begin{aligned}
u_\s(1)&= \Bigl(\frac{\sin\theta}{(U_\theta W_\theta)^{1/4}}y^\perp +\frac{\cos\theta}{W_\theta}
 (U_\theta W_\theta)^{1/4} y  -\frac{ S_\theta}{W_\theta}(U_\theta W_\theta)^{1/4}y
\Bigr)\cos\sigma
\\&\qquad  +
 \Bigl(  \frac{\cos\theta}{W_\theta}
 (U_\theta W_\theta)^{1/4} y^\perp 
 -
 \frac{\sin\theta}{(U_\theta W_\theta)^{1/4}}y
-\frac{S_\theta }{W_\theta} (U_\theta W_\theta)^{1/4}y^\perp
 \Bigr)\sin\sigma
\\&\qquad -\cos\theta\frac{V_\theta}{W_\theta}x 
  + \frac{U_\theta}{W_\theta}x
\\& 
=: A\cos\sigma + B\sin\sigma + C.
  \end{aligned}
\]
It is easy to see that a function $H(\sigma)= A\cos\sigma + B\sin\sigma + C$ where $A,B,C $ are given vectors is constant if and only if $A=B=0$. Therefore, since $y\perp y^\perp$,
the function $\sigma\mapsto u_\s(1)$ is constant if and only if 
\[
\begin{aligned}
 \frac{(\cos\theta-S(\theta))}{W_\theta}
 (U_\theta W_\theta)^{1/4}=0 \qquad\text{and}\qquad 
 \frac{\sin\theta}{(U_\theta W_\theta)^{1/4}}=0.
\end{aligned}
\]
But this is impossible, because these condition would imply that $\cos\theta=\sin\theta=0$
(recall that $U_\theta>0$ and $W_\theta>0$ for all $\theta>0$).
\end{proof}

\begin{remark} \label{multipli}  Observe that in the argument of the proof   above we  have shown by explicit computation that for 
any point $(x,t)\in \Sigma$, we can connect the origin with $(x,t)$ with multiple minimizing paths with  different tangent vectors at $(x,t)$. Thus, by a standard argument, one can conclude that  the distance from the origin cannot be differentiable at any such $(x,t)$. 
 In Section \ref{sfogliatella}, we will prove some very precise corner-like estimates at cut points. 
\end{remark}

% \subsection{The classes \texorpdfstring{$\mathcal{L}(\gamma)$}{Lg}}

%  \begin{corollary}
%   The function $h_\cut$ has the form
%   \begin{equation*}
%    h_\cut(\mu)= -
%   \end{equation*}
%  \end{corollary}

 \section{Calculation of \texorpdfstring{$h_\cut$}{hcut} and consequences}\label{sec4}
 In the first part of  this  section (Subsection \ref{quattrouno}) we calculate the cut time for any given extremal and we describe the cut locus. In Subsection \ref{quattrocentodue} we describe the profile of subRiemannian spheres.

\subsection{Cut  time  and cut locus}
\label{quattrouno}

 \begin{lemma}\label{quoziente} 
  The function
  \begin{equation*}
  Q(\theta):=-\frac{U(\theta)S(\theta)}{V(\theta)}
 \end{equation*} 
  is positive and  strictly increasing on $\mathopen[\pi,\phi_1\mathclose[ $. Furthermore, $Q(\pi+)=0$ and $Q(\phi_1-)=+\infty$.
 \end{lemma}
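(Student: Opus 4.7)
The plan is to handle positivity and boundary values by inspection, then prove strict monotonicity by a direct computation of $V^2 Q'$ that simplifies via a slick identity.

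First, positivity and the two limit values are essentially free. On $[\pi,\phi_1\mathclose[$ we have $U(\theta)>0$ for all $\theta>0$ (immediate from the definition), while $V(\theta)>0$ on $\mathopen[\pi,\phi_1\mathclose[$ with $V(\phi_1)=0$ (by definition of $\phi_1$), and $S(\theta)=\sin\theta/\theta\le 0$ on $[\pi,\phi_1\mathclose[\subset[\pi,\tfrac{3}{2}\pi\mathclose[$ with equality only at $\theta=\pi$. Hence $Q=-US/V\ge 0$, with $Q(\pi)=0$ since $S(\pi)=0$. For the limit at $\phi_1$, note that $\sin\phi_1\ne 0$ (if $\tan\phi_1=\phi_1$ forced $\sin\phi_1=0$ then also $\cos\phi_1=0$, impossible), so $U(\phi_1)S(\phi_1)\ne 0$ while $V(\theta)\to 0^+$, giving $Q(\phi_1-)=+\infty$.

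For strict monotonicity, the plan is to compute $V^2 Q'$ directly from the derivative formulas \eqref{derivouno}. Using $Q=-US/V$, a straightforward calculation gives
\[
 V^2 Q' = -(U'S+US')V + USV'
 = 2UV^2 + \frac{US^2}{2} - \frac{2USV}{\theta} - \frac{V^2 S\cos\theta}{\theta},
\]
after substituting $U'=(\cos\theta/\theta)V$, $S'=-2V$ and $V'=S/2-2V/\theta$. The main obstacle is that the last term has the wrong sign (it is negative on $\mathopen]\pi,\phi_1\mathclose[$, since $S<0$ and $\cos\theta<0$ force $S\cos\theta>0$), so one needs to combine it with the term $-2USV/\theta$ to get something manageable.

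The key identity that unlocks this is
\[
 2U(\theta)+V(\theta)\cos\theta \;=\; \frac{\sin^2\theta}{2\theta} \;=\; \frac{\theta S(\theta)^2}{2},
\]
which follows from a one-line direct computation from the definitions of $U,V,S$. Factoring $-VS/\theta$ from the last two terms of $V^2 Q'$ and applying this identity, I would obtain the clean expression
\[
 V^2 Q'(\theta) \;=\; 2U(\theta)V(\theta)^2 + \frac{U(\theta)S(\theta)^2}{2} - \frac{V(\theta)S(\theta)^3}{2}.
\]
Each of the three summands is nonnegative on $[\pi,\phi_1\mathclose[$: $U,V>0$ always, and $S\le 0$ makes $-VS^3/2\ge 0$; moreover $2UV^2>0$ strictly (even at $\theta=\pi$, where $S=0$). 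Hence $V^2Q'>0$, so $Q'>0$ throughout $[\pi,\phi_1\mathclose[$, concluding that $Q$ is strictly increasing there.
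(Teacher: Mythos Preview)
Your proof is correct. The positivity and boundary-value arguments are fine, and your computation of $V^2Q'$ is accurate: after applying the identity $2U+V\cos\theta=\theta S^2/2$ one indeed obtains
\[
 V^2Q'=2UV^2+\tfrac12US^2-\tfrac12VS^3,
\]
which is strictly positive on $[\pi,\phi_1\mathclose[$ since $U,V>0$ and $S\le 0$ there.

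The paper takes a different and somewhat shorter route. It rewrites
\[
 2Q(\theta)=\bigl(-S(\theta)(\theta-\sin\theta\cos\theta)\bigr)\cdot\frac{1}{\sin\theta-\theta\cos\theta}=:f(\theta)\cdot\frac{1}{g(\theta)},
\]
and then checks separately that $g$ is positive and strictly decreasing on $\mathopen]\pi,\phi_1\mathclose[$ (since $g'=\theta\sin\theta<0$ there) while $f$ is positive and strictly increasing (since $f'=-S'(\theta-\sin\theta\cos\theta)-2S\sin^2\theta$ with $-S'=2V>0$ and $-S\ge 0$). Monotonicity of $Q$ then follows as a product of a positive increasing factor with a positive increasing factor $1/g$. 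The paper's factorization avoids computing $Q'$ altogether and sidesteps the need for your algebraic identity; on the other hand, your approach is more systematic and yields an explicit closed-form for $V^2Q'$ that makes the sign transparent.
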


 \begin{figure} [ht]
 \label{figura2}
%  \centerline{\includegraphics{funzione_s.1} } \caption{ File funzione s. Funzione $s$ su $[0,2\pi]$}
 \centerline{\includegraphics{function_Q}}  \caption{Plot  of the function $Q$  for $\theta\in\mathopen]0,\phi_1\mathclose[$.}
\end{figure}

The function $Q$ is actually increasing on the whole interval $[0,\phi_1\mathclose[$, but we need (and prove) such property only in the sub-interval $[\pi,\phi_1\mathclose[$. See the plot in Figure~\ref{figura2}.
\begin{proof}[Proof of Lemma \ref{quoziente}]
 Write
 \[
2  Q(\theta)= ( -S(\theta)(\theta-\sin\theta\cos\theta)  )\cdot \frac{1}{\sin\theta-\theta\cos\theta}=: f(\theta)\cdot\frac{1}{g(\theta)}.
 \]
The function $g$ is positive and decreasing on $\mathopen]\pi,\phi_1\mathclose[\subset
\mathopen]\pi,3/2\pi\mathclose[$, because $g'=\theta\sin\theta<0$.
The function $f$ is instead positive and increasing. To check this property,  observe that
\[
 f'(\theta)=-S'(\theta)(\theta-\sin\theta\cos\theta)-S(\theta)(1-\cos^2\theta+\sin^2\theta)>0,
\]
because $-S'(\theta)=2V(\theta)>0$.   This concludes the proof. 
\end{proof}
 We restate Theorem \ref{teo1.3} as follows. 
 \begin{theorem}[Calculation of the cut time]\label{trentadue} 
  Let $Q$ be the function defined in Lemma \ref{quoziente} and in \eqref{RRR}. Let $h_\cut$
be the function in Proposition \ref{ticutto}. Then
 we have 
\begin{equation}\label{ticotto}
 h_\cut(\mu)= Q^{-1}(\mu^2)\qquad \forall\mu\in\mathopen]0,+\infty\mathclose[.
\end{equation} 
% \item
As a consequence, if    $a,b,z$ is 
any admissible triple, $\phi> 0$ and we consider the corresponding path 
 $s\mapsto \gamma(s,a,b,z,2\phi)=:\gamma(s)$,  the path~$\gamma$
 minimizes up to the time 
 \begin{equation}\label{risotto}
  t_\cut (a,b,z,\phi)=\frac{h_\cut(\abs{z}/\abs{a})}{\phi}=\frac{Q^{-1}(\abs{z}^2/\abs{a}^2)}{\phi}
 \end{equation}
and not further. Finally
 \begin{equation}\label{tutte}
  \gamma(t_\cut,a,b,z, 2\phi )\in\Sigma\quad\text{for all admissible $a,b,z$ and $\phi>0$}.
 \end{equation} 
%  \end{enumerate}
\end{theorem}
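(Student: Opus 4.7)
\textbf{Proof plan for Theorem \ref{trentadue}.}

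By Proposition \ref{ticutto}, identities \eqref{ticotto} and \eqref{risotto} are equivalent, so it suffices to fix an admissible triple $a,b,z$ and $\phi>0$ and identify the cut time of $s \mapsto \gamma(s,a,b,z,2\phi)$. Set
\[
T_0 := \frac{Q^{-1}(|z|^2/|a|^2)}{\phi}, \qquad \theta := \phi T_0 = Q^{-1}(|z|^2/|a|^2) \in \mathopen[\pi,\phi_1\mathclose[,
\]
where $Q^{-1}$ is the inverse of the monotone bijection $Q|_{\mathopen[\pi,\phi_1\mathclose[}$ from Lemma \ref{quoziente}. I will establish (a) $\gamma(T_0) \in \Sigma$, (b) $\gamma|_{[0,T_0]}$ is length-minimizing, and (c) $\gamma$ fails to minimize on $[0, T_0+\e]$ for any $\e>0$; together these yield $t_\cut(a,b,z,\phi) = T_0$ and \eqref{tutte}.

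For (a), the reparametrization/dilation identities \eqref{riparametrizzo}--\eqref{dilato} give $\gamma(T_0, a, b, z, 2\phi) = F(T_0 a, T_0 b, T_0 z, \theta)$, which by Remark \ref{cambiovariabile} equals $G(\alpha', \beta', T_0 z, \theta)$ with $\alpha' := T_0(a\sin\theta - b\cos\theta)$ and $\beta' := T_0(a\cos\theta + b\sin\theta)$. The explicit form of $G$ then gives the endpoint $(x,t)$ with $x = S(\theta)\beta' + T_0 z$ and $t = \alpha' \wedge (U(\theta)\beta' + V(\theta) T_0 z)$, whence $\supp t = \Span\{\alpha',\, U(\theta)\beta' + V(\theta) T_0 z\}$. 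The orthogonality $\langle x, \alpha'\rangle = 0$ follows from $\alpha' \in \Span\{a,b\}$ together with $\alpha' \perp \beta'$ and $\alpha' \perp z$, while
\[
\langle x,\, U(\theta)\beta' + V(\theta) T_0 z \rangle = T_0^2\bigl(S(\theta)U(\theta)|a|^2 + V(\theta)|z|^2\bigr)
\]
vanishes precisely because $Q(\theta) = -S(\theta)U(\theta)/V(\theta) = |z|^2/|a|^2$ by construction of $T_0$; hence $\gamma(T_0)\in\Sigma$. For (b), Theorem \ref{unmaintheorem}(iv) says that after rescaling $\gamma|_{[0,T_0]}$ to the parameter interval $[0,1]$, it is one of the extremals described in Theorem \ref{unmaintheorem}(i) for this value of $\theta$, and by Lemma \ref{piopio} our $\theta$ is the unique element of $\mathopen[\pi,\phi_1\mathclose[$ solving $P(\theta) = |x|^2/|t|$. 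Theorem \ref{unmaintheorem}(ii) evaluates $\length^2(\gamma|_{[0,T_0]}) = |x|^2 + R(\theta)|t|$, matching $d(0,\gamma(T_0))^2$ by Theorem \ref{secondoteorema}, so $\gamma|_{[0,T_0]}$ is minimizing.

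For (c), suppose for contradiction $\gamma$ minimizes on $[0, T_0+\e]$ for some $\e>0$. By Theorem \ref{unmaintheorem}(i) there is a one-parameter family $\{\gamma_\sigma\}_{\sigma\in\R}$ of extremals from the origin to $\gamma(T_0)\in\Sigma$, all of equal length by (ii) and hence, combined with (b), all minimizers on $[0,1]$. Our $\gamma|_{[0,T_0]}$ corresponds, after rescaling to $[0,1]$, to some parameter $\sigma_0$; choose any $\sigma'\neq\sigma_0$ with $u_{\sigma'}(1) \neq u_{\sigma_0}(1)$ (possible by the proof of Proposition \ref{itemcinque}, which shows $\sigma \mapsto u_\sigma(1) = A\cos\sigma + B\sin\sigma + C$ with $A,B$ nonzero). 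Concatenating $\gamma_{\sigma'}$ (rescaled to $[0,T_0]$) with $\gamma|_{[T_0, T_0+\e]}$ gives a constant-speed curve $\Gamma$ of total length $d(0,\gamma(T_0)) + \length(\gamma|_{[T_0,T_0+\e]}) = d(0,\gamma(T_0+\e))$, so $\Gamma$ is itself minimizing. Yet its one-sided tangent vectors at $s=T_0$ are distinct (proportional to $u_{\sigma'}(1)$ and $u_{\sigma_0}(1)$ respectively), so $\Gamma$ is not smooth, contradicting the fact that arclength minimizers in two-step Carnot groups are normal and hence smooth. Therefore $t_\cut(a,b,z,\phi) = T_0$, completing \eqref{ticotto}, \eqref{risotto}, and \eqref{tutte}. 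The main obstacle is step (c): lacking a direct conjugate-point analysis, one must leverage the nonuniqueness mechanism of Proposition \ref{itemcinque} together with (b) to preclude any minimizing extension beyond $T_0$; steps (a) and (b) are direct calculations on top of Remark \ref{cambiovariabile} and Theorems \ref{unmaintheorem}--\ref{secondoteorema}.
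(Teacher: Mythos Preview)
Your proof is correct and uses the same ingredients as the paper, but the organization differs. The paper first establishes \eqref{ticotto} by running the argument in the opposite direction: it starts from an arbitrary $\theta\in\mathopen]\pi,\phi_1\mathclose[$, picks any $(x,y\wedge y^\perp)\in\Sigma$ with $P(\theta)=|x|^2/|y|^2$, and then invokes Proposition~\ref{itemcinque} ($\Sigma\subseteq\Cut_0$) as a black box to conclude that the specific curves $\gamma_\sigma$ built in Theorem~\ref{unmaintheorem}\ref{itemuno} have cut time exactly~$1$; item~\ref{itemtre} of the same theorem then gives $h_\cut(\sqrt{Q(\theta)})=\theta$, and the functional form of $h_\cut$ from Proposition~\ref{ticutto} propagates this to all extremals. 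Only afterwards does the paper verify \eqref{tutte} via the orthogonality computation you carry out in step~(a). Your route instead handles an arbitrary curve directly: step~(a) coincides with the paper's proof of \eqref{tutte}; step~(b) appeals to Theorem~\ref{secondoteorema} to match length with distance; and step~(c) re-runs the concatenation/corner argument that the paper had already packaged inside Proposition~\ref{itemcinque}. The paper's version is thus a bit more modular (Proposition~\ref{ticutto} plus Proposition~\ref{itemcinque} do most of the work), while yours is more self-contained and does not rely on the reduction to $h_\cut$.
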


%  Cancellerei la remark in footnote perch\'e diciamo la stessa cosa e anche meglio nell'introduzione. Dimmi se sei d'accordo\color{black}\footnote{\begin{remark}
%  The cut time has a ``discontinuous'' behaviour. Indeed,   fixed  admissible vectors $a,b,z$ and $\phi>0$,  
% we have \[
%   t_\cut(\e a,\e b, z,\phi)\leq \frac{\phi_1}{\phi},\quad\text{while}\quad 
%     t_\cut(0,0,z,\phi)=+\infty
%  \]
% %Such discontinuity can be seen from the Hamiltonian point of view as follows: if 
% %\[
% %H(x,t,\xi,\tau) :=\frac 12 \sum_{j=1}^3 \scalar{X_j(x,t)}{(\xi,\tau)}^2
% %\]
% %denotes the subRiemannian Hamiltonian and $(\xi,\tau)\mapsto \Exp(\xi,\tau)$ denotes the subRiemannian exponential, then denoting with $T_\cut(\xi,\tau)$ the cut time of the extremal curve with initial covector $(\xi,\tau)$, it turns out that the function $T_\cut$ is smooth on $\{(\xi,\tau):\Exp (\xi,\tau)(\Abn_0)$
% \end{remark}
% }

\begin{remark}\label{quattrocchi} 
 $\Cut_0$ is a four dimensional smooth submanifold of the six-dimensional 
$\F_3$. Indeed it can be parametrized as follows
 \begin{equation*}
  \Cut_0=\Bigl\{ (\lambda x  e_1+\lambda y e_2+\lambda z e_3, z e_1\wedge e_2 -y e_1\wedge e_3+x e_2\wedge e_3 )
  : \lambda\in\R\quad x^2+y^2+z^2>0\Bigr\}
 \end{equation*}
and it is easy to check that the map
$
\Phi(\lambda,x,y,z) =(\lambda x,\lambda y,\lambda z,x,y,z)\in\R^6
$
has full rank on the open set $\{(\lambda,x,y,z): \lambda\in\R\quad x^2 + y^2 + z^2 >0\}\subset \R^4$.  By Theorem~\ref{secondoteorema}, we see that the restriction of the distance from the origin to the cut locus is smooth. 
\end{remark}

\begin{proof}
 [Proof of Theorem \ref{trentadue}]
 We first prove that $h_\cut(\mu)=Q^{-1}(\mu^2) $. Start 
 with an arbitrary $\theta\in\mathopen]\pi,\phi_1\mathclose[$. Choose any 
 $(x,y\wedge y^\perp)\in\Sigma$ such that $P(\theta)=\frac{\abs{x}^2}{\abs{y}^2}$. By Theorem  \ref{unmaintheorem}   and Lemma \ref{erredue},  we may choose (infinitely many) admissible triples
 $\a',\b',\z$  such that the path  $\gamma_\s$ 
 in \eqref{camminassimo}   satisfies $\gamma_\s(1)=(x,y\wedge y^\perp)$, 
 minimizes length up to time $t_\cut=1$ and not further  (we already know from Proposition \ref{itemcinque} that $\Sigma\subseteq \Cut_0$). 
 Thus,
 \begin{equation*}
\begin{aligned}
  1 & =t_\cut(
  \a'\sin\theta +\b'\cos\theta , \b'\sin\theta-\a'\cos\theta,\z,\theta)
%   \\&=
=\frac{  h_\cut(\abs{\z}/\abs{\a'})}{\theta},
\end{aligned}
 \end{equation*}
where last equality follows from Proposition~\ref{ticutto}.
Then $ h_\cut(\abs{\z}/\abs{\a'})=\theta$. By item \ref{itemtre} of Theorem \ref{unmaintheorem} we get
$
 h_\cut(\sqrt{Q(\theta)})=\theta
$. Since $\theta\in \mathopen]\pi,\phi_1\mathclose[$ is arbitrary, Lemma \ref{quoziente} gives the conclusion $h_\cut(\mu)=Q^{-1}(\mu^2)$ for all $\mu\in\mathopen]0,+\infty\mathclose[$.
Thus we have proved \eqref{ticotto} and \eqref{risotto}.

Next we prove \eqref{tutte}. Let $a,b,z$ be any admissible triple and let $\phi>0$. 
Recall that the admissibility condition ensures that   $\abs{a}=\abs{b}> 0$. 
By the first part of the theorem, we know that 
 \begin{equation*} 
    t_\cut (a,b,z,\phi)=\frac{h_\cut(\abs{z}/\abs{a})}{\phi} 
    = \frac{Q^{-1}(\abs{z}^2/\abs{a}^2)}{\phi}.
 \end{equation*} 
  To prove \eqref{tutte},  we claim that,   under the choice
\begin{equation}
\theta : = Q^{-1}(\abs{z}^2/\abs{a}^2), \label{trecinque}
\end{equation}  
(i.e.~$t_\cut(a,b,z,\phi)=:\frac{\theta}{\phi}$) we have $
  \gamma\Bigl(\frac{\theta}{\phi}, a,b,z,2\phi\Bigr)\in\Sigma$. 

  To prove the claim, note that  the invariance property \eqref{riparametrizzo} gives
\begin{equation*}
\begin{aligned}
   \gamma\Bigl(\frac{\theta}{\phi}, a,b,z,2\phi\Bigr) &=
   \gamma\bigg(1, \frac{\theta}{\phi}a,\frac{\theta}{\phi}b,\frac{\theta}{\phi}z,2\theta\bigg)
  =F\bigg(\frac{\theta}{\phi}a,\frac{\theta}{\phi}b,\frac{\theta}{\phi}z,\theta\bigg),
\end{aligned}
\end{equation*}
by the definition of $F$.
Since the set $\Sigma$ is dilation invariant, the latter point belongs to $\Sigma$ if and only if its $\frac{\phi}{\theta}$-dilated 
$F(a,b,z,\theta)$ belongs to $\Sigma$. Here we need the dilation invariance property of 
$F$. \footnote{That is, the property $F(r\a,r\b,r\z,\phi)=\delta_r F(\a,\b,\z,	\phi)$ for all $r>0$, $\a,\b,\z$ admissible and $\phi>0$. }
But 
\begin{equation*}
\begin{aligned}
F(a,b,z,\theta)&= 
\Bigl(S_\theta (a\cos\theta+b\sin\theta)+z\,,\;
 U_\theta a\wedge b+ V_\theta (a\sin\theta -b\cos\theta)
 \wedge z\Bigr)
%  \\&=
% \Bigl(s_\theta (a\cos\theta+b\sin\theta)+z\,,\;
% (a\sin\theta -b\cos\theta)
% \wedge \bigl\{
% U_\theta (a\cos\theta+b\sin\theta)+V_\theta z\bigr\}\Bigr)
\\&
=:\Bigl(S_\theta b' +z\,,\;
a'
\wedge \bigl\{
U_\theta b' +V_\theta z\bigr\}\Bigr),
\end{aligned}
\end{equation*}
where $a',b'$ are defined \eqref{cambiovaluta} and we keep Remark~\ref{cambiovariabile} into account.
 Since $a',b',z$ is an admissible triple, we have for free that $S_\theta b'+z\perp a'$, and ultimately the point $F(a,b,z,\theta)$ belongs to $\Sigma$ if and only if $S_\theta b' +z\perp U_\theta b' +V_\theta z$, i.e. if 
\begin{equation*}
 U_\theta S_\theta \abs{b}^2+V_\theta\abs{z}^2=0, 
\end{equation*}
which holds true because our choice of $\theta$ in \eqref{trecinque} 
\end{proof}

Theorem \ref{trentadue} shows that, if $a,b,z$ is admissible and $\phi>0$, then 
%  (Frase da mettere prima dell'enunciato del teorema?)
\begin{equation}\label{minimizzando} 
 d(0,\gamma(s,a,b,z,2\phi))=s\sqrt{\abs{a}^2+\abs{z}^2}\quad\text{ if  }\quad 0\leq  s\leq \frac{1}{\phi}h_\cut
 \Bigl(\frac{\abs{z}}{\abs{a}}\Bigr).
\end{equation} 
% and proves that the cut locus agrees with the set $\Sigma$.

 Finally  we are in a position to prove Theorem \ref{teo1.1}. 
\begin{proof}[Proof of Theorem \ref{teo1.1}]
Theorem \ref{trentadue}  and in particular formula  \eqref{tutte} imply easily the inclusion
$\Cut_0\subset\Sigma$. The opposite inclusion has been proved in Proposition \ref{itemcinque}. 
\end{proof}

\subsection{Profile of the unit sphere}\label{quattrocentodue}
  Let for all admissible $\a,\b,\z$  and for any $\theta>0$ 
\begin{equation*}
 G(\a,\b,\z,\theta):=\Bigl( S(\theta)\b+\z\;,\;\a\wedge(U(\theta)\b + V (\theta)\z)\Bigr).
\end{equation*} 
The profile of the subRiemannian sphere can be described as follows
\begin{corollary}[Profile of the   subRiemannian  sphere]\label{proffilo} 
  Let $S(0,r):=\{(x,t)\in\F_3 :d(x,t)=r\}$  be the subRiemannian sphere of radius $r$. Then 
 \begin{equation*}
\begin{aligned}
S(0,r):& 
%  =\{(x,t): d((0,0),(x,t))=1\}
% \\&=
=\Big\{ G (\a,\b,\z,\theta): \a,\b,\z \text{ admissible triple  with } \abs{\a}^2+\abs{\z}^2=  r^2
\\&\qquad \qquad \qquad \quad\text{and}\quad 0\leq\theta\leq
h_\cut\bigl( \abs{\z}/\abs{\a } \bigr)    \Big\}.     
\end{aligned}
\end{equation*}
\end{corollary}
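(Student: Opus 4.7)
The approach is to recognize $G(\a,\b,\z,\theta)$ as the endpoint at $s=1$ of a normal extremal curve. By Remark \ref{cambiovariabile}, $G(\a,\b,\z,\theta)=F(a,b,z,\theta)=\gamma(1,a,b,z,2\theta)$ where $a,b$ are obtained from $\a,\b$ by inverting the orthogonal change of variables \eqref{cambiovaluta}; this transformation preserves admissibility and satisfies $\abs{a}=\abs{\a}$, $\abs{b}=\abs{\b}$. Hence the two constraints $\abs{\a}^2+\abs{\z}^2=r^2$ and $\theta\leq h_\cut(\abs{\z}/\abs{\a})$ are equivalent to $\abs{a}^2+\abs{z}^2=r^2$ and $\theta\leq h_\cut(\abs{z}/\abs{a})$.

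For the inclusion of the right-hand side into $S(0,r)$: the control $u(s)=a\cos(2\theta s)+b\sin(2\theta s)+z$ of such an extremal has constant modulus $\sqrt{\abs{a}^2+\abs{z}^2}$ by the pairwise orthogonality and $\abs{a}=\abs{b}$, so the curve has length exactly $\sqrt{\abs{a}^2+\abs{z}^2}=r$ on $[0,1]$. Using \eqref{risotto}, the condition $\theta\leq h_\cut(\abs{z}/\abs{a})$ is equivalent to $t_\cut(a,b,z,\theta)\geq 1$. Theorem \ref{trentadue} then guarantees that this extremal minimizes length on $[0,1]$, whence $d(0,G(\a,\b,\z,\theta))=r$.

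For the reverse inclusion, pick $p\in S(0,r)$ with $p\neq 0$, and let $\gamma\colon[0,1]\to\F_3$ be a constant-speed length-minimizing curve from $0$ to $p$ of length $r$. By the discussion in Subsection \ref{jjmm} any such curve is a normal extremal, so $\gamma(s)=\gamma(s,a,b,z,2\phi)$ for an admissible triple $a,b,z$ and some $\phi\geq 0$ with $\sqrt{\abs{a}^2+\abs{z}^2}=r$. Minimality on $[0,1]$ forces $\phi\leq h_\cut(\abs{z}/\abs{a})$ via Theorem \ref{trentadue}. Applying \eqref{cambiovaluta} then produces admissible $\a,\b$ with $p=G(\a,\b,z,\phi)$ satisfying the required constraints.

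The main technical point is the degenerate regime $\phi=0$, corresponding to constant controls $u\equiv a+z$ whose endpoints lie in the abnormal stratum $\Abn_0=\V\times\{0\}$. Since $S(0)=1$ and $U(0)=V(0)=0$, one has $G(\a,\b,\z,0)=(\b+\z,0)$; every endpoint $(w,0)$ with $\abs{w}=r$ admits an orthogonal decomposition $w=\b+\z$ with $\b\perp\z$ (so that $\abs{\b}^2+\abs{\z}^2=r^2$), completed by $\a$ orthogonal to both with $\abs{\a}=\abs{\b}>0$. This shows that the $\theta=0$ slice of the parametrization covers the abnormal part of $S(0,r)$, and the two inclusions combine to give the claimed identity.
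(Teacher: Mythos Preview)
Your proof is correct and follows essentially the same strategy as the paper. Both arguments rest on the identification $G(\a,\b,\z,\theta)=F(a,b,z,\theta)=\gamma(1,a,b,z,2\theta)$ from Remark~\ref{cambiovariabile}, on the characterization of constant-speed length-minimizers as normal extremals, and on the cut-time formula of Theorem~\ref{trentadue}. The paper compresses the double inclusion into a chain of set equalities using the reparametrization invariance~\eqref{riparametrizzo} (absorbing the time parameter $s$ into the triple), whereas you spell out the two inclusions separately and treat the degenerate case $\theta=0$ explicitly; the content is the same.
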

\begin{proof}
  By dilation invariance we may assume $r=1$.  Start from \eqref{minimizzando} and observe that
 \[
\begin{aligned}
  S(0,1)& =\bigl\{\gamma(s,a,b,z,2\phi)\mid s^2(\abs{a}^2+\abs{z}^2)=1 
  \quad\text{and}\quad 0<s\leq  \phi^{-1}
  h_\cut \bigl( \abs{z}/ \abs{a}\bigr) \bigr\}.
 \\ &
=\bigl\{F( sa,sb,sz,s\phi)\mid s^2(\abs{a}^2+\abs{z}^2)=1 
  \quad\text{and}\quad 0<s\leq  \phi^{-1}
  h_\cut \bigl( \abs{z}/ \abs{a}\bigr) \bigr\}.
 \\ &
=\bigl\{F( \a,\b,\z,\theta)\mid  \abs{\a}^2+\abs{\z}^2=1 
  \quad\text{and}\quad 0\leq \theta \leq   
  h_\cut \bigl( \abs{\z}/ \abs{\a}\bigr) \bigr\}
  \end{aligned}
 \]
and the required thesis follows. In the last chain of  equalities, $a,b,z$ and $\a,\b,\zeta$ are always admissible triples.
 % Observe that
% \[
%   \gamma(s,a,b,z,2\phi)=\gamma(1,sa,sz,2s\phi)=F(sa,sb,sz,2s\phi).
% \]
% Thus,  
\end{proof}

%\end{document}

\section{Some remarks on the Hamiltonian point of view}\label{sec5}
 In this section we consider the Hamiltonian point of view. In particular we will calculate the cut time as an explicit function of the initial covector.  
In order to look   at  the Hamiltonian point of view, it is convenient to represent the group $\F_3$ in the form $\R^3_x\times \R^3_t$ with the group law
\begin{equation}\label{crossiamo} 
 (x,t)\circ (x',t')=\Bigl(x+x', t+t'+\frac 12 x\times x'\Bigr),
\end{equation}
where $\times $ denotes the standard cross product in $\R^3$. This law is not scalable to higher dimensional cases, but it is rather 
convenient in the rank three case (compare \cite{MartiniMuller13}).
  
The standard basis of horizontal vector fields is $Y_j=\Bigl(e_j,\frac 12 x\times e_j\Bigr)\in\R^3\times \R^3$, for $j=1,2,3$. Namely
\[
 Y_1=\p_{x_1}+\frac 12 x_3 \p_{t_2}-\frac 12 x_2\p_{t_3},
 \quad
 Y_2=\p_{x_2} -\frac 12 x_3\p_{t_1}+\frac 12 x_1\p_{t_3},
 \quad 
 Y_3 = \p_{x_3}+\frac 12 x_2\p_{t_1}-\frac 12 x_1\p_{t_2}.
\]
Commutation relations
take the form $
 [Y_1, Y_2]=\p_{t_3},$ $[Y_1, Y_3]= -\p_{t_2},$  and $[Y_2,  Y_3]=\p_{t_1}$. 
If we  denote by $d$ the distance generated by $Y_1,Y_2, Y_3$ and by $\Cut_0$ the cut locus of the origin of the distance $d$, then it turns out that
\[
 \Cut_0=\{(x,t)\in\R^3\times\R^3: t\neq 0\quad\text{ and $x=\lambda t $ for some $\lambda\in\R$}\}.
\]
   Observe also the invariance property $d(Mx, My\times Mz)=d(x, y\times z)$ for all $x,y,z\in\R^3$ and $M\in O(3)$. 
  
 Denote by $q=(x,t)\in\R^6$ and $p=(\xi,\tau)\in \R^6  $ coordinates in $T^*\R^6$ and define the subRiemannian Hamiltonian 
 \begin{equation}\label{camillo} 
  H(q,p):=
%   \frac 12 \abs{p}_{T^*}^2:=
  \frac 12\sum_{j=1}^3 \scalar{p}{Y_j(q)}^2=
  \frac 12\sum_j\scalar{(\xi,\tau)}{Y_j(x,t)}^2=:\frac 12\sum_j u_j^2(x,t,\xi,\tau).
 \end{equation} 
 where  $\scalar{\cdot}{\cdot}$ denotes the Euclidean inner product in $\R^6$. 
 
Recall the following definition of subRiemannian exponential (see \cite{AgrachevBarilariBoscain}. Let $p\in T^*_{(0.0)}$. 
 Denote by $s\mapsto(q(s,p_0), p(s,p_0))
 $ the solution of the Hamiltonian system associated to 
 \eqref{camillo} with initial data $q(0)=(0,0)$ and $p(0)=p_0=(\xi_0,\t_0)$. Define
 \begin{equation}
  \cal{E}(p_0)= \E(\xi_0,\tau_0)=q(1,p_0).
 \end{equation} 
 Note that in our case solutions are defined globally in time for all initial datum $p_0$. Furthermore, by 
 the  property 
 $(q(s,\a p_0),p(s,\a p_0))=(q(\a s,p_0) ,\a p(\a s, p_0)$, we have $\E(sp)=q(s,p)$ for all $p,s$.
 The curve $s\mapsto q(s,p)=q(s,(\xi,\tau))$ is minimizing on some interval  $[0,  T_\cut(\xi,\tau)]$, where
 \begin{equation}
  T_\cut(\xi,\tau) =\text{the cut time of the path $s\mapsto q(s,(\xi,\tau))$.}
 \end{equation}

As usual in this setting,  instead of $(x_j ,t_{j },\xi_j,\t_{j})$ we may use coordinates  $(x_j,t_{j },u_j,\t_{j })$, where  \begin{equation*}
u_j(x,t,\xi,\tau)=\scalar{Y_j(x,t)}{(\xi,\tau)}
=\xi_j +\frac 12 \scalar{\t}{x\times e_j } 
=\xi_j+\sum_{\s\in S_3}\sigma(j,k,\ell) \t_k x_\ell,
\end{equation*}
where $\sigma(jk\ell)$ denotes the sign of the permutation. 
Hamilton's  equation, see \cite{BarilariBoscainGauthier12}, take the form
\[
\left\{\begin{aligned}
 &\dot u_j =\sum_k\scalar{(\xi,\tau)}{[Y_k,Y_j](x,t)}u_k
 =(\t\times u)_j                          
 \\&
 \dot\t_{j}=0
\\&
\dot x=u
% \sum_j u_jY_j(x,t)
\\&
\dot t=\frac 12x\times u
 \end{aligned}
\right.
\]
with initial condition $x(0)=0$, $t(0)=0$, $u(0)=\xi(0)=u$ and $ \t(0)=\t $.  
% \begin{equation*}
%  \begin{bmatrix}
% \dot u_1 \\ \dot u_2 \\ \dot u_3  
%  \end{bmatrix}
%  =
%  \begin{bmatrix}
% 0 &-\t_{12}&-\t_{13}
% \\
% \t_{12} & 0 & -\t_{23}
% \\
% \t_{13} & \t_{23} & 0
%  \end{bmatrix}
%  \begin{bmatrix}
%  u_1 \\  u_2 \\  u_3  
% \end{bmatrix}
% \end{equation*}

% \begin{proof}[Prima dimostrazione]Seguire Roberto articolo Lincei Calcolare Jacobiano di $G$. Attenzione alla iniettivit\`a??? Serve parametrizzare la famiglia
% $\a,\b,\z 
% $ ammissibili con esattamente cinque variabili (cio\`e il numero di gradi di libert\`a della famiglia delle terne ammissibili). Assieme alla ulteriore $\phi$ arriviamo a esattamente 6 gradi di libert\`a, cio]`e la dimensione topologica di $\F_3\sim\R^6$.
% 
% Per ora non so farlo.
%  Anna hai ifdea di come fare questa parametrizzazione???\color{black} 
% 
% 
% 
% \end{proof}
By the Rodrigues' rotation formula we get that if we consider the skew-symmetric map
\[
% T_\t x: = =%
 \R^3\ni x\mapsto \t\times x= 
  \begin{bmatrix}
    0& -\t_3 & \t_2
    \\
  \t_3 & 0 & -\t_1
  \\
-\t_2 & \t_1 & 0  
                    \end{bmatrix}
\begin{bmatrix}
 x_1 \\ x_2\\ x_3
\end{bmatrix}   =:A_\t x 
                    \]
then we have 
\begin{equation}\label{delafu} 
e^{  sA_\t }u= a\cos(\la s)+b\sin(\la s) + z,                                       \end{equation} 
where 
\begin{equation}\begin{aligned}
\lambda=  \abs{\t},\qquad a=-\frac{\t}{\abs{\t}}\times \Bigl(\frac{\t}{\abs{\t}}\times u\Bigr)
=u-\Scalar{u}{\frac{\t}{\abs{\t}}}\frac{\t}{\abs{\t}}
\\
b=\frac{\t}{\abs{\t}}\times u,\qquad z=\Scalar{u}{\frac{\t}{\abs{\t}}}
\frac{\t}{\abs{\t}}.
% \frac{m}{\abs{m}}\times u,\qquad b= 
 \end{aligned}
\end{equation}               
(These relations can be obtained easily   with a purely analitic argument   comparing derivatives of order $0,1,2$ at $s=0$ of the two sides of \eqref{delafu}).

 Thus we have proved the following facts. 
% Given $\t=(\t_{12}, \t_{13}, \t_{23})$, let $m(\tau)=(\t_{23}, -\t_{13},\t_{12})$.
\begin{proposition} 
 Let $s\mapsto \E (s(\xi,\tau)) =\E(s\xi,s\tau)$ be the   (projection of the)  solution of the Hamiltonian   associated with \eqref{camillo}  with 
 $(x(0),t(0))=(0,0)$ and $(\xi(0),\t(0))=(\xi,\tau)=(\xi_1,\xi_2,\xi_3,\t_{1}, \t_{2}, \t_{3})$.  
  Then the corresponding control $u(s)$ has the form
 \begin{equation}
 \begin{aligned}
 u(s)& =
 \left(\xi-\Scalar{\xi}{\frac{\t}{\abs{\t}}}\frac{\t}{\abs{\t}}\right)\cos(\abs{ \t }s)
 +\left(\frac{\t}{\abs{\t}}\times \xi\right)\sin(\abs{ \t    } s) 
%  \\& \qquad 
 +\Scalar{\xi}{\frac{\t}{\abs{\t}}}\frac{\t}{\abs{\t}}.
 \end{aligned}
 \end{equation}
 In particular
 \begin{equation}
  T_\cut(\xi,\tau) = \frac{2}{\abs{\t}}
   h_{\cut}\Big(\frac{\abs{\scalar{\xi}{\t}}}{\abs{\t\times\xi}}\Big),
 \end{equation} 
 and the function $T_\cut$ is $C^\infty$ smooth on  \begin{equation}
\Omega:=\{(\xi,\tau): \t\times \xi\neq 0\}.                                                                                          \end{equation}  
\end{proposition}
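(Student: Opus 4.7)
The plan is to combine the explicit integration of Hamilton's equations (already performed in the excerpt via the Rodrigues rotation formula) with the cut-time formula from Proposition~\ref{ticutto} and Theorem~\ref{trentadue}.

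First I would exploit the $(x,t,u,\tau)$ form of Hamilton's equations written above: since $\dot\tau=0$, the $u$-equation reduces to the linear ODE $\dot u=A_\tau u$ with $u(0)=\xi$, hence $u(s)=e^{sA_\tau}\xi$. Substituting $u_0=\xi$ into the Rodrigues-type identity \eqref{delafu} already derived in the excerpt yields the asserted explicit expression for $u(s)$ and identifies the admissible triple
\[
 a=\xi-\Scalar{\xi}{\tfrac{\tau}{\abs{\tau}}}\tfrac{\tau}{\abs{\tau}},\qquad b=\tfrac{\tau}{\abs{\tau}}\times\xi,\qquad z=\Scalar{\xi}{\tfrac{\tau}{\abs{\tau}}}\tfrac{\tau}{\abs{\tau}},\qquad \lambda=\abs{\tau}.
\]
Pairwise orthogonality is immediate, and $\abs{a}=\abs{b}$ follows from the Lagrange identity $\abs{\tau\times\xi}^{2}=\abs{\tau}^{2}\abs{\xi}^{2}-\scalar{\tau}{\xi}^{2}$; both are nonzero precisely when $\tau\times\xi\neq 0$, so admissibility holds throughout $\Omega$.

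Next I would apply Proposition~\ref{ticutto}, which reads $t_\cut(a,b,z,\phi)=h_\cut(\abs{z}/\abs{a})/\phi$, with $\phi=\lambda/2=\abs{\tau}/2$. Direct computation gives
\[
 \abs{z}=\frac{\abs{\scalar{\xi}{\tau}}}{\abs{\tau}},\qquad \abs{a}^{2}=\abs{\xi}^{2}-\frac{\scalar{\xi}{\tau}^{2}}{\abs{\tau}^{2}}=\frac{\abs{\tau\times\xi}^{2}}{\abs{\tau}^{2}},
\]
so $\abs{z}/\abs{a}=\abs{\scalar{\xi}{\tau}}/\abs{\tau\times\xi}$, which yields the displayed formula for $T_\cut$ after substitution.

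The only point beyond routine bookkeeping is the $C^\infty$ claim, and this is where I would focus. Using Theorem~\ref{trentadue} I would rewrite
\[
 T_\cut(\xi,\tau)=\frac{2}{\abs{\tau}}\,Q^{-1}\!\Bigl(\frac{\scalar{\xi}{\tau}^{2}}{\abs{\tau\times\xi}^{2}}\Bigr),
\]
observing that the inconvenient absolute value disappears upon squaring, so the argument of $Q^{-1}$ is a smooth function on $\Omega$ and $\abs{\tau}>0$ there. The delicate step is smoothness of $Q^{-1}$ at the endpoint $\mu^{2}=0$, corresponding to $\theta=\pi$. Using $S(\pi)=0$, $U(\pi)=1/(4\pi)$, $V(\pi)=1/(2\pi)$ together with $S'=-2V$ (from \eqref{derivouno}), one finds $Q'(\pi)=2U(\pi)=1/(2\pi)>0$, so $Q$ extends smoothly across $\theta=\pi$ with non-vanishing derivative and $Q^{-1}$ is $C^\infty$ on a neighborhood of~$0$. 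Composing smooth maps completes the proof. The main obstacle, as anticipated, is this endpoint regularity of $Q^{-1}$; once it is settled, everything else is a direct bookkeeping of the change of parameters $(\xi,\tau)\leftrightarrow(a,b,z,\phi)$.
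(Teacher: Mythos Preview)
Your argument is correct and follows the paper's approach: the formula for $u(s)$ is exactly the Rodrigues computation \eqref{delafu} preceding the proposition, and the cut-time formula then drops out of Proposition~\ref{ticutto} combined with Theorem~\ref{trentadue} after the bookkeeping $\abs{z}/\abs{a}=\abs{\scalar{\xi}{\tau}}/\abs{\tau\times\xi}$. Your verification that $Q'(\pi)=2U(\pi)=1/(2\pi)>0$, guaranteeing smoothness of $Q^{-1}$ at the endpoint $0$, makes explicit a point the paper leaves implicit.
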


\begin{remark}
  The function  $T_\cut$ is singular if $\xi $ is parallel to $\t$. Namely,  $T_\cut (\xi ,\t)\in \mathopen[\frac{2\pi}{\abs{\tau}},\frac{2\phi_1}{\abs{\tau}}\mathclose[$   for all $(\xi,\tau)\in\Omega $,   while  
  $T_\cut(\mu\t,\t)=+\infty$ for all $\t\neq 0$ and $\mu\in\R\setminus{\{0\}}$. The corresponding final  points  $\mathcal{E}(\mu\t,\t)=(\mu \t,0)$ belong to the abnormal set $   
 \Abn_0$.
\end{remark}
% A complete calculation with a more detailed study of the related exponential map will be performed in a subsequent paper.\footnote{perche' allora mettiamo questi conti qui? qui siamo interessati a far vedere cosa?}

%  \color{blue} %\color{ForestGreen}\color{Purple}

\section{Corner-like estimates at cut points}\label{sfogliatella} 
In this section we show that at any point of the cut locus we can construct a two dimensional $C^1$-smooth surface such that on such surface the distance has a corner-like singularity. Our estimates give  also an affirmative answer in our model to a question raised by Figalli and Rifford \cite{FigalliRifford10} on whether the distance fails to be semiconvex at the cut locus.  

To show our construction, we work in the model used in the previous section, with the group law \eqref{crossiamo} and we   consider a point $(\ol x,\ol t)\in\Cut_0$. Then we may write 
\begin{equation*}
 (\ol x, \ol t)=(S_\phi\b+\z,\a\times ( U_\phi \b+V_\phi \z)),
\end{equation*}
where the triple $\a,\b,\z$ is admissible and
belonging to the cut locus means that $\phi\in[\pi,\phi_1\mathclose[$ satisfies
 $-\frac{U_\phi S_\phi}{V_\phi}=\frac{\abs{\z}^2}{\abs{\b}^2}$. In other words,  $S_\phi\b+\z$ and $U_\phi \b+V_\phi \z$ are perpendicular, which is equivalent to the fact that $\ol x$ and $\ol t$ are parallel. We may also write $\ol t=\ol y\times \ol y^\perp$, where $\ol y,\ol y^\perp\in \Span\{\ol x\}^\perp$ are orthogonal and have the same length. Finally, recall  the formula $d(\ol x,\ol t )^2=\abs{\b}^2+\abs{\z}^2=\abs{\a}^2+\abs{\z}^2$.

Let $\s_0>0$ and let $x$ and $t:\mathopen]-\s_0,\s_0\mathclose[\to \R^3$ 
be smooth functions of the form
\begin{equation}\label{ristorante} 
 (x(\s),t(\s))= \Bigl(\ol x+\s(\mu_1y+\mu_2 y^\perp)+\s^2 r(\sigma), 
 \ol t+\s(\nu_1y+\nu_2 y^\perp)+\s^2 \rho(\sigma)\Bigr)
\end{equation}  
where $r$ and $\rho:\mathopen]-\s_0, \s_0\mathclose[\to \R^3$ are $C^\infty$ vector-valued functions. We always assume that   $(\mu_1,\mu_2)\neq(0,0)$. The vector $(\nu_1,\nu_2)$ can possibly vanish.
% Let $Heisenb\mu_0,\mu_1,\mu_2, \nu_0,\nu_1,\nu_2$ be real valued functions of class $C^1$ on some interval containing the origin and assume that 
% $
% (\mu_1(0),\mu_2(0))\neq 0. $ Define the curves
% \begin{equation*}\begin{aligned}
%  \sigma\mapsto (x(\sigma), t(\sigma))
%  := 
%  \\
%  \Bigl((1+\sigma^2 \mu_0(\s))&\ol x+  \sigma  \{\mu_1(\sigma)y+\mu_2(\sigma)y^\perp\},
% %  \\&\qquad\qquad \qquad 
% (1+\sigma^2 \nu_0(\s)) \ol t+ \sigma  \{\nu_1(\sigma)y+\nu_2(\sigma)y^\perp\}
%  \Bigr) 
%  .    \end{aligned}
% \end{equation*}
% SCRITTURA MENO MACCHINOSA 
% where $r$  and $ \rho :\mathopen]-\s_0,\s_0\mathclose[\to \R^3  $ are smooth functions in 
% Oppure
%  scrittura pi\'u snella
% \[
%  (x(\sigma), t(\sigma)= (\ol x + \sigma (\mu_1 y+\mu_2 y)+\sigma^2\Omega_1(\sigma), 
%  \ol t + \sigma^2\Omega_2(\sigma))
% \]
% 
Let $R(\theta)$ be the rotation of an angle $\theta$ which leaves fixed $\ol x$ 
(we can fix the sign by requiring  for example that for any  vector $\xi \perp\ol x$, the vector
$R(\theta) \xi\times (R'(\theta)\xi)$ points in the same direction of $\ol x$). Then define the parametrization
\begin{equation}\begin{aligned}
 H(\sigma,\theta)& :
  =(R(\theta) x(\sigma), R(\theta) t(\sigma))
\\
=\Bigl( \ol x + & \sigma R(\theta)\{\mu_1 y+\mu_2 y^\perp\}+ \s^2 R(\theta) r(\sigma)
%  \\&\qquad\qquad \qquad 
,  \ol t+ \sigma  R(\theta)\{\nu_1 y+\nu_2 y^\perp\}+\s^2 R(\theta) \r(\sigma)
 \Bigr).
\end{aligned}
\end{equation} 
Since $ (\mu_1,\mu_2)\neq (0,0)$, for small $\sigma_0>0$, the set 
% \begin{equation}
$ \Gamma =\{ H(\sigma,\theta):0\leq \sigma<\sigma_0,$ and $ 0\leq \theta <2\pi\}$
% \end{equation} 
is a $C^1 $-smooth two dimensional  surface containing $(\ol x,\ol t)$. The expansion~\eqref{ristorante} shows  that 
$T_{(\ol x,\ol t)}\Gamma=\Span\{(\mu_1y+\mu_2y^\perp, \nu_1 y+\nu_2 y^\perp),(-\mu_2 y+\mu_1y^\perp, -\nu_2 y+\nu_1 y^\perp) \}$. Furthermore the distance from the origin of points on $\Gamma$ enjoys the rotational invariance  property $d(H(\sigma,\theta_1))=d(H(\sigma,\theta_2))$ for
all $\theta_1, \theta_2$ and $\s$.

We will show that 
\begin{theorem}[Corner-like estimate for the distance at cut points]\label{corner}
 For any point $(\ol x,\ol t)=(S_\phi\beta+\zeta, \a\times(U_\phi\beta+ V_\phi \z))\in \Cut_0 $ there is 
 $\s_0>0$, there are  smooth functions $x  , t $ as in~\eqref{ristorante} with $(\mu_1 ,\mu_2 )\neq (0,0)$ and there is $C>0$ such that  
 \begin{equation}  
  d(H(\sigma,\theta))\leq d(\ol x,\ol t)- C\sigma,\quad\text{ for all $\sigma\in[0,\sigma_0\mathclose[$
  and $\theta\in[0,2\pi\mathclose[$.}
 \end{equation} 
\end{theorem}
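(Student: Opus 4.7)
The plan is to exploit the non-uniqueness of minimizers from the origin to $(\ol x,\ol t)$. By Theorem~\ref{unmaintheorem}, a whole one-parameter family $\{\gamma_\sigma\}_{\sigma\in[0,2\pi[}$ of normal minimizers reaches $(\ol x,\ol t)$ at time $s=1$, all sharing the same $\phi\in[\pi,\phi_1[$ with $P(\phi)=|\ol x|^2/|\ol y|^2$. In the Hamiltonian picture of Section~\ref{sec5} each $\gamma_\sigma$ carries a final covector $p_\sigma=(\xi_\sigma(1),\tau_\sigma)\in T^*_{(\ol x,\ol t)}\F_3$, with $\tau_\sigma$ constant along the geodesic and parallel to the drift $\z_\sigma$ of \eqref{corresponding}, and $\xi_\sigma(1)=u_\sigma(1)-\tfrac12\tau_\sigma\times\ol x$.

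The first step produces, for a suitable $\sigma_0$, a $C^\infty$ upper bound $d_{\sigma_0}$ for $d(0,\cdot)$ defined near $(\ol x,\ol t)$, satisfying $d_{\sigma_0}(\ol x,\ol t)=L:=d(0,(\ol x,\ol t))$ and $\nabla d_{\sigma_0}(\ol x,\ol t)=p_{\sigma_0}/L$. If the exponential map $\mathcal E$ of Section~\ref{sec5} is a local diffeomorphism at the initial covector of $\gamma_{\sigma_0}$, one may simply set $d_{\sigma_0}(q)=\sqrt{2H(\mathcal E^{-1}(q))}$. Otherwise one falls back on the concatenation bound $d(0,q)\leq d(0,\gamma_{\sigma_0}(1-\varepsilon))+d(\gamma_{\sigma_0}(1-\varepsilon),q)$, in which the second term is smooth in $q$ because $\gamma_{\sigma_0}(1-\varepsilon)\notin \Cut_0\cup \Abn_0$ for small $\varepsilon>0$. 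In either case a first-order Taylor expansion yields
\begin{equation*}
d(0,q)\leq L+L^{-1}\langle p_{\sigma_0},\,q-(\ol x,\ol t)\rangle+O(|q-(\ol x,\ol t)|^2).
\end{equation*}

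The second step is to find a descent direction $(h,k)$ inside the $4$-dimensional subspace $V:=\Span\{\ol y,\ol y^\perp\}\times\Span\{\ol y,\ol y^\perp\}$ imposed by \eqref{ristorante}, with $h\neq 0$. I would verify that, as $\sigma$ varies, the orthogonal projection of $\xi_\sigma(1)$ onto $\Span\{\ol y,\ol y^\perp\}=\ol x^\perp$ is not identically zero: the $\ol x^\perp$-component of $\z_\sigma$ (and therefore of $\tau_\sigma$) equals $-\tfrac{S_\phi}{W_\phi}(U_\phi W_\phi)^{1/4}(\ol y\cos\sigma+\ol y^\perp\sin\sigma)$, which is non-trivial whenever $\phi\in(\pi,\phi_1)$, and the non-constancy of $\sigma\mapsto u_\sigma(1)$ has already been proved at the end of Proposition~\ref{itemcinque}. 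Pick $\sigma_0$ where this projection is nonzero, let $h\in\Span\{\ol y,\ol y^\perp\}$ be its opposite unit vector, and set $(h,k):=(h,0)\in V$; then $\langle p_{\sigma_0},(h,k)\rangle<0$ and the required $(\mu_1,\mu_2)\neq(0,0)$ holds by construction. The degenerate case $\phi=\pi$ forces $\ol x=0$ (Heisenberg subgroup) and can be treated directly, since the orthogonality constraint becomes vacuous there.

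It remains to assemble the surface. Taking $x(\sigma')=\ol x+\sigma' h$ and $t(\sigma')=\ol t+\sigma' k$ (i.e.\ \eqref{ristorante} with $r\equiv\rho\equiv 0$), the bound from the first step gives
\begin{equation*}
d(0,(x(\sigma'),t(\sigma')))\leq L+\sigma'L^{-1}\langle p_{\sigma_0},(h,k)\rangle+O(\sigma'^2)\leq L-C\sigma'
\end{equation*}
for some $C>0$ and all sufficiently small $\sigma'\geq 0$. The required $2$-dimensional $C^1$ surface $\Gamma=\{H(\sigma',\theta)\}$ is obtained by rotating this curve about the $\ol x$-axis, and the estimate extends uniformly in $\theta$ by the $O(3)$-invariance $d(0,Mq)=d(0,q)$ recorded at the end of Section~\ref{preliminare}. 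The main obstacle is the first step: producing a smooth stratum of the distance through the cut point on which to linearise. The concatenation alternative is robust and sidesteps any conjugate-point difficulty at $s=1$, at the price of a slightly more careful expansion.
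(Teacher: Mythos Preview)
Your strategy is genuinely different from the paper's and is in principle sound, but it contains two concrete gaps that need to be filled.

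\medskip\noindent\textbf{First gap: the concatenation justification.}
You write that $d(\gamma_{\sigma_0}(1-\varepsilon),\cdot)$ is smooth at $(\ol x,\ol t)$ ``because $\gamma_{\sigma_0}(1-\varepsilon)\notin \Cut_0\cup\Abn_0$''. This is the wrong condition: by left-invariance, smoothness of $d(p,\cdot)$ at $q$ requires $p^{-1}\cdot q\notin \Cut_0\cup\Abn_0$, not $p\notin\Cut_0\cup\Abn_0$. The correct argument is that the left-translated geodesic reaching $\gamma_{\sigma_0}(1-\varepsilon)^{-1}\cdot(\ol x,\ol t)$ from the origin is a phase-shifted copy of $\gamma_{\sigma_0}$, hence has the same cut time~$1$; since $\varepsilon<1$, that translated point is strictly before cut time (so not in $\Cut_0$, by Theorems~\ref{unmaintheorem} and~\ref{trentadue}) and has nonzero $t$-component (so not in $\Abn_0$). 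This is easy, but it is not what you wrote.

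\medskip\noindent\textbf{Second gap: non-vanishing of the projection of $\xi_\sigma(1)$.}
Since you set $(h,k)=(h,0)$, you need the $\Span\{\ol y,\ol y^\perp\}$-component of $\xi_\sigma(1)=u_\sigma(1)-\tfrac12\tau_\sigma\times\ol x$ to be nonzero for some $\sigma$. The two facts you invoke (non-triviality of $\z_\sigma^\perp$ and non-constancy of $u_\sigma(1)$) do not by themselves rule out an exact cancellation. A short direct computation does: writing the projection as $A'\cos\sigma+B'\sin\sigma+C'$, one checks $C'=0$, while the $\ol y$-component of $A'$ equals $\tfrac{\cos\theta-S_\theta}{W_\theta}(U_\theta W_\theta)^{1/4}=-\tfrac{2\theta V_\theta}{W_\theta}(U_\theta W_\theta)^{1/4}\neq 0$ on $[\pi,\phi_1)$, and the corresponding component of $\tfrac12\tau_\sigma\times\ol x$ lies entirely along $\ol y^\perp$. (Alternatively, you could allow $k\neq 0$ and use only $\tau_\sigma^\perp\neq 0$, which you \emph{did} establish; the constraint $(\mu_1,\mu_2)\neq(0,0)$ then still holds by choosing $h$ small and nonzero.)

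\medskip\noindent\textbf{Comparison with the paper.}
The paper does not use the superdifferential at all. It constructs the curve $(x(\sigma),t(\sigma))$ explicitly by perturbing the extremal parameters: it replaces $(\phi,\a,\b,\z)$ by $(\phi-\sigma,(1-c_1\sigma)\a,(1-c_1\sigma)\b,(1-c_2\sigma)\z)$, uses the trivial bound $d^2\le(1-c_1\sigma)^2|\a|^2+(1-c_2\sigma)^2|\z|^2$, and then solves a $2\times 2$ linear system for $(c_1,c_2)$ to force $x'(0),t'(0)\perp\ol x$. The linear decrease of the bound and the condition $x'(0)\neq 0$ are then verified by explicit (but somewhat lengthy) inequalities in $S_\phi,U_\phi,V_\phi,W_\phi$. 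The paper's route is fully self-contained and produces the tangent direction in closed form, which it later reuses in Corollary~\ref{cuttolo}. Your route is more conceptual and transparent about \emph{why} a corner must appear (multiple super-gradients), at the cost of importing the smooth-upper-bound machinery and still needing a nontrivial calculation to place the descent direction inside the required subspace.
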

This gives a   generalization of the well known estimates at cut points $(0,t)\neq(0 ,0)\in\C\times\R$
of the Heisenberg group $\H^1,$   namely
$
 d(z,t)\leq d(0 , t)-C\abs{z},$  for   $\abs{z}$ small.
One can  recover a similar estimate as a limiting case of Theorem~\ref{corner}, as $\phi=\pi$. 
See Remark~\ref{Heise}.

As a corollary we get an answer in this setting to the question raised by Figalli and Rifford (see the \emph{Open problem} at p~145-146 in  \cite{FigalliRifford10}).
\begin{corollary}[Failure of semiconvexity at cut points]\label{cuttolo} 
 At any cut point $(\ol x, \ol t)\in \Cut_0$, for any neighborhood $\Omega$ of $(\ol x,\ol t)$, we have 
 \begin{equation}
  \inf_{p,q,(p+q)/2\in\Omega}\frac{d(p)+d(q)-2d((p+q)/2)}{\abs{p-q}^2}=-\infty .
 \end{equation} 
 \end{corollary}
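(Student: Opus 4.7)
The plan is to apply Theorem~\ref{corner} and exploit the fact that, in the present model, a suitable Euclidean midpoint of antipodal points on the surface~$\Gamma$ happens to lie on the cut locus itself; this lets us bypass the subRiemannian H\"older-only control in the vertical direction.

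Given $(\ol x,\ol t)\in\Cut_0$ and a neighborhood $\Omega$ of it, Theorem~\ref{corner} produces $\sigma_0>0$, a constant $C>0$, and smooth functions $x(\sigma),t(\sigma)$ as in \eqref{ristorante} so that $H(\sigma,\theta)=(R(\theta)x(\sigma),R(\theta)t(\sigma))$ satisfies $d(H(\sigma,\theta))\le d(\ol x,\ol t)-C\sigma$. For each small $\sigma>0$ I would set $p_\sigma:=H(\sigma,0)$ and $q_\sigma:=H(\sigma,\pi)$. Since the horizontal norm in the model of Section~\ref{sec5} is Euclidean and the group law is $SO(3)$-covariant, $d$ is invariant under joint rotations of $x$ and $t$; hence $d(p_\sigma)=d(q_\sigma)\le d(\ol x,\ol t)-C\sigma$. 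Moreover $R(0)-R(\pi)$ acts as $2I$ on $\ol x^\perp$, and the linear parts of $x(\sigma),t(\sigma)$ lie in $\ol x^\perp$, so together with $(\mu_1,\mu_2)\neq(0,0)$ a direct expansion gives $|p_\sigma-q_\sigma|^2\ge c\sigma^2$ for some $c>0$.

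The crux is to control $d$ at the Euclidean midpoint $m_\sigma:=(p_\sigma+q_\sigma)/2$. Writing $v=v^{\|}+v^\perp$ relative to the axis $\R\ol x$, one has $R(\pi)v=v^{\|}-v^\perp$, so $I+R(\pi)$ acts as $2\cdot\mathrm{(projection\ onto\ }\R\ol x)$. The linear-in-$\sigma$ terms in $x(\sigma),t(\sigma)$ lie in $\ol x^\perp$ and therefore cancel, yielding
\[
m_\sigma=\bigl(\ol x+\sigma^2 r^{\|}(\sigma),\; \ol t+\sigma^2\rho^{\|}(\sigma)\bigr),
\]
where $r^{\|}(\sigma),\rho^{\|}(\sigma)\in\R\ol x$. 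The observation to exploit is that the cut-locus condition in this model reads $\ol t\parallel\ol x$, so both components of $m_\sigma$ remain on the single line $\R\ol x=\R\ol t$, and in particular $m_\sigma\in\Cut_0$ for all small $\sigma>0$. Combining this with Remark~\ref{quattrocchi} and the explicit formula of Theorem~\ref{secondoteorema}, the restriction of $d$ to the smooth $4$-dimensional submanifold $\Cut_0$ is smooth, hence locally Lipschitz; since $|m_\sigma-(\ol x,\ol t)|=O(\sigma^2)$, this gives the upgraded estimate $|d(m_\sigma)-d(\ol x,\ol t)|=O(\sigma^2)$. Putting everything together,
\[
\frac{d(p_\sigma)+d(q_\sigma)-2d(m_\sigma)}{|p_\sigma-q_\sigma|^2}\le\frac{-2C\sigma+O(\sigma^2)}{c\sigma^2}\longrightarrow-\infty\quad\text{as }\sigma\to 0^+,
\]
and since $p_\sigma,q_\sigma,m_\sigma\in\Omega$ for small~$\sigma$, this yields the infimum stated in the corollary.

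The step I expect to be the main obstacle is precisely the bound on $|d(m_\sigma)-d(\ol x,\ol t)|$. A naive application of the ball-box principle only yields $O(\sigma)$ (the $t$-component of $m_\sigma-(\ol x,\ol t)$ is of order $\sigma^2$, whose square root is $\sigma$), and such a bound does not beat the numerator $-2C\sigma$. The saving is the algebraic coincidence that the midpoint correction is forced into the direction $\R\ol x$ by the cancellation $(I+R(\pi))|_{\ol x^\perp}=0$, combined with the cut-locus relation $\ol t\parallel\ol x$; this places $m_\sigma$ inside $\Cut_0$, where the smoothness of $d|_{\Cut_0}$ furnishes the required $O(\sigma^2)$ estimate.
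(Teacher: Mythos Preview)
Your proof is correct and follows essentially the same route as the paper: take $p_\sigma=(x(\sigma),t(\sigma))$ and its $180^\circ$-rotated image $q_\sigma=(R(\pi)x(\sigma),R(\pi)t(\sigma))$, use rotational invariance together with Theorem~\ref{corner} to bound $d(p_\sigma)+d(q_\sigma)$, and observe that the Euclidean midpoint falls on $\Cut_0$ at distance $O(\sigma^2)$ from $(\ol x,\ol t)$, whence the smoothness of $d|_{\Cut_0}$ (Theorem~\ref{secondoteorema}/Remark~\ref{quattrocchi}) gives the $O(\sigma^2)$ control. The only cosmetic difference is that the paper writes out the explicit formula $d(\xi_\sigma,\tau_\sigma)^2=|\xi_\sigma|^2+R(\phi_\sigma)|\tau_\sigma|$ to extract that estimate, whereas you invoke the smoothness statement directly.
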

 Strictly speaking,  
 to get a complete analogous of the Riemannian estimate (2.6) in Proposition~2.5 
   of \cite{CorderoMcCannSchmuckenschläger01}, our infimum should be calculated with $\frac{p+q}{2}=(\ol x,\ol t)$. Corollary~\ref{cuttolo} will be proved at the end of the section.

 We also remark that in \cite{FigalliRifford10} the authors define the cut locus $\Cut^{\textup{FR}}_0$ as the closure of the 
 set of points $(x,t)\neq (0,0)$ where the  distance is not continuously differentiable. This set is strictly larger than our set $\Cut_0$. Namely, we have  
$\Cut^{\textup{FR}}_0=  \Cut_0 \cup \Abn_0 $, where the union is disjoint and $\Abn_0$ is the set of final points of length minimizing abnormal curves and it is known that 
  $\Abn_0=\{(x,0)\in\R^3\times \R^3\}$ (see \cite{LeDonneMontgomeryOttazziPansuVittone,MontanariMorbidelli15}). At points $(x,0)$, in \cite{MontanariMorbidelli15} we proved that
  \[
   d(x,\s x)\geq d(x,0)+C\abs{\s}, \quad\text{for $\sigma$ close to $0$.}
  \]
Roughly speaking, the presence of abnormals gives rise to the existence of directions where the Hessian of the distance is $+\infty$, i.e.~semiconcavity fails. 
On the contrary, we  are not aware of
the existence of directions where the Hessian is $-\infty$ at abnormal points $(x,0)$.

\begin{proof}[Proof of Theorem \ref{corner}] Let $(\ol x, \ol t)=(S_\phi\b+\z,\a\times (U_\phi \b+V_\phi \z ))\in \Cut_0$.
  We first construct  a smooth  curve $\sigma \mapsto (x(\sigma), t(\sigma))$ for $\sigma   $ in a neighborhood of the origin defined as
\[ (x(\sigma), t(\sigma))=\Bigl( S_{  \phi -\sigma}(1- c_1\sigma)\b + (1-c_2\s)\z,
(1-c_1\s)\a\times \{ U_{\phi-\s}(1-c_1\s)\b+V_{ \phi -\s}(1-c_2\s)\z \} \Bigr).
\]
Observe that, given $\phi\in[\pi,\phi_1\mathclose[ $, the triple $\a(\s)=(1-c_1\s)\a$, $\b(\s)=(1-c_1\s)\b$ and $\z(\s)=(1-c_2\s)\z$ is admissible for all $\sigma$ close to $0$.
Therefore we have the upper estimate
$d(x(\s),t(\s))^2\leq (1-c_1\s)^2\abs{\a}^2+(1-c_2\s)^2\abs{\z}^2$ (which,  by our previous results,  becomes an equality if and only if $-\frac{U_{\phi-\s} S_{\phi-\s}}{V_{\phi-\s}}\leq  \frac{(1-c_2\s)^2\abs{\z}^2}{(1-c_1\s)^2\abs{\a}^2}$).

\medskip \noindent \textit{Step 1.} 
We show that there is a (actually unique) choice of $c_1,c_2$ such that
$x'(0)$ and $t'(0)\in\Span\{y,y^\perp\}$. We start with $x'(0)$. A calculation shows that
\begin{equation*}
 \begin{aligned}
x'(0)
&=\frac{d}{d\s}\Big|_{\sigma =0} S_{ \ol \phi -\s} (1-c_1\s)\b+(1-c_2\s)\z 
% \\&
   = - S'_\phi\b - c_1 S_\phi\b-c_2\z
   \\&
   =(2V_\phi -c_1 S_\phi)\b - c_2\z,
   \end{aligned}
\end{equation*}
by formula $S'_\phi=-2V_\phi$, where we write $S'_\phi=\frac{dS}{d\phi}$. 
We must require that $\scalar{x'(0)}{\ol x}=0$, where $\ol x=S_\phi\b+ \z$. 
Thus
\begin{equation*}
\begin{aligned}
 0=\scalar{x'(0)}{\ol x} &=
%  \Big\langle
 S_\phi(2V_\phi-c_1S_\phi)\abs{\b}^2-c_2\abs{\z}^2 
%  \Big\rangle
 =\frac{S_\phi}{V_\phi}\abs{\b}^2(2V_\phi^2-c_1 V_\phi S_\phi+c_2 U_\phi),
\end{aligned}
\end{equation*}
where  we used the identity $\frac{\abs{\z}^2}{\abs{\b}^2}= -\frac{U_\phi S_\phi}{V_\phi}$ which holds on the cut locus. 
% Then we get 
Thus we have found a first linear equation in $c_1, c_2$.

Next we calculate $t'$.
\begin{equation*}
\begin{aligned}
 t'(0)&= -c_1\a\times (U_\phi \b+ V_\phi \z)+\a\times[-(U'_\phi \b+c_1U_\phi \b
 +V'_\phi \z+V_\phi c_2\z)]
 \\&
   = -\a\times [(2c_1 U_\phi + U'_\phi )\b +((c_1+c_2)V_\phi +V'_\phi )\z].
\end{aligned}
\end{equation*}
Since $x=S_\phi \b +\z$ is orthogonal to $U_\phi \b + V_\phi \z$ on the cut locus, condition $\scalar{t'(0)}{\ol x}=0$ is equivalent to $ \Scalar{
 (2c_1 U_\phi + U'_\phi )\b +((c_1+c_2)V_\phi +V'_\phi )\z}{ U_\phi \b + V_\phi \z}=0.$ 
 A calculation of the inner product gives
\begin{equation*}
\begin{aligned}
&(2c_1 U_\phi + U'_\phi )U_\phi \abs{\b}^2 +((c_1+c_2)V_\phi +V'_\phi )V_\phi \abs{\z}^2=0\quad\text{ i.e.}
\\&
% \Leftrightarrow\quad 
(2U_\phi -S_\phi V_\phi )c_1 - S_\phi V_\phi c_2= S_\phi V'_\phi -U'_\phi ,
\end{aligned}
\end{equation*}
which is again a linear equation in $c_1, c_2$.
Ultimately we have the linear system
\begin{equation*}
\left\{\begin{aligned}
 &  V_\phi S_\phi c_1- U_\phi c_2 =2V_\phi^2 
 \\&
 (2U_\phi -S_\phi V_\phi )c_1 - S_\phi V_\phi c_2= S_\phi V'_\phi -U'_\phi.
\end{aligned}\right.
\end{equation*} 
The solution has the form
\begin{equation}\label{ciunodue} 
 c_1=c_1(\phi) =\frac{-2 S_\phi V^3_\phi - U_\phi U'_\phi +U_\phi S_\phi V'_\phi }
 {2U_\phi ^2 - U_\phi  S_\phi V_\phi -V^2_\phi S^2_\phi }\quad\text{and}
 \quad c_2=c_2(\phi)=\frac{V_\phi }{U_\phi }(S_\phi c_1   -2V_\phi ).
\end{equation} 
 Note that $2U_\phi ^2 - U_\phi  S_\phi V_\phi -V^2_\phi S^2_\phi>0$ for all   $\phi\in [\pi, \phi_1[$,   because $2U_\phi^2>0$, $S_\phi V_\phi\leq 0$ and $U_\phi+S_\phi V_\phi>0$, as it can be easily seen using the  very definition of $U,V,S$.

 Observe  that, if we write $x'(0)$ and $t'(0)$ in terms of $c_1$ only,  it turns out that 
 \[
  x'(0)=\frac{2V_\phi - c_1 S_\phi}{U_\phi}(U_\phi \b + V_\phi \z)\quad\text{ and } 
  \quad 
  t'(0)=-\frac{2c_1 U_\phi + U_\phi'}{S_\phi}\,\a\times (S_\phi \b+\z),
 \]
and ultimately, since $U_\phi \b+ V_\phi \z$ and   $S_\phi \b+\z$   are orthogonal, 
$x'(0)$  and $t'(0)$ are parallel.

%  \medskip\noindent\textit{Step 2.} We have $x'(0)\neq 0$. Since $x'(0$
%  Observe that $c_1>0$ (indeed, by formulas for the derivatives 
%  This concludes the proof of Step 1. 
% 

\medskip\noindent\textit{Step 2.} Concerning the curve constructed above, we show that there is $C>0$ and $\s_0>0$ such that 
\begin{equation*}
 d(x(\s), t(\s))\leq d(\ol x,\ol t)- C\s \quad\text{for all $\sigma\in[0,\s_0\mathclose[$. }
\end{equation*}
To prove such estimate, start from the upper estimate
\begin{equation*}
\begin{aligned}
d(x(\sigma), t(\sigma))^2 & 
% \leq \length(\sigma)^2 
\leq (1-c_1\s)^2 \abs{\a}^2+(1-c_2\s)^2\abs{\z}^2
% \\&
=
\abs{\a}^2+\abs{\z}^2-2\s(c_1\abs{\a}^2+c_2\abs{\z}^2)+O(\s^2).
\end{aligned}
\end{equation*}
Thus, we must require that
\begin{equation}
 c_1\abs{\a}^2+c_2\abs{\z}^2>0,
\end{equation} 
where $c_1, c_2$ have been found in the previous step. This inequality is non trivial, because we do not know the sign of $c_1$ and $c_2$ (until Step 3 below).
Inserting 
again the cut relation $\frac{\abs{\z}^2}{\abs{\a}^2}=-\frac{U_\phi S_\phi }{V_\phi }$, we obtain $V_\phi c_1 -  U_\phi S_\phi  c_2>0$. Inserting the expression of $c_2(\phi)$ obtained in \eqref{ciunodue}, we find 
\begin{equation}
\label{ciuno} 
\begin{aligned}
 &(1-S^2_\phi )c_1+2 S_\phi  V_\phi >0 \qquad\text{which gives}
 \\ 
 &(1-S^2_\phi )
 \frac{-2 S_\phi V^3_\phi - U_\phi U'_\phi +U_\phi S_\phi V'_\phi }
 {2U_\phi ^2 - U_\phi  S_\phi V_\phi -V^2_\phi S^2_\phi }+ 2S_\phi V_\phi >0  .
\end{aligned}
  \end{equation}  
%   Inserting the value of $c_1(\phi)$, we obtain the inequality
% \begin{equation}
% % \label{prozio} 
% \begin{aligned}
% \end{aligned}
% \end{equation}
 We check that the latter (strict) inequality holds on the whole interval 
 $[\pi,\phi_1\mathclose[$. By formulas~\eqref{derivouno} for the derivatives   $U', V'$, 
we get  
\begin{equation*}\begin{aligned}
 (1-S_\phi^2) &\Bigl(-2S_\phi V_\phi^3-\frac{\cos\phi}{\phi} U_\phi V_\phi +
 \frac{U_\phi S_\phi^2}{2}-\frac{2U_\phi S_\phi V_\phi}{\phi}\Bigr)                                                                  
 \\&\qquad 
 +2 V_\phi S_\phi\Bigl( 2U_\phi^2- U_\phi V_\phi S_\phi -V_\phi^2S_\phi^2\Bigr)>0.
 \end{aligned}
\end{equation*}
(Recall again that  $-1<S\leq 0$, $\cos \phi <0,$ 
  while $U$ and $ V$ are positive on $[\pi, \phi_1\mathclose[$).
We simplify the term with $V_\phi^3 S_\phi^3$ and we observe that  
\[
\begin{aligned}
-2S_\phi V_\phi^3+(1-S_\phi^2) &\Bigl(-\frac{\cos\phi}{\phi} U_\phi V_\phi +
 \frac{U_\phi S_\phi^2}{2}-\frac{2U_\phi S_\phi V_\phi}{\phi}\Bigr)                                                                  
 +2 V_\phi S_\phi\Bigl( 2U_\phi^2- U_\phi V_\phi S_\phi \Bigr)
  \\&\qquad 
  \gneqq (1-S_\phi^2) \Bigl(
-\frac{2U_\phi S_\phi V_\phi}{\phi}\Bigr)                                                                  
 +2 V_\phi S_\phi\Bigl( 2U_\phi^2- U_\phi V_\phi S_\phi \Bigr)
  \\&\qquad 
  =-2S_\phi U_\phi V_\phi  \Bigl( \frac{(1-S_\phi^2)}{\phi}+V_\phi S_\phi-2U_\phi \Bigr),
  \end{aligned}
\]
 where in the first inequality we  deleted some positive terms  
 (note that $-\frac{\cos\phi}{\phi} U_\phi V_\phi +
 \frac{U_\phi S_\phi^2}{2}>0$ strictly on the interval $[\pi,\phi_1\mathclose[$).  
Next, using the definition of $U, V, S$,  we show that the last parenthesis multiplied by $\phi^3$ is positive.   
\[
\begin{aligned}
\phi^2 \Bigl( {(1-S_\phi^2)}+{\phi}V_\phi S_\phi-2{\phi}U_\phi \Bigr)& =
\phi^2-\sin^2\phi+\frac12 \Bigl(
\sin \phi (\sin \phi- \phi \cos \phi )-\phi (\phi- \sin \phi \cos \phi)
\Bigr)\\
&=
\phi^2-\sin^2\phi+\frac12 \Bigl(
\sin^2 \phi -\phi^2 
\Bigr)  = \frac12 \Bigl(
\phi^2 -\sin^2 \phi 
\Bigr)>0,
 \end{aligned}
\]
as required.

\medskip\noindent\textit{Step 3.} We show that $x'(0)\neq 0$. From  the first line of~\eqref{ciuno},
which has been already proved in Step 2 and from the fact that $S_\phi V_\phi\leq 0$, we conclude  that $c_1(\phi)> 0$    (this and~\eqref{ciunodue} tell us that $c_2<0$). Then, since $x'(0)=(2V_\phi -c_1 S_\phi)\b - c_2\z$, this vector can never 
vanish, because the coefficient $(2V_\phi -c_1(\phi) S_\phi)$ is strictly positive   for all 
$\phi\in[\pi, \phi_1\mathclose[$.  
\end{proof}
%  
% \medskip\noindent\textit{Step 3.}  We show that we may assume that $x'(0)\neq 0$ and  $t'(0)=0$. 
% We use again the invariance of the distance  $d(R\xi,R\tau)= d(\xi,\tau)$, for all $(\xi,\tau)\in \R^3\times\R^3$ and $R\in O(3)$. 
% It suffices to  consider for any $\sigma\geq 0$ a rotation which brings $t(\sigma)$ to the vector $\abs{t(\sigma)}\ol t$ leaving fixed the vector $t(\sigma)\times \ol t$. This rotation can be chosen to depend smoothly from $\sigma$. 
% 
% 
% Therefore we have found a smooth curve $\sigma \mapsto x(\sigma, t(\sigma))$ such that $x'(0)\neq 0$, $t'(0)=0$ and the estimate
% \begin{equation*}
%  d(x(\sigma), t(\sigma))\leq C d(\ol x, \ol t)-C\sigma \quad\text{for all $\sigma \in[0, \sigma_0\mathclose[$.}
% \end{equation*} 

% \medskip\noindent\textit{Step 4.} By previous steps and by invariance of the distance with respect to the rotations  $R(\theta)=\exp(\theta T_{x/\abs{x}})$ living fixed $x$, we improve estimate 

\begin{remark}\label{Heise}
 If we choose $\phi=\pi$ in the proof above, the condition $-\frac{u_\phi s_\phi}{v_\phi}=\frac{\abs{\z}^2}{\abs{\b}^2}$ shows that $\z=0$ and therefore 
$
 (\ol x, \ol t)=(0, U_\pi \a\times\b), 
$ a point on the ``$t$-axis'' of the (Heisenberg) subgroup $\Span\{(\a,0),(\b,0),(0,\a\times\b)\}$. Since $\z=0$, the constant $c_2$ plays no role, the condition on $ x'(0)=2V_\pi\b=\frac{1}{\pi}\b$ becomes empty and the condition on
   $t'(0)=-(2c_1U_\pi + U'_\pi)\a\times\b$   becomes   $ 2c_1 U_\pi + U_\pi' =0$, i.e.~$c_1=\frac{1}{\pi}$. Ultimately,  the curve has the form
\[
\begin{aligned}
 (x(\sigma), & t(\sigma))
 =\Big(S_{\pi-\s}(1-c_1\s) \b,(1-c_1\s)^2U_{\pi-\s}\a\times\b\Big)
 \\=&
 \Bigl(\frac{\sin\s}{\pi}\beta, \frac{1}{4\pi^2}(\pi-\s+\sin\s\cos\s)
 \alpha\times\b\Big)
%  \\&
 =\Big(\frac{\b }{\pi}\s+O(\s^2),\frac{1}{4\pi}(1+O(\s^2))\a\times\b\Big)
\end{aligned}
\]
which gives the corner estimate on the two dimensional surface obtained by rotating the curve $\sigma\mapsto(x(\s), t(\s))$ around the vertical axis $\Span\{\a\times\b\}$ of the Heisenberg subgroup $\Span\{(\a,0),(\b,0),(0,\a\times\b)\}$.
\end{remark}

\begin{proof}[Proof of Corollary \ref{cuttolo}]
 Let $(\ol x, \ol t)=(S_\phi \b+\z,\a\times(U_\phi\b+V_\phi\z))\in \Cut_0$ and let  $\sigma\mapsto(x(\sigma), t(\sigma))=:(x_\s, t_\s)$ be the curve constructed in the proof of the theorem above
 for $\s\geq 0$. Let  $M\in O(3)$ be the rotation of 180 degrees leaving fixed $\ol x$. By invariance of the distance, we have $d(M x_\s ,M t_\s ))=d(x_\s , t_\s )$. Denote by $(\xi_\s, \t_\s)= \frac 12((M x_\s,M t_\s )+   (x_\s , t_\s ))$. Such point belongs to $\Span\{(\ol x,0), (0,\ol x)\}$ (i.e.~to $\Cut_0$) and has coordinates
 \[
  (\xi_\s, \t_\s)=\big(\ol x(1+O(\s^2)), \ol t(1+O(\sigma^2))\big),
 \]
as $\sigma\to 0+$. This follows from the property $\scalar{x'(0)}{\ol x}=\scalar{t'(0)}{\ol t}=0$. By formula \eqref{distanziatore} for the distance on $\Cut_0$,  we have
 \[
  d(\xi_\s,\t_\s)^2= \abs{\xi_\s}^2+R(\phi_\s)\abs{\t_\s}, \quad \text{where }
  \phi_\s=P^{-1}\Bigl(\frac{\abs{\xi_\s}^2}{\abs{\t_\s}}\Bigr).
 \]
Since the function $P^{-1}$ and $R$ are smooth,   we have $\phi_\s=\phi +O(\sigma^2)$ and   $R(\phi_\s)=R(\phi)+O(\s^2)$, which implies  
$  d(\xi_\s,\t_\s)=d(\ol x, \ol t)+O(\sigma^2)$.
Ultimately, we have 
\[
 d(x_\s, t_\s)+ d(M x_\s, M t_\s)-2 d\Big(\frac{ (x_\s, t_\s)+  (M x_\s, M t_\s)}{2}\Big)
 \leq -C \sigma + O(\sigma^2)
\]
and the theorem is proved by letting $\s\to 0+$.
\end{proof}

% 
% 
% Assume that for a given $\phi\in[\pi,\phi_1\mathclose[$ the following three condition are assumed
% \begin{equation}\begin{aligned}
%  (1-S_\phi^2) &\Bigl(-2S_\phi V_\phi^3-\frac{\cos\phi}{\phi} U_\phi V_\phi +
%  \frac{U_\phi S_\phi^2}{2}-\frac{2U_\phi S_\phi V_\phi}{\phi}\Bigr)                                                                  
%  \\&\qquad 
%  +2 V_\phi S_\phi\Bigl( 2U_\phi^2- U_\phi V_\phi S_\phi -V_\phi^2S_\phi^2\Bigr)>0
%  \end{aligned}
% \end{equation} 

\section*{Acknowledgements}  
 We thank Luca Rizzi and Yuri Sachkov who kindly gave us several useful bibliographic information on the state of the art of the subject. We thank the anonymous referee   for raising 
 some useful questions,
which led us to an improvement of the first draft of the paper. 
The authors are members of the {\it Gruppo Nazionale per
l'Analisi Matematica, la Probabilit\`a e le loro Applicazioni} (GNAMPA)
of the {\it Istituto Nazionale di Alta Matematica} (INdAM)

 \footnotesize
% \bibliography{database_16_03}

\newcommand{\etalchar}[1]{$^{#1}$}
\def\cprime{$'$} \def\cprime{$'$}
\providecommand{\bysame}{\leavevmode\hbox to3em{\hrulefill}\thinspace}
\providecommand{\MR}{\relax\ifhmode\unskip\space\fi MR }
% \MRhref is called by the amsart/book/proc definition of \MR.
\providecommand{\MRhref}[2]{%
  \href{http://www.ams.org/mathscinet-getitem?mr=#1}{#2}
}
\providecommand{\href}[2]{#2}

%\bibliography{C:\Users\Utente\Dropbox\Matematica\Bibtex\database_16_03}
%\bibliography{/home/daniele/Dropbox/Matematica/Bibtex/database_16_03}

% \bibliographystyle{amsalpha}
\normalsize
\normalsize
\bigskip \noindent\sc \small  Annamaria Montanari, Daniele Morbidelli
\\ Dipartimento di Matematica,
Universit\`{a} di Bologna  (Italy)
\\Email: \tt   annamaria.montanari@unibo.it,
daniele.morbidelli@unibo.it

\end{document}